\newcommand{{\RR}}{\mathbb R}
\newcommand{{\PP}}{\mathbb P}
\newcommand{{\NN}}{\mathbb N}
\newcommand{{\NNN}}{\mathcal N}
\newcommand{{\UUU}}{\mathcal U}
\newcommand{{\EEE}}{\mathcal E}
\newcommand{{\II}}{\mathcal I}
\newcommand{{\DD}}{\mathcal D}
\newcommand{{\PPP}}{\underline P}
\newtheorem{Thm}{Theorem}[section]
\newtheorem{Prop}{Proposition}[section]
\newtheorem{Lemma}{Lemma}[section]
\newtheorem{Cor}{Corollary}[section]
\newtheorem{Rem}{Remark}[section]
\newtheorem{Def}{Definition}[section]
\newtheorem{Ex}{Example}[section]
\begin{document}

%% Title, authors and addresses

\title{Nearly-Linear Uncertainty Measures}

%% use the tnoteref command within \title for footnotes;
%% use the tnotetext command for the associated footnote;
%% use the fnref command within \author or \address for footnotes;
%% use the fntext command for the associated footnote;
%% use the corref command within \author for corresponding author footnotes;
%% use the cortext command for the associated footnote;
%% use the ead command for the email address,
%% and the form \ead[url] for the home page:
%%
%% \title{Title\tnoteref{label1}}
%% \tnotetext[label1]{}
%% \author{Name\corref{cor1}\fnref{label2}}
%% \ead{email address}
%% \ead[url]{home page}
%% \fntext[label2]{}
%% \cortext[cor1]{}
%% \address{Address\fnref{label3}}
%% \fntext[label3]{}

%% use optional labels to link authors explicitly to addresses:
%% \author[label1,label2]{<author name>}
%% \address[label1]{<address>}
%% \address[label2]{<address>}

%\author{John Smith}

%\address{California, United States}

\author[1]{Chiara Corsato\thanks{ccorsato@units.it}}
\author[1]{Renato Pelessoni\thanks{renato.pelessoni@econ.units.it}}
\author[1]{Paolo Vicig\thanks{paolo.vicig@econ.units.it}}
\affil[1]{DEAMS ``B. de Finetti''\\
	University of Trieste\\
	Piazzale Europa~1\\
	I-34127 Trieste\\
	Italy}

\renewcommand\Authands{ and }

\maketitle

\begin{abstract}
Several easy to understand and computationally tractable imprecise probability models, like the Pari-Mutuel model, are derived from a given probability measure $P_0$. In this paper we investigate a family of such models, called Nearly-Linear (NL). They generalise a number of well-known models, while preserving a simple mathematical structure. In fact, they are linear affine transformations of $P_0$ as long as the transformation returns a value in $[0,1]$. We study the properties of NL measures that are (at least) capacities, and show that they can be partitioned into three major subfamilies. We investigate their consistency, which ranges from 2-coherence, the minimal condition satisfied by all, to coherence, %(2-monotonicity/2-alternation?), 
and the kind of beliefs they can represent. There is a variety of different situations that NL models can incorporate, from generalisations of the Pari-Mutuel model, the $\varepsilon$-contamination model and other models to conflicting attitudes of an agent towards low/high $P_0$-probability events (both prudential and imprudent at the same time), or to symmetry judgments. The consistency properties vary with the beliefs represented, but not strictly: some conflicting and partly irrational moods may be compatible with coherence. In a final part, we compare NL models with their closest, but only partly overlapping, models, neo-additive capacities and probability intervals.

\smallskip
\noindent \textbf{Keywords.}
Nearly-Linear models,
Pari-Mutuel model,
2-coherent imprecise probabilities,
coherent imprecise probabilities,
probability intervals
\end{abstract}

%%
%% Start line numbering here if you want
%%
%\linenumbers

%% main 

\section*{Acknowledgement}
*NOTICE: This is the authors' version of a work that was accepted for publication in International Journal of Approximate Reasoning. Changes resulting from the publishing process, such as peer review, editing, corrections, structural formatting, and other quality control mechanisms may not be reflected in this document. Changes may have been made to this work since it was submitted for publication. A definitive version was subsequently published in International Journal of Approximate Reasoning, 
vol. 114, November~2019, pages 1–-28
https://doi.org/10.1016/j.ijar.2019.08.001 $\copyright$ Copyright Elsevier

%https://www.sciencedirect.com/science/article/pii/S0888613X18307205.
https://doi.org/10.1016/j.ijar.2019.08.001

\vspace{0.3cm}
$\copyright$ 2019. This manuscript version is made available under the CC-BY-NC-ND 4.0 license http://creativecommons.org/licenses/by-nc-nd/4.0/

\begin{center}
	\includegraphics[width=2cm]{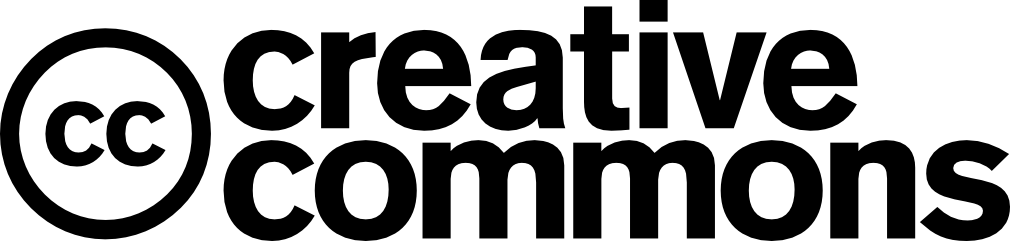}
	\includegraphics[width=2cm]{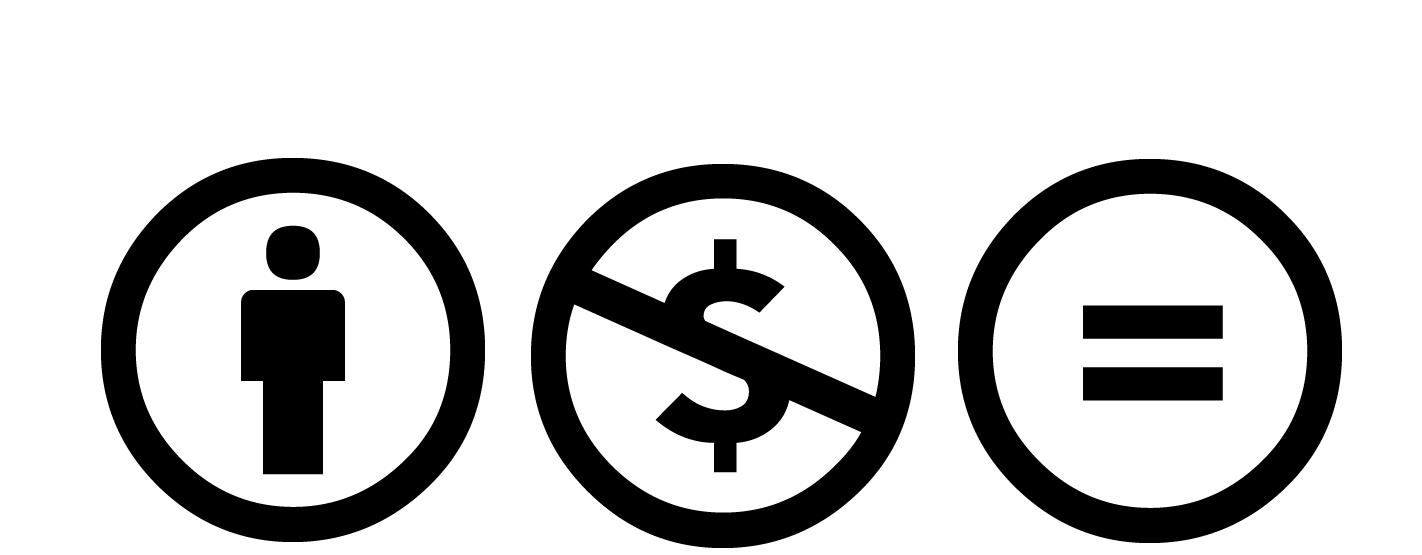}
\end{center}

\section{Introduction}
A great number of different models is nowadays available to represent uncertainty and imprecision in real-world knowledge. They range from very general ones like lower and upper previsions to special cases like probability intervals, $p$-boxes, possibility/necessity measures, and others. When it is reasonable to adopt it, the advantage of a special model is that it is a simplified and easier to understand uncertainty representation. 

In this paper, we introduce and investigate a family of such simplified models, to be called \emph{Nearly-Linear} (NL) models. Formally, NL models are \emph{neighbourhood models} (cf. \cite[Chapter 4.7]{book}), being obtained from a  given probability measure $P_0$. The probability $P_0$ may represent an assessor's first approach evaluation, or a `true' probability that has to be modified for some reason. For instance, $P_0(A)$ is not a bookmaker's realistic selling price\footnote{When the bookmaker sells event $A$, s/he then pays to the buyer 1 if $A$ occurs, 0 otherwise.} for an event $A$: being a fair price, it does not ensure a positive gain in the long run, nor does it incorporate the bookmaker's costs and commissions. Thus, in the Pari-Mutuel model, a well-known neighbourhood model born in the world of horse betting \cite{MMD18,PVZ,W}, $P_0(A)$ is replaced by the upper probability $\overline P_{\rm PMM}(A)=\min\{(1+\delta)P_0(A),1\}$, $\delta>0$, as a more credible selling price for $A$. Note however that $\overline P_{\rm PMM}(A)\downarrow 0$ as $P_0(A)\downarrow 0$, which again might not correspond to reality: a bookie may stand some fixed cost $c>0$, irrespective of how unlikely $A$ is. If $(1+\delta)P_0(A)$ is smaller than $c$, after deducting all costs $c$ the bookmaker's net gain from selling $A$ for $\overline P_{\rm PMM}(A)$ is negative, no matter whether $A$ occurs or not. This problem is solved by the NL model, generalising the Pari-Mutuel model, studied in the later Section \ref{vertical}. 

The idea behind the definition of NL models is rather simple: $\mu$ is a NL uncertainty measure if it is a linear affine transformation of $P_0$, as long as this makes sense, i.e., as long as $\mu\in[0,1]$. Thus, 
\begin{equation}
\label{mu_linear}
    \mu(A)=bP_0(A)+a, \quad \text{if }bP_0(A)+a\in[0,1].
\end{equation}
The main goals of our investigation of the NL models are:
\begin{itemize}
    \item[$(a)$] Determine the general properties of NL models and classify them into exhaustive families of submodels. 
    \item[$(b)$] Detect the consistency properties of each submodel.
    \item[$(c)$] Establish which beliefs they may elicit.
    \item[$(d)$] Compare the NL models with their closest uncertainty models.
\end{itemize}
In detail, after recalling some essential preliminary notions in Section \ref{section_prel}, item $(a)$ is tackled in Section \ref{NL_models}. After observing that requiring $b>0$ in \eqref{mu_linear} guarantees that $\mu$ (defined precisely by the later Equation \eqref{def_mu_NL}) is a capacity, we prove that NL models are closed with respect to conjugacy (Proposition \ref{conjugate}). Thus every NL model corresponds to a couple of conjugate capacities; by establishing a condition for a lower probability $\PPP$ in a NL model to be 2-coherent, Propositions \ref{prop_2coherence} and \ref{prop_sopra2coherence} let us interpret this couple as a lower and an upper probability. From these results, and taking the Pari-Mutuel model as a starting point, NL models can be partitioned into three submodels. The first, the Vertical Barrier Model (VBM) studied in Section \ref{vertical}, is always coherent and extends various well-known models, including the Pari-Mutuel model, the $\varepsilon$-contamination model, and the vacuous imprecise probability. In Section \ref{horizontal} we investigate the Horizontal Barrier Model (HBM). It is always 2-coherent, and may express an assessor's conflicting attitude (prudential towards some events, imprudent towards other ones). Perhaps surprisingly, this may be compatible with coherence too, under additional conditions. We prove in Proposition \ref{characterise_coherence} that the upper probability in this model is coherent if and only if it is subadditive, and in Section \ref{coherent_uplow_HB} we detail the rather restrictive conditions imposed by subadditivity. Some of these models may even be precise probabilities, as discussed in Section \ref{section_precise_HBM}. Section \ref{sub_RRM} studies the third NL model, the Restricted Range Model. It does not generalise the Pari-Mutuel model, may still elicit conflicting moods for an assessor, is always 2-coherent, but coherent only in marginal situations. In Section \ref{HZ} we explore items $(a)$, $(b)$ and $(c)$ for a degenerate NL model, the Hurwicz capacity corresponding to $b=0$ in \eqref{mu_linear}. Item $(d)$ is addressed in Section \ref{sec_compare}: preliminarily, we study in Section \ref{NU} how properties of the sets of events of $\mu$-measure 0 and 1 depend on the kind of consistency of $\mu$. This analysis is useful to compare, in Section \ref{NL_neo}, NL models with neo-additive capacities, introduced in \cite{CEG} in a decision theoretical framework. Although making use of \eqref{mu_linear} too, neo-additive capacities require additional (and, in our opinion, overly restrictive) constraints on the sets of $\mu$-measure 0 and 1. These constraints, among other issues, differentiate them from NL models. A parallel between NL models and probability intervals is also of interest, given that the Pari-Mutuel model is, in a \emph{finite} setting, an instance of probability interval \cite{MMD18}. This comparison is done in Section \ref{sec_intervals}. Its final results (Propositions \ref{prop_characterise_intervals}, \ref{prop_HBM_intervals}) show that a VBM is (the natural extension of) a probability interval in very special instances only, while a HBM is so (always in a finite setting) if it is coherent, which is still a rather special situation. Finally, Section \ref{sec_conclusions} concludes the paper. This work is an extended version, with proofs and additional material, of the contribution \cite{CPV18} presented at the SMPS 2018 conference. Proofs of results are gathered in the Appendix.

\section{Preliminaries}
\label{section_prel}
\subsection{Describing uncertainty}
\label{describe_uncertain}
Quite commonly, uncertainty evaluations are made after fixing a \emph{partition} (also termed space or universe of discourse) $\PP$, i.e., a set of pairwise disjoint events whose logical sum is the sure event $\Omega$. The evaluations concern the set $\mathcal A(\PP)$ of events logically dependent on $\PP$ (the powerset of $\PP$, in set-theoretic language).

In principle, however, one may well think of evaluating the events of an \emph{arbitrary} non-empty set $\mathcal D$, without requiring a priori any structure or constraint on $\mathcal D$. This is customary in de Finetti's approach to Subjective Probability Theory \cite{deF74}, and was to some extent inherited in Imprecise Probability Theory \cite{W}. In this paper we shall quite often put $\mathcal D=\mathcal A(\PP)$, but general definitions and some later development will refer to generic domains $\mathcal D$. 

The two approaches are linked as follows: given a set of events $\mathcal D=\{A_i\}_{i\in I}$, with $I$ an arbitrary (non-empty) index set, we obtain the so-called \emph{partition generated} by $\mathcal D$, $\PP_G=\{\bigwedge_{i\in I} A_i'\}$, where $A_i'$ may be replaced by either $A_i$ or its negation $\neg A_i$, in all possible ways for $i\in I$. The properties of $\PP_G$ are: 
%\begin{itemize}
%\item[$(a)$] 
$(a)$ any event in $\mathcal D$ belongs to $\mathcal A(\PP_G)$, and
%\item[$(b)$] 
$(b)$ $\PP_G$ is the coarsest partition with the property $(a)$.
%\end{itemize}

When computing $\PP_G$, several events $\bigwedge_{i\in I}A_i'$ may be impossible ($\bigwedge_{i\in I} A_i'=\emptyset$), depending on the relationships among the events in $\mathcal D$. Clearly, what interests is the partition $\PP_G\setminus\{\emptyset\}$. More generally, we assume that the partitions in the sequel are all made of non-impossible events or \emph{atoms}, i.e., $\omega\in\PP$ implies also $\omega\neq \emptyset$.

\subsection{Measuring uncertainty}
In this paper, we shall encounter several kinds of uncertainty measures, starting with capacities which, given a partition $\PP$, are commonly defined on the set $\mathcal A(\PP)$.

\begin{Def}
\label{def_capac}
An uncertainty measure $\mu:\mathcal A(\PP)\to\RR$ is a normalised capacity, or simply a \emph{capacity}, if $\mu(\emptyset)=0$, $\mu(\Omega)=1$ (normalisation), and for any $A,B\in\mathcal A(\PP)$ such that $A\Rightarrow B$, it holds that $\mu(A)\le \mu(B)$ (monotonicity).
\end{Def}
 Capacities are quite common in several kinds of applications (see, e.g., \cite{G16}). A capacity is sometimes named \emph{fuzzy measure}, although this term is often reserved for capacities with additional continuity or at least semicontinuity properties, which we do not require here.
 
The properties of capacities as uncertainty measures are really minimal. At the other extreme, we find (precise) probabilities. They may be thought of as special imprecise probability assessments where the lower probability $\PPP:\DD\to\RR$ and the upper probability $\overline P:\DD\to\RR$ coincide for any event in $\DD$: $\PPP=\overline P=P$, and satisfy some consistency conditions (see Lemma \ref{zero} later on) \cite{W}. As for upper and lower probabilities, in a first approach, common in the literature (see e.g. \cite[p. 61]{W}), they are formally both maps from $\mathcal D$ into $\RR$. However, what matters to let them represent reasonable assessments is which consistency criterion they satisfy, and here the definitions, within the same criterion, are different for lower or for upper probabilities. See e.g. the later Definition \ref{def_all} $(a)$ and Definition \ref{def_upper_coherence} in the case of coherence.   

It is also customary to assume \emph{conjugacy} of $\PPP$ and $\overline P$, which amounts to the identity
\begin{equation}
\label{conju}
\overline P(\neg A)=1-\PPP(A), \quad \forall A\in\mathcal \DD%A(\PP)
\end{equation}
and lets us refer to lower or alternatively upper probabilities only (if $\PPP$ is defined on $\DD$, its conjugate $\overline P$ is defined on $\{\neg A:A\in\DD\}$).

Lower/upper probabilities may satisfy consistency criteria of different strength, and whose properties deviate by various degrees from those of precise probabilities. We group here the ones concerning lower probabilities recalled in the sequel:
\begin{Def}\cite{ PV03, PV16, W}
	\label{def_all}
	Let $\PPP:\DD\to\RR$ be given, and $\NN$ be the set of natural numbers (including 0).
	\begin{itemize}
		\item[$(a)$] $\PPP$ is a \emph{coherent lower probability} on $\DD$ iff, $\forall n\in\NN$, $\forall s_i\ge 0$, $\forall A_i\in\DD$, $i=0,1,\dots,n$, defining
		$$
		\underline G=\sum_{i=1}^n s_i\big(I_{A_i}-\PPP(A_i)\big) - s_0\big(I_{A_0}-\PPP(A_0)\big),
		$$
		it holds that $\max \underline G\ge 0$.
		\item[$(b)$] $\PPP$ is a \emph{convex lower probability} on $\DD$ iff $(a)$ holds with the additional convexity constraint $\sum_{i=1}^ns_i=s_0=1$.
		
		$\PPP$ is centered convex or \emph{C-convex} iff it is convex, $\emptyset\in\DD$ and $\PPP(\emptyset)=0$.
		\item[$(c)$] $\PPP$ \emph{avoids sure loss} on $\DD$ iff $(a)$ holds with $s_0=0$.
		\item[$(d)$] $\PPP$ is \emph{2-coherent} on $\DD$ iff, $\forall s_1\ge 0$, $\forall s_0\in\RR$, $\forall A_0,A_1\in\DD$, defining
		$\underline G_2=s_1\big(I_{A_1}-\PPP(A_1)\big) - s_0\big(I_{A_0}-\PPP(A_0)\big)$, it holds that $\max \underline G_2\ge 0$.
	\end{itemize}
\end{Def}
Although Definition \ref{def_all} is axiomatical (and as such would not require further explanations), it implements the customary interpretation of a lower probability $\PPP(A)$ as an assessor's supremum buying price for $A$ (or for its indicator $I_A$) \cite{book,TdC,W}, while an upper probability $\overline P(A)$ is viewed as an infimum selling price for $A$.\footnote{The assessor is also termed subject, agent, or bettor in the literature. We shall use these terms as synonyms, and with reference to the betting interpretation recalled here.} In fact, a subject paying $\underline P(A)$ for $A$ achieves a(n elementary) random gain $I_A-\PPP(A)$. The consistency notions in Definition~\ref{def_all} require that no finite linear combination of elementary gains, with certain requirements on the coefficients or stakes $s_i$ that vary with the specific notion $(a)$, $(b)$, $(c)$ or $(d)$, is negative, meaning that the overall gain ($\underline G$ or $\underline G_2$) cannot result in a sure loss for the assessor/bettor.

In this interpretation, Definition \ref{def_all} operates a \emph{betting scheme} with variants as for the allowed stakes. We emphasise that the betting scheme is primarily a(n abstract) device for eliciting and graduating an assessor's uncertainty evaluation.

Coherence, a fundamental notion in the approach of \cite{W}, is the strongest among these consistency notions and implies all the other ones. 
The corresponding consistency concepts for upper probabilities may be derived from the definitions above and conjugacy \eqref{conju}. We recall explicitly coherence:
\begin{Def}
\label{def_upper_coherence}
$\overline P:\DD\to\RR$ is a \emph{coherent upper probability} on $\DD$ iff, $\forall n\in\NN$, $\forall s_i\ge 0$, $\forall A_i\in\DD$, $i=0,1,\dots,n$, defining
$$
\overline G=\sum_{i=1}^n s_i\big(\overline P(A_i) - I_{A_i}\big) - s_0\big(\overline P(A_0) - I_{A_0}\big),
$$
it holds that $\max \overline G\ge 0$.
\end{Def}
 Again, Definition \ref{def_upper_coherence} requires that the gain $\overline G$ from any combination of bets at $\overline P$-prices, with non-negative coefficients but for at most one, is not always negative. 

Some necessary conditions for coherence are (whenever the relevant lower/\\upper probabilities are defined) \cite[Section 2.7.4]{W}: %\textcolor{red}{for all $A,B\in\DD$ such that $A\vee B\in\DD$,}
\begin{align}
\label{subadd}
\overline P(A\vee B) & \le \overline P(A) + \overline P(B) & \text{(\emph{subadditivity})},\\
\label{superadd}
\PPP(A\vee B) & \ge \PPP(A) + \PPP(B), \quad \text{if }A\wedge B=\emptyset & \text{(\emph{superadditivity})},\\
\label{quasi_superadd}
1 + \PPP(A\wedge B) & \ge \PPP(A) + \PPP(B). &
\end{align}

The definitions of coherence for lower and upper probabilities both weaken de Finetti's coherence definition for (precise) probabilities \cite{deF74}, also called dF-coherence:

\begin{Def}
	\label{def_nuova}
	Let $P:\mathcal D\to \RR$ be given. $P$ is a (precise) \emph{probability} \emph{dF-coherent} on $\mathcal D$ iff, $\forall n\in\NN$, $\forall s_i\in\RR$, $\forall A_i\in\mathcal D$, $i=1,\dots,n$, defining
	$$
	G=\sum_{i=1}^n s_i\big(I_{A_i} - P(A_i)\big),
	$$
	it holds that $\max G\ge 0$.
\end{Def}
Note that also dF-coherent probabilities are defined on an \emph{arbitrary} set of events $\mathcal D$. In this paper, we shall use the term (precise) probability to mean dF-coherent probability. 
It is possible to characterise (precise) probabilities as follows \cite[Section 2.8.8]{W}.

\begin{Lemma}
\label{zero}
$P$ is a probability on a set $\mathcal{D}$ closed under negation ($A\in\mathcal{D}\Rightarrow \neg A\in \mathcal{D}$) iff $P$ avoids sure loss (as a lower probability) and $P(A)+P(\neg A)=1$, $\forall A\in \mathcal{D}$. 
\end{Lemma}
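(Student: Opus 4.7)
The plan is to prove the two implications separately, with the forward (``only if'') direction being almost immediate from Definition~\ref{def_nuova}, while the reverse (``if'') direction requires converting an arbitrary dF-coherence gain into one with non-negative stakes using closure under negation and the self-conjugacy $P(A)+P(\neg A)=1$.

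For the ``only if'' direction, suppose $P$ is dF-coherent. Avoiding sure loss is a special case of dF-coherence: dF-coherence allows arbitrary real stakes $s_i$, while avoiding sure loss restricts to $s_i\ge 0$ (with $s_0=0$), so the implication is immediate. To obtain $P(A)+P(\neg A)=1$ for any $A\in\mathcal{D}$, I would apply Definition~\ref{def_nuova} twice with $n=2$, $A_1=A$, $A_2=\neg A$. Taking $s_1=s_2=1$ gives the constant gain $G = I_A+I_{\neg A}-P(A)-P(\neg A)=1-P(A)-P(\neg A)$, so $\max G\ge 0$ forces $P(A)+P(\neg A)\le 1$; taking $s_1=s_2=-1$ yields the opposite inequality.

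For the ``if'' direction, assume $P$ avoids sure loss and is self-conjugate on $\mathcal{D}$. Given an arbitrary dF-coherence gain $G=\sum_{i=1}^n s_i(I_{A_i}-P(A_i))$ with $s_i\in\RR$, I would show that $G$ can be rewritten as an avoid-sure-loss gain with non-negative stakes on events still in $\mathcal{D}$. The key identity is that, using $I_{\neg A}=1-I_A$ and the assumed $P(\neg A)=1-P(A)$,
\begin{equation*}
s_i\bigl(I_{A_i}-P(A_i)\bigr) = -s_i\bigl(I_{\neg A_i}-P(\neg A_i)\bigr).
\end{equation*}
Hence for each index $i$ with $s_i<0$, I replace $A_i$ by $\neg A_i\in\mathcal{D}$ (invoking closure under negation) and $s_i$ by $|s_i|>0$, leaving the term pointwise unchanged. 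The resulting rewritten expression $\tilde G$ is pointwise equal to $G$ but fits the format of the avoid-sure-loss gain (with all stakes non-negative and $s_0=0$), so $\max G=\max\tilde G\ge 0$.

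The main obstacle is really only the reverse direction, and within it, the technical step of recognising that the constant terms cancel exactly when one uses $P(A)+P(\neg A)=1$: without self-conjugacy the substitution would produce a residual constant that could spoil the non-negativity of the maximum. This explains why both hypotheses (self-conjugacy \emph{and} closure of $\mathcal{D}$ under negation) are indispensable in the reverse direction.
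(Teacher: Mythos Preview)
Your proof is correct in both directions. Note, however, that the paper does not actually prove this lemma: it is quoted as a known characterisation from Walley's book (\cite[Section 2.8.8]{W}), so there is no ``paper's own proof'' to compare against. Your argument is the standard direct one and would be exactly what a reader reconstructing the result from the definitions would expect.
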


\begin{Rem}
\label{rem_zero}
We recall for later applications of this lemma that a coherent lower/upper probability also avoids sure loss, and that if $\PPP=\overline P=P$ and $\PPP,\overline P$ are conjugate, then necessarily $P(A)+P(\neg A)=1$, $\forall A\in\mathcal{D}$. 
\end{Rem}

%Coherent lower/upper probabilities incorporate a number of relevant special cases. In %particular,
A lower probability $\PPP$ is \emph{2-monotone} on $\mathcal A(\PP)$ if 
\begin{equation}
\label{2monot}
\PPP(A\vee B)\ge \PPP(A) + \PPP(B) - \PPP(A\wedge B), \quad \forall A,B\in\mathcal A(\PP),
\end{equation}
while an upper probability $\overline P$ is \emph{2-alternating} on $\mathcal A(\PP)$ if the reverse inequality holds:
\begin{equation}
\label{2altern}
\overline P(A\vee B)\le \overline P(A) + \overline P(B) - \overline P(A\wedge B), \quad \forall A,B\in\mathcal A(\PP).
\end{equation}
2-monotonicity is not implied by coherence: if $\PPP$ is coherent it need not satisfy \eqref{2monot} but only the weaker condition \eqref{quasi_superadd}. Conversely, 2-monotonicity of $\PPP$ implies its coherence if further $\PPP(\emptyset)=0$, $\PPP(\Omega)=1$ (otherwise it may not: $\PPP(A)=c\in\RR, \,\forall A\in\mathcal A(\PP)$, is 2-monotone but not coherent) \cite[Corollary 6.16]{TdC}. In the theory of Imprecise Probabilities, the importance of 2-monotonicity is essentially computational \cite[Section 4.3]{book}: it simplifies an important inferential procedure termed natural extension, allowing its computation by means of the Choquet integral \cite[Theorem 6.14]{TdC}. Further, in a finite environment, 2-monotonicity reduces the search of the vertexes of the set of all precise probabilities dominating $\PPP$ (the credal set) to the simpler task of finding the so-called Weber set, see e.g. \cite[Section 5.3]{MMV18}. A behavioural interpretation of 2-monotonicity is available for gambles: it is related to comonotone additivity \cite[Theorem 6.22]{TdC}, an important property in many fields, including risk measurement \cite[Section 2.2.3.6]{DDGK05}.

It can be easily seen that, given $\PPP$ and $\overline P$ that are conjugate, $\overline P$ has the same degree of consistency as $\PPP$: it is coherent (2-coherent, $\dots$) iff $\PPP$ is so, and satisfies \eqref{2altern} iff $\PPP$ satisfies \eqref{2monot}.

Among the coherent models, the Pari-Mutuel model will play a basic role in the sequel. It was originally devised for betting with horse racing, and studied in the framework of imprecise probabilities by \cite{MMD18,  PVZ,W}, among others.
\begin{Def}
\label{def_PMM}
$\underline{P}_{\rm PMM}:\mathcal A(\PP)\to\RR$ is a \emph{Pari-Mutuel lower probability} if \mbox{$\underline{P}_{\rm PMM}(A)=\max\{(1+\delta)P_0(A) - \delta,0\},\, \forall A\in\ \mathcal A(\PP)$,} where $P_0$ is a given probability and $\delta\in \RR^+$. Its conjugate upper probability is $\overline{P}_{\rm PMM}(A)=\min\{(1+\delta)P_0(A),1\}$. $(\underline{P}_{\rm PMM},\overline{P}_{\rm PMM})$ constitute a \emph{Pari-Mutuel Model} (PMM).
\end{Def} 
In the PMM, $\PPP_{\rm PMM}$ is 2-monotone, $\overline P_{\rm PMM}$ is 2-alternating. As for $P_0$, we may think that it is a known or `true' probability, but that does not express a subject's own buying/selling prices. And in fact, in real-world situations a bookie does not sell $A$ for what s/he believes is its \emph{fair} price $P_0(A)$, but for a higher amount $\overline P(A)$. This is indeed necessary, at least to cover the bookie's costs, and further to guarantee a profit in the long run.

Some of the concepts we have recalled can be approached alternatively, by means of Envelope Theorems \cite{PV03,W}. 

\begin{Thm}[Envelope Theorems]
\label{env_thm}
\hspace{1pt}
\begin{itemize}
\item[$(a)$] $\PPP:\DD\to\RR$ is coherent on $\DD$ iff there is a non-empty set  $\mathcal M$ of precise probabilities such that
$$
\PPP(A)=\min\{P(A):P\in\mathcal M\}, \quad \forall A\in \DD.
$$
\item[$(b)$] $\overline P:\DD\to\RR$ is coherent on $\DD$ iff there is a non-empty set  $\mathcal M$ of precise probabilities such that
$$
\overline P(A)=\max\{P(A):P\in\mathcal M\}, \quad \forall A\in \DD.
$$
\item[$(c)$] $\PPP:\DD\to\RR$ is convex on $\DD$ iff there exist a non-empty set  $\mathcal M$ of precise probabilities and a function $\alpha:\mathcal M\to\RR$ such that
$$
\PPP(A)=\min\{P(A)+\alpha(P):P\in\mathcal M\}, \quad \forall A\in \DD.
$$
\end{itemize}
%\label{env_thm}
\end{Thm}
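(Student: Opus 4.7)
I would prove the three parts in parallel, since parts $(a)$ and $(b)$ are tied by conjugacy and $(c)$ extends the technique. In each case the easy direction is sufficiency: suppose $\underline P(A)=\min\{P(A):P\in\mathcal M\}$ for some class $\mathcal M$ of probabilities. Given any $s_i\ge 0$, $A_i\in\DD$, pick $P^\ast\in\mathcal M$ (or a near-minimiser, if the minimum is not attained) such that $P^\ast(A_0)$ is (close to) $\underline P(A_0)$. Taking the $P^\ast$-expectation of
\[
\underline G=\sum_{i=1}^n s_i(I_{A_i}-\underline P(A_i))-s_0(I_{A_0}-\underline P(A_0)),
\]
every term $s_i(P^\ast(A_i)-\underline P(A_i))$ with $i\ge 1$ is non-negative, while $s_0(P^\ast(A_0)-\underline P(A_0))\le 0$, so $E_{P^\ast}[\underline G]\ge 0$, whence $\max\underline G\ge 0$. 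This proves coherence of $\underline P$. Part $(b)$ follows by the same computation applied to $\overline G$, or alternatively by applying $(a)$ to the conjugate $\underline P(A)=1-\overline P(\neg A)$. For $(c)$, the argument is identical but with the shifted gain: the expectation picks up a $-\alpha(P^\ast)$ term, which is precisely cancelled by enforcing $s_0=\sum_{i\ge 1}s_i=1$, the extra convexity constraint in Definition \ref{def_all}$(b)$.

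The substantive direction is necessity. For $(a)$, the plan is a dominating-probability argument. Fix an arbitrary $A^\ast\in\DD$; I would build $P\in\mathcal M$ with $P\ge\underline P$ pointwise on $\DD$ and $P(A^\ast)=\underline P(A^\ast)$. The natural route is to first extend $\underline P$ to its natural extension $\underline E$ on the set $\mathcal L(\Omega)$ of all bounded gambles, which remains coherent; then consider the linear functional on the one-dimensional subspace spanned by $I_{A^\ast}$ defined by $\lambda I_{A^\ast}\mapsto \lambda\underline E(A^\ast)$, and extend it by Hahn–Banach to $\mathcal L(\Omega)$ dominated by $\underline E$ from below and by the sublinear functional $\overline E$ (the natural extension of the conjugate upper probability) from above. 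The resulting linear functional is positive, normalised, hence a probability $P$, and satisfies $P\ge\underline P$ on $\DD$ with $P(A^\ast)=\underline P(A^\ast)$. Varying $A^\ast$ over $\DD$ gives a family $\mathcal M$ whose lower envelope coincides with $\underline P$.

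Part $(b)$ is obtained from $(a)$ by conjugacy: if $\overline P$ is coherent, then $\underline P(A)=1-\overline P(\neg A)$ is coherent on $\{\neg A:A\in\DD\}$; the dominating set $\mathcal M$ produced for $\underline P$ automatically satisfies $\overline P(A)=\max\{P(A):P\in\mathcal M\}$. For part $(c)$, I would mimic $(a)$ but exploit only the convex betting scheme. The correct adaptation is to associate to each $A^\ast$ a probability $P_{A^\ast}$ and a scalar $\alpha(P_{A^\ast})$ with $P_{A^\ast}(A)+\alpha(P_{A^\ast})\ge\underline P(A)$ for all $A\in\DD$ and equality at $A^\ast$. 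Here the constraint $s_0=\sum_{i\ge 1}s_i=1$ licenses an additive constant in the separation argument (Hahn–Banach applied to an affine rather than linear functional), which is exactly the $\alpha(P)$ shift.

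The main obstacle is the Hahn–Banach step in the necessity direction: verifying that the separation produces an honest \emph{probability} (positive, normalised, finitely additive) rather than just a dominated linear functional. This uses that coherence implies $\underline E$ is superlinear and $\overline E$ is sublinear with $\underline E\le\overline E$, $\underline E(\Omega)=\overline E(\Omega)=1$, and $\underline E(\lambda)=\overline E(\lambda)=\lambda$ for constants $\lambda$; positivity of the extended functional then follows from $\underline E(X)\ge 0$ whenever $X\ge 0$. The analogous verification for $(c)$, where the tightening to affine instead of linear functionals must be handled carefully, is the only place where bookkeeping differs in a non-routine way.
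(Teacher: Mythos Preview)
The paper does not give its own proof of Theorem~\ref{env_thm}: it is stated in Section~\ref{section_prel} as a preliminary result, with a citation to \cite{PV03,W}, and no proof appears in the Appendix. So there is no in-paper proof to compare against.

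That said, your sketch follows the standard route taken in those references. The sufficiency direction for $(a)$ is the usual expectation argument: pick $P^\ast\in\mathcal M$ attaining the minimum at $A_0$, take $E_{P^\ast}[\underline G]$, and conclude $\max\underline G\ge 0$ (a minor quibble: since $P^\ast(A_0)=\underline P(A_0)$ exactly, the $s_0$-term has zero expectation rather than merely non-positive, so there is no need for ``near-minimisers''---the statement uses $\min$, not $\inf$). The necessity direction via natural extension plus a Hahn--Banach separation is exactly Walley's construction of the credal set and the proof that each $\underline P(A^\ast)$ is attained by some dominating $P$. Your handling of $(b)$ by conjugacy and of $(c)$ by passing from linear to affine separation (the additive constant $\alpha(P)$ absorbed by the constraint $\sum s_i=s_0=1$) matches the argument in \cite{PV03}.

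One small point to tighten: in the Hahn--Banach step you start from the one-dimensional span of $I_{A^\ast}$, but to force normalisation $P(\Omega)=1$ you should include the constants in the initial subspace (i.e., start from $\mathrm{span}\{1,I_{A^\ast}\}$), or equivalently invoke the sandwich form of Hahn--Banach with the superlinear $\underline E$ below and sublinear $\overline E$ above, both agreeing on constants. You gesture at this in your final paragraph, so the idea is there; just make the initial subspace explicit.
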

Envelope theorems justify a further interpretation of coherent lower probabilities, historically arising from statistical robustness arguments \cite{H81}: if we are uncertain about which is the `true' probability in a set $\mathcal M$, we may prudentially obtain an evaluation $\PPP$ by taking the lower envelope in $\mathcal M$. The probabilities in $\mathcal M$ may also be given by different experts. In the case of convex probabilities, the function $\alpha(P)$ lets us modify/correct the opinion of any single expert.\footnote{A further important motivation for introducing convex probabilities, or previsions when referring to gambles, is their one-to-one correspondence with convex risk measures, see \cite{FS02, PV03} for more information.}

As for 2-coherent lower probabilities, the following characterisation, proven in \cite[Proposition 4]{PVMM16}, will turn to be useful later on. 
\begin{Prop}
\label{prop_character_2coherence}
Let $\PPP:\DD\to\RR^+\cup\{0\}$, with $\DD$ negation-invariant ($A\in\DD\Rightarrow \neg A\in\DD$). Then $\PPP$ is 2-coherent iff it satisfies the following:
\begin{itemize}
\item[$(i)$] $\forall A,B\in\DD,$ $A\Rightarrow B$ implies $\PPP(A)\le \PPP(B)$,
\item[$(ii)$] $\forall A\in\DD$, $\PPP(A) + \PPP(\neg A)\le 1$,
\item[$(iii)$] if $\emptyset \in\DD$, $\PPP(\emptyset)=0$, $\PPP(\Omega)=1$.
\end{itemize}
\end{Prop}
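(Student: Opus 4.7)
I would prove the equivalence in two directions, exploiting the fact that $\underline G_2$ takes at most four distinct values, one on each of the atoms $A_0\wedge A_1$, $A_0\wedge\neg A_1$, $\neg A_0\wedge A_1$, $\neg A_0\wedge\neg A_1$ (any of which may be empty).

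For the necessity part, each of (i)--(iii) follows by a judicious choice of stakes and events in the 2-coherence inequality. For monotonicity, given $A\Rightarrow B$ I would take $A_1=A$, $A_0=B$, $s_1=s_0=1$: since $A\Rightarrow B$ forces $\max(I_A-I_B)=0$, the 2-coherence condition $\max \underline G_2\ge 0$ collapses to $\PPP(B)-\PPP(A)\ge 0$. For (ii), the choice $A_1=A$, $A_0=\neg A$, $s_1=1$, $s_0=-1$ makes $\underline G_2$ the constant $1-\PPP(A)-\PPP(\neg A)$. For (iii), $\PPP(\emptyset)\le 0$ follows from $A_1=\emptyset$, $s_1=1$, $s_0=0$ (combined with the assumption $\PPP\ge 0$), while $\PPP(\Omega)=1$ follows from $A_0=\Omega$ (available since $\DD$ is negation-invariant) with $s_1=0$ and $s_0=\pm 1$, yielding both inequalities $\PPP(\Omega)\le 1$ and $\PPP(\Omega)\ge 1$.

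For sufficiency, assume (i)--(iii) and fix arbitrary $s_1\ge 0$, $s_0\in\RR$ and $A_0,A_1\in\DD$. I split on the sign of $s_0$. When $s_0\ge 0$, the value of $\underline G_2$ on $\neg A_0\wedge A_1$ is $s_1(1-\PPP(A_1))+s_0\PPP(A_0)$, which is non-negative using (i) and (iii) (which give $\PPP(A_1)\le \PPP(\Omega)=1$) together with $\PPP\ge 0$; if instead $\neg A_0\wedge A_1=\emptyset$ (so $A_1\Rightarrow A_0$), I compare the three non-empty atom values and observe that their differences are $s_1$ and $s_1-s_0$, so the maximum is either $s_1(1-\PPP(A_1))-s_0(1-\PPP(A_0))$ (when $s_1\ge s_0$, non-negative by monotonicity $\PPP(A_1)\le \PPP(A_0)$) or $-s_1\PPP(A_1)+s_0\PPP(A_0)$ (when $s_0>s_1$, non-negative again by monotonicity).

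When $s_0<0$, write $s_0=-r$ with $r>0$; then the value on $A_0\wedge A_1$ is $s_1(1-\PPP(A_1))+r(1-\PPP(A_0))\ge 0$, settling the case as long as that atom is non-empty. The remaining and most delicate situation is $A_0\wedge A_1=\emptyset$: here monotonicity combined with (ii) yields $\PPP(A_0)+\PPP(A_1)\le \PPP(A_0)+\PPP(\neg A_0)\le 1$ (since $A_1\Rightarrow \neg A_0$). Then, arguing by contradiction, if both atom values $Q_1=s_1(1-\PPP(A_1))-r\PPP(A_0)$ and $Q_2=r(1-\PPP(A_0))-s_1\PPP(A_1)$ were strictly negative, multiplying the two strict inequalities (the degenerate cases $s_1=0$ or $r=0$ are immediate) would yield $(1-\PPP(A_0))(1-\PPP(A_1))<\PPP(A_0)\PPP(A_1)$, which after expansion gives $\PPP(A_0)+\PPP(A_1)>1$, contradicting the bound just established. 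I expect this last, disjoint-events subcase to be the main obstacle, because it is the only step where properties (i) and (ii) must genuinely be combined and where the algebraic contradiction is needed rather than a direct sign argument.
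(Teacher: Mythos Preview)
The paper does not give its own proof of this proposition: it quotes the result from \cite{PVMM16} (Proposition~4 there) and uses it as a tool. So there is no ``paper's proof'' to compare against; what matters is whether your argument stands on its own.

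Your argument is essentially correct, and the overall strategy (split on the sign of $s_0$, locate a favourable atom, and in the disjoint subcase use the product inequality to force $\PPP(A_0)+\PPP(A_1)>1$) is clean and works. Two small points deserve attention.

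First, your justification of $\PPP(A_1)\le 1$ via ``(i) and (iii) (which give $\PPP(A_1)\le \PPP(\Omega)=1$)'' tacitly assumes $\Omega\in\DD$, i.e.\ $\emptyset\in\DD$. The statement allows $\emptyset\notin\DD$, in which case (iii) is vacuous and $\Omega$ need not be available. The bound still holds, but for a different reason: from (ii), $\PPP(A_1)+\PPP(\neg A_1)\le 1$ together with $\PPP(\neg A_1)\ge 0$ gives $\PPP(A_1)\le 1$ directly. You should invoke (ii) here rather than (iii).

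Second, in the necessity of (i) you write ``$A\Rightarrow B$ forces $\max(I_A-I_B)=0$''. This fails in the degenerate pair $A=\emptyset$, $B=\Omega$, where $I_A-I_B\equiv -1$. That case is of course covered by (iii) once $\emptyset\in\DD$, but you should flag it (or simply exclude it and defer to the separate derivation of (iii)). Similarly, in the sufficiency half several of your ``favourable atoms'' (e.g.\ $A_0\wedge A_1$, or $A_1$ and $A_0$ in the disjoint subcase) could themselves be empty; the residual situations are all immediate using (iii), but a sentence acknowledging them would make the proof watertight.
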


\begin{Rem}
	\label{nuovo}
	If $\mathcal D=\mathcal A(\PP)$, by Proposition \ref{prop_character_2coherence} a $2$-coherent $\PPP$ is a capacity with the additional property $(ii)$. Using $(ii)$ and \eqref{conju}, the conjugate $\overline P$ is such that
	$$
	\overline P(A)=1-\PPP(\neg A)\ge \PPP(A),\quad \forall A\in\mathcal A(\PP).
	$$ 
	In other words, 2-coherence is a minimal consistency concept that guarantees the natural inequality $\overline P\ge \PPP$ for conjugate $\PPP,\overline P$.
	
	Formally, 2-coherence is a special case of $n$-coherence, defined in \cite[Appendix B]{W}. Its properties are studied, also in a desirability approach, in the more general framework of conditional gambles in \cite{PV16}.
\end{Rem}

\section{Nearly-Linear Models}
\label{NL_models}

In this section Nearly-Linear (NL) models are defined, and their basic properties are established.

Let for this $\mu:\mathcal A(\mathbb P)\to \mathbb R$ be either a lower or an upper probability.

\begin{Def}
\label{NL_IP}
$\mu:\mathcal{A}(\PP)\to\RR$ is a \emph{Nearly-Linear (NL)} imprecise probability iff $\mu(\emptyset)=0$, $\mu(\Omega)=1$ and, given a probability $P_0$ on $\mathcal{A}(\PP)$, $a\in\RR$, $b> 0$, $\forall A\in\mathcal{A}(\PP)\setminus\{\emptyset,\Omega\}$, 
\begin{equation} 
\label{def_mu_NL}
\mu(A)\stackrel{\rm def}{=}\min\{\max\{bP_0(A) + a,0\},1\}\big(=\max\{\min\{bP_0(A) + a,1\},0\}\big).
\end{equation}
\end{Def}

Thus, a NL imprecise probability is a linear affine transformation with barriers of the probability $P_0$. The barriers prevent the transformation from taking values outside the interval $[0,1]$.

\begin{Lemma}
\label{lemma_capacity}
A NL $\mu$ is a capacity.
\end{Lemma}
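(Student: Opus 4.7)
The plan is to verify the three defining properties of a capacity from Definition \ref{def_capac}. Normalisation, namely $\mu(\emptyset)=0$ and $\mu(\Omega)=1$, is explicitly postulated in Definition \ref{NL_IP}, so the only real content is monotonicity: $A\Rightarrow B$ implies $\mu(A)\le\mu(B)$.

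I would split the verification of monotonicity into cases according to whether $A$ or $B$ equals $\emptyset$ or $\Omega$. If $A=\emptyset$, then $\mu(A)=0$, and since for every event $C$ the definition gives $\mu(C)\ge 0$ (because of the outer $\max\{\,\cdot\,,0\}$ or equivalently because the value lies in $[0,1]$), monotonicity is immediate. Dually, if $B=\Omega$ then $\mu(B)=1\ge\mu(A)$ since $\mu(A)\le 1$. This disposes of all boundary cases.

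The core case is $A,B\in\mathcal A(\PP)\setminus\{\emptyset,\Omega\}$ with $A\Rightarrow B$. Here I would invoke two facts: first, since $P_0$ is a (precise) probability it is monotone, so $P_0(A)\le P_0(B)$; second, since $b>0$, the affine map $x\mapsto bx+a$ is strictly increasing, giving $bP_0(A)+a\le bP_0(B)+a$. Finally, the truncation $t\mapsto \min\{\max\{t,0\},1\}$ appearing in \eqref{def_mu_NL} is non-decreasing as a composition of non-decreasing functions, and applying it to both sides yields $\mu(A)\le\mu(B)$.

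There is no real obstacle here; the only thing to be careful about is that the formula \eqref{def_mu_NL} is not used at $\emptyset$ and $\Omega$ (where $\mu$ is set by fiat), so the boundary cases must be handled separately rather than by directly appealing to monotonicity of $x\mapsto \min\{\max\{bx+a,0\},1\}$. The assumption $b>0$ is essential: if $b<0$ the affine map would reverse the inequality and monotonicity of $\mu$ would fail, which is why Definition \ref{NL_IP} imposes it.
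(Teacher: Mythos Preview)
Your proof is correct and essentially the same as the paper's. The only cosmetic difference is the case split: you separate out $A=\emptyset$ and $B=\Omega$ explicitly, whereas the paper splits according to whether $\mu(A)=0$ or $\mu(A)>0$ (the latter forcing $bP_0(A)+a>0$, so the outer $\max$ drops and one compares $\min\{bP_0(A)+a,1\}\le\min\{bP_0(B)+a,1\}$ directly); both routes amount to the same monotone-affine-plus-truncation argument.
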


\begin{Rem}
\label{b_neg}
The case $b<0$ is ruled out from Definition \ref{NL_IP} to ensure monotonicity of $\mu$ without further, overly restrictive requirements. In fact, if $b<0$ and $A\Rightarrow B$, it is $\mu(A)>\mu(B)$ whenever $\mu(A)=bP_0(A) + a>0$ and either $0<P_0(A)<P_0(B)$ or $P_0(A)=0<P_0(B),\, b>-\frac{a}{P_0(B)}$, but it is also possible that $\mu(A)=1>\mu(B)$, for appropriate values of $P_0,a,b$.

By contrast, $\mu$ is a capacity when $b=0$ if further $0\le a\le 1$. 
We did not include this special situation in Definition \ref{NL_IP}, but shall discuss it in Section \ref{HZ}.
\end{Rem}
If $\mu$ is given by Definition \ref{NL_IP}, we shall say shortly that $\mu$ is NL$(a,b)$.

An interesting feature of the family of NL models is that it is \emph{closed with respect to conjugacy}: if $\mu$ is NL$(a,b)$, also its conjugate $\mu^c(A)=1-\mu(\neg A)$, $\forall A\in\mathcal{A}(\PP)$, is NL$(c,b)$:

\begin{Prop}
\label{conjugate}
If $\mu$ is NL$(a,b)$, then $\mu^c$ is NL$(c,b)$, with 
\begin{equation}
\label{c}
c=1-(a+b).%, \quad d=b.
\end{equation} 
\end{Prop}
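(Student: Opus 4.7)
The plan is a direct computation using Definition \ref{NL_IP} and the fact that $P_0(\neg A)=1-P_0(A)$ for every $A$. First I would dispose of the boundary cases: since $\mu(\emptyset)=0$ and $\mu(\Omega)=1$ hold by definition, we immediately get $\mu^c(\emptyset)=1-\mu(\Omega)=0$ and $\mu^c(\Omega)=1-\mu(\emptyset)=1$, so the normalisation required for $\mu^c$ to be NL is free.

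Next, fix $A\in\mathcal A(\PP)\setminus\{\emptyset,\Omega\}$, so that $\neg A$ is also in $\mathcal A(\PP)\setminus\{\emptyset,\Omega\}$ and formula \eqref{def_mu_NL} applies to $\mu(\neg A)$. Substituting $P_0(\neg A)=1-P_0(A)$ gives
$$
\mu(\neg A)=\min\bigl\{\max\{b(1-P_0(A))+a,\,0\},\,1\bigr\}.
$$
The key manipulation is then to push the constant $1$ inside using the elementary identities $1-\min\{x,1\}=\max\{1-x,0\}$ and $1-\max\{y,0\}=\min\{1-y,1\}$. Applying these successively to $1-\mu(\neg A)$ transforms the outer min into a max and the inner max into a min, and the linear part $b(1-P_0(A))+a$ turns into $bP_0(A)+(1-a-b)$. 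Setting $c=1-(a+b)$ then yields
$$
\mu^c(A)=\max\bigl\{\min\{bP_0(A)+c,\,1\},\,0\bigr\}.
$$
By the parenthetical equivalence already recorded in \eqref{def_mu_NL}, this expression coincides with $\min\{\max\{bP_0(A)+c,0\},1\}$, which is exactly the NL$(c,b)$ formula with the same $b>0$.

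There is no real obstacle here; the only thing worth being careful about is tracking the order of the two barriers through the transformation, which is why invoking the min--max/max--min identity embedded in Definition \ref{NL_IP} at the end is convenient. Combined with the boundary check from the first step, this establishes that $\mu^c$ satisfies Definition \ref{NL_IP} with parameters $(c,b)$ and $c=1-(a+b)$, proving the proposition.
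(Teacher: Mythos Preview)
Your proof is correct and follows essentially the same route as the paper: handle the boundary events first, then for $A\notin\{\emptyset,\Omega\}$ substitute $P_0(\neg A)=1-P_0(A)$ and push the constant $1$ through the nested $\min/\max$ via the identities you name, finally invoking the $\min$--$\max$/$\max$--$\min$ equivalence built into Definition~\ref{NL_IP}. The paper's proof differs only cosmetically, writing the same manipulation as a single chain of equalities rather than stating the two identities separately.
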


By Proposition \ref{conjugate}, a NL model is made of a couple of conjugate capacities. As we shall see as a follow-up of the next propositions (see especially the comment after Proposition \ref{prop_sopra2coherence} and the conclusions in item $(i)$, Section \ref{NL_sub}), they can be interpreted as a lower and its conjugate upper probability, and $\PPP\le\overline P$. We start by preliminarily fixing the notation in the following

\begin{Def}
\label{NL_mod}
A \emph{Nearly-Linear Model} is a couple $(\PPP,\overline P)$ of conjugate lower and upper probabilities on $\mathcal A(\PP)$ where $\PPP$ (hence also $\overline P$, by Proposition \ref{conjugate}) is a NL imprecise probability, usually denoted by NL$(a,b)$, while $\overline P$ is NL$(c,b)$, with $c$ given by \eqref{c}.
\end{Def}

\begin{Ex}
\label{ex_PMM}
In the PMM, $\underline P_{\rm PMM}$ is NL$(-\delta,1+\delta)$, $\overline P_{\rm PMM}$ is NL$(0,1+\delta)$. Here $a=-\delta, \, b=1+\delta,$ and $c=0$, corresponding to $a+b=1$.
\end{Ex}

Given a couple $(\mu,\mu^c)$, Definition \ref{NL_mod} is uninformative as to which between $\mu$ and $\mu^c$ should be regarded as a lower probability. Clearly, when, say, $\mu$ includes a known model $\mu^*$ as a special case, such as for instance the lower probability of a PMM, $\mu$ will be a lower or upper probability if $\mu^*$ is so. More generally, the \emph{maximum consistency principle} may be applied: $\mu$ is a lower probability if it determines a model with a higher degree of consistency than interpreting $\mu$ as an upper probability.

Applying the maximum consistency principle makes sense because the consistency properties of a NL $\mu$ may vary, depending on the choice of the parameters $a,b$. This appears already in the following Proposition \ref{prop_2coherence}. Prior to it, we define the sets $\NNN_{\mu},\UUU_{\mu},\EEE_{\mu}$, of, respectively, \emph{null, universal, essential} events according to $\mu$ (following the terminology in \cite{CEG}).

\begin{Def}
Given an uncertainty measure $\mu$, define:
\begin{align*}
\NNN_{\mu} & = \{A\in\mathcal A(\PP): \mu(A)=0\},\\
\UUU_{\mu} & = \{A\in\mathcal A(\PP): \mu(A)=1\},\\
%\label{E}
\EEE_{\mu} & = \mathcal A(\PP)\setminus(\NNN_{\mu}\cup \UUU_{\mu}).
\end{align*}
\end{Def}

When $\mu$ is NL$(a,b)$, we can easily describe these sets in terms of $a,b$, using \eqref{def_mu_NL}:
\begin{align}
\label{N}
\NNN_{\mu} & = \big\{A\in\mathcal A(\PP): P_0(A)\le -\tfrac{a}{b}\big\}\cup \{\emptyset\},\\
\label{U}
\UUU_{\mu} & = \big\{A\in\mathcal A(\PP): P_0(A)\ge \tfrac{1-a}{b}\big\}\cup \{\Omega\},\\
\label{Eext}
\EEE_{\mu} & = \big\{A\in\mathcal A(\PP)\setminus\{\emptyset,\Omega\}: -\tfrac{a}{b}<P_0(A)< \tfrac{1-a}{b}\big\}.
\end{align}

Since a NL model typically gives extreme evaluations to a number of events whose probability $P_0$ is strictly between 0 and 1, determining $\NNN_{\mu}, \UUU_{\mu}, \EEE_{\mu}$ informs us more precisely on this aspect. We shall discuss the structure of $\NNN_{\mu}$ and $\UUU_{\mu}$ in a more general context, in Section \ref{NU}.

\begin{Prop}
\label{prop_2coherence}
Let $\PPP$ be a NL$(a,b)$ lower probability on $\mathcal A(\PP)$. Then $\PPP$ is 2-coherent if 
\begin{equation}
\label{2coherence}
b+2a\le 1.
\end{equation}
\end{Prop}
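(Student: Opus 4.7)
The plan is to invoke Proposition \ref{prop_character_2coherence}, which characterises 2-coherence of a non-negative lower probability on a negation-invariant domain via three conditions. The domain $\mathcal{A}(\PP)$ is negation-invariant, and by Definition \ref{NL_IP} we have $\PPP(A)\in[0,1]$, so in particular $\PPP\ge 0$. Condition $(iii)$ of Proposition \ref{prop_character_2coherence} holds by the explicit stipulation $\mu(\emptyset)=0$, $\mu(\Omega)=1$ in Definition \ref{NL_IP}, and $(i)$ (monotonicity) is precisely the content of Lemma \ref{lemma_capacity}. All the work therefore goes into showing the sub-additivity-on-complements inequality $(ii)$: $\PPP(A)+\PPP(\neg A)\le 1$ for every $A\in\mathcal A(\PP)$.

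Fix $A$ and set $p=P_0(A)$, so that $P_0(\neg A)=1-p$ since $P_0$ is a (precise) probability. Write $x=bp+a$ and $y=b(1-p)+a$, so that $x+y=b+2a\le 1$ by hypothesis. By \eqref{def_mu_NL}, $\PPP(A)=f(x)$ and $\PPP(\neg A)=f(y)$, where $f(t)=\max\{\min\{t,1\},0\}$ is the clipping of $t$ to $[0,1]$. The claim is reduced to the elementary implication
\[
x+y\le 1\ \Longrightarrow\ f(x)+f(y)\le 1.
\]

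I would prove this by splitting into the regions determined by the two barriers. If $x\le 0$ (or symmetrically $y\le 0$), then $f(x)=0$ and $f(y)\le 1$, so the inequality is immediate. If $x,y\in(0,1]$, then $f(x)+f(y)=x+y\le b+2a\le 1$. The only remaining possibility is $x>1$ (or symmetrically $y>1$); but then $y\le 1-x<0$ from $x+y\le 1$, which puts us back in the first case (with the roles of $x,y$ swapped), yielding $f(x)+f(y)=1+0=1$. In every case $\PPP(A)+\PPP(\neg A)\le 1$, so $(ii)$ holds and 2-coherence follows from Proposition \ref{prop_character_2coherence}.

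The argument is essentially a book-keeping exercise over the cases created by the two $[0,1]$-barriers; the mild subtlety is checking that when a clipping is active on one side it forces the other side into the lower barrier (via the hypothesis $b+2a\le 1$), so the sum can never exceed $1$.
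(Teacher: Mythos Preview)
Your proof is correct and follows essentially the same route as the paper: both reduce to Proposition~\ref{prop_character_2coherence}, dispatch $(i)$ and $(iii)$ via Lemma~\ref{lemma_capacity} and Definition~\ref{NL_IP}, and verify $(ii)$ by a case split on which barrier (if any) is active. Your partition by the values of $x=bp+a$ and $y=b(1-p)+a$ is exactly the paper's partition into $\NNN_{\PPP}$, $\UUU_{\PPP}$, $\EEE_{\PPP}$, just phrased arithmetically rather than in terms of those sets. One cosmetic omission: formula~\eqref{def_mu_NL} is stipulated only for $A\notin\{\emptyset,\Omega\}$, so your identity $\PPP(A)=f(x)$ does not literally cover $A\in\{\emptyset,\Omega\}$; but there $\PPP(A)+\PPP(\neg A)=0+1=1$ trivially, so nothing is missing in substance.
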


The results in the next lemma are related to Proposition \ref{prop_2coherence}:

\begin{Lemma}
\label{lemma_link_sottosopraNL}
Let $(\PPP,\overline P)$ be a NL model. Then %indicare i parametri del tipo "with  
\begin{itemize}
\item[$(a)$] $b+2a\le 1$ iff $b+2c\ge 1$.
\item[$(b)$] $\forall A\in \EEE_{\PPP}\cap \EEE_{\overline P}$, $\overline P(A)- \PPP(A)=1-(b+2a)$.
\end{itemize}
\end{Lemma}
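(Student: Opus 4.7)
The proof will be essentially algebraic and will rest on two results already available: Proposition \ref{conjugate}, which gives $c=1-(a+b)$, and the explicit description \eqref{Eext} of the essential events, which tells us exactly when $\mu$ sits in the linear (unclipped) regime of Definition \ref{NL_IP}.

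For part $(a)$, I would substitute $c=1-(a+b)$ directly into $b+2c$ to get
\[
b+2c=b+2\bigl(1-(a+b)\bigr)=2-2a-b.
\]
From this identity, $b+2c\ge 1$ is equivalent to $2-2a-b\ge 1$, i.e.\ to $b+2a\le 1$, which settles the equivalence. This is a one-line computation with no hidden subtlety.

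For part $(b)$, the key step is to translate the hypothesis $A\in\EEE_{\PPP}\cap\EEE_{\overline P}$ into the statement that neither the $\min$ nor the $\max$ barrier in \eqref{def_mu_NL} is active for $A$, either for $\PPP$ or for $\overline P$. Since $\PPP$ is NL$(a,b)$ and $\overline P$ is NL$(c,b)$, the description \eqref{Eext} applied to each gives $\PPP(A)=bP_0(A)+a$ and $\overline P(A)=bP_0(A)+c$ simultaneously. Subtracting cancels $bP_0(A)$ and gives
\[
\overline P(A)-\PPP(A)=c-a=(1-a-b)-a=1-(b+2a),
\]
which is the claimed formula.

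I do not foresee a real obstacle: the only point requiring a bit of care is to check that the essentiality hypothesis is genuinely enough to suppress the barriers for $\overline P$ as well as for $\PPP$, which is immediate once \eqref{Eext} is rewritten with $c$ in place of $a$. A pleasing by-product is that $(a)$ and $(b)$ mesh consistently: the quantity $1-(b+2a)$ appearing in $(b)$ is non-negative precisely under the condition of $(a)$, in line with the natural ordering $\overline P\ge\PPP$ recorded in Remark \ref{nuovo}.
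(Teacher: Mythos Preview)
Your proposal is correct and follows essentially the same approach as the paper, which records the proof as ``Immediate, using \eqref{c} for $(a)$, and \eqref{conju}, \eqref{def_mu_NL} for $(b)$.'' The only cosmetic difference is that for $(b)$ you invoke the NL$(c,b)$ formula for $\overline P$ directly via \eqref{Eext}, whereas the paper's hint routes through conjugacy \eqref{conju} to write $\overline P(A)=1-\PPP(\neg A)$; both collapse to the same subtraction.
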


\begin{proof}
	Immediate, using \eqref{c} for $(a)$, and \eqref{conju}, \eqref{def_mu_NL} for $(b)$.
\end{proof}

Recalling that the conjugate $\mu^c$ of $\mu$ has the same consistency properties of $\mu$, by Lemma \ref{lemma_link_sottosopraNL} $(a)$ we get the upper probability version of Proposition~\ref{prop_2coherence}:

\begin{Prop}
\label{prop_sopra2coherence}
Let $\overline P$ be a NL$(c,b)$ upper probability on $\mathcal A(\PP)$. Then $\overline P$ is 2-coherent if $b+2c\ge 1$. 
\end{Prop}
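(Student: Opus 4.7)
The plan is to derive this result by dualising Proposition~\ref{prop_2coherence} via conjugacy, exactly as hinted in the sentence immediately preceding the statement. Since all the ingredients have already been assembled in the excerpt, the proof will be essentially a one-line chain of implications; my job is mainly to spell out that chain.

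First I would recall that by Proposition~\ref{conjugate}, if $\overline P$ is NL$(c,b)$ on $\mathcal A(\PP)$, then its conjugate $\PPP = \overline P^{\,c}$ is NL$(a,b)$ with $a = 1-(c+b)$. Next I would invoke Lemma~\ref{lemma_link_sottosopraNL}$(a)$, which gives the equivalence $b+2a\le 1 \iff b+2c\ge 1$. So the hypothesis $b+2c\ge 1$ on $\overline P$ translates into $b+2a\le 1$ for the conjugate lower probability $\PPP$, which is precisely the sufficient condition in Proposition~\ref{prop_2coherence}. Hence $\PPP$ is 2-coherent.

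Finally, I would appeal to the observation made in the paragraph just before Definition~\ref{def_PMM}: conjugate lower and upper probabilities share the same degree of consistency, so that $\overline P$ is 2-coherent iff $\PPP$ is 2-coherent. Combining these steps gives that $\overline P$ is 2-coherent on $\mathcal A(\PP)$, as desired.

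I do not foresee any substantive obstacle: all three facts being combined are proved (or explicitly stated) earlier in the paper, and the translation of the numerical condition via Lemma~\ref{lemma_link_sottosopraNL}$(a)$ is just the identity $c=1-(a+b)$ rearranged. The only thing to be careful about is to state clearly that we are applying the conjugacy preservation of 2-coherence in both directions, so that the sufficiency of the condition transfers from $\PPP$ to $\overline P$ without any additional hypothesis.
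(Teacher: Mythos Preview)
Your proposal is correct and follows exactly the paper's own argument: the paper derives this proposition by combining Lemma~\ref{lemma_link_sottosopraNL}$(a)$ with the fact that a conjugate shares the consistency properties of the original measure, which is precisely the chain of implications you spell out. There is nothing to add.
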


\emph{Comment.} As for Lemma \ref{lemma_link_sottosopraNL} $(b)$, it tells us that the \emph{imprecision} $\overline P(A) -\PPP(A)$ of a NL model is constant on $\EEE_{\PPP}\cap \EEE_{\overline P}$. Clearly, when we speak of imprecision in these terms it is understood that $\overline P\ge \PPP$. In Lemma \ref{lemma_link_sottosopraNL} $(b)$, this is true for $A\in \EEE_{\PPP}\cap \EEE_{\overline P}$ iff \eqref{2coherence} holds. More generally, if \eqref{2coherence} holds, then $\overline P(A)\ge\PPP(A)$ for any $A\in\mathcal A(\PP)$ is implied by 2-coherence of $\PPP$, see Remark \ref{nuovo}. By contrast, $\overline P\ge \PPP$ is not guaranteed  if $(\PPP,\overline P)$ is a generic couple of conjugate measures. However, we shall see later on in the paper that all NL measures can be given an interpretation as either lower or upper probabilities that are (at least) 2-coherent. 

The equality $b+2a=1$ is a limiting situation in Proposition \ref{prop_2coherence}. The next proposition and comments provide more insight for this case.

\begin{Prop}
\label{equiv_sotto=sopra}
Let $(\PPP,\overline P)$ be a NL model (Definition \ref{NL_mod}). 
\begin{itemize}
\item[$(a)$] If $b+2a=1$, then 
$\PPP=\overline P$. 
%\begin{equation}
%\label{negazione}
%P(A)+P(\neg A)=1, \quad \forall A\in\ \mathcal A(\PP).
%\end{equation}
\item[$(b)$] If $b+2a\neq 1$ and $\PPP=\overline P=P$, then $P$ is 0-1 valued.
%\end{equation}
%$\PPP(A)=\overline P(A)$, %$\forall A\in\ \mathcal A(\PP)$, while the converse implication does not always hold. 
\end{itemize}
% The following statements are equivalent:
% \begin{itemize}
% \item[$(a)$] $b+2a=1$;
% \item[$(b)$] $\EEE_{\PPP}=\EEE_{\overline P}$;
% \item[$(c)$] $\PPP(A)=\overline P(A)$, $\forall A\in\mathcal A(\PP)$.
% \end{itemize}
\end{Prop}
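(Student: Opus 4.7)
The plan is to reduce both parts to direct computation with formulas \eqref{def_mu_NL} and \eqref{c}.

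For part (a), I would simply substitute $b+2a=1$ into \eqref{c}. The hypothesis rewrites as $a+b=1-a$, so $c=1-(a+b)=a$. Consequently $\overline P$ is NL$(a,b)$, the same family as $\PPP$, and since Definition \ref{NL_IP} is parameter-determined on $\mathcal A(\PP)\setminus\{\emptyset,\Omega\}$ and both functions agree at $\emptyset$ and $\Omega$, we conclude $\PPP=\overline P$.

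For part (b), I would fix an arbitrary $A\in\mathcal A(\PP)\setminus\{\emptyset,\Omega\}$ and show $P(A)\in\{0,1\}$. Writing $\phi(t)=\min\{\max\{t,0\},1\}$ for the $[0,1]$-clamp, we have
\[
\PPP(A)=\phi(bP_0(A)+a),\qquad \overline P(A)=\phi(bP_0(A)+c).
\]
Since $b+2a\neq 1$, the computation in (a) gives $c\neq a$. I would first handle the case $a<c$ (equivalently $b+2a<1$); the case $a>c$ is symmetric, as swapping $\PPP$ with $\overline P$ just interchanges the roles of the two NL representations. Setting $x=bP_0(A)+a$ and $y=bP_0(A)+c$, we have $x<y$, and the hypothesis $\PPP(A)=\overline P(A)=P(A)$ becomes $\phi(x)=\phi(y)$. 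Because $\phi$ is strictly increasing on $[0,1]$ and constant on each of $(-\infty,0]$ and $[1,\infty)$, equality with $x<y$ forces both values to lie in a common flat region: either $y\le 0$, in which case $P(A)=\phi(x)=0$, or $x\ge 1$, in which case $P(A)=\phi(y)=1$. Together with $P(\emptyset)=0$ and $P(\Omega)=1$ from Definition \ref{NL_IP}, this shows $P$ is $0$-$1$ valued.

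There is no substantive obstacle; the proof reduces to the elementary observation on flat regions of $\phi$. The only minor subtlety is avoiding a duplicated argument for $a>c$, which is handled cleanly by noting that $\overline P$ is itself NL$(c,b)$ so the roles of $\PPP$ and $\overline P$ are fully interchangeable.
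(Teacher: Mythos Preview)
Your proof of part~(a) is identical to the paper's. For part~(b), your argument is correct but follows a different route from the paper. The paper's proof invokes conjugacy: from $\PPP=\overline P=P$ it derives $P(A)+P(\neg A)=1$, then shows that $A$ and $\neg A$ cannot both lie in $\EEE_P$, since that would give $bP_0(A)+a = 1-(bP_0(\neg A)+a)$, i.e.\ $b+2a=1$. Hence at least one of $A,\neg A$ has $P$-value in $\{0,1\}$, and conjugacy forces the other to as well. Your argument instead compares the two NL formulas for $\PPP(A)$ and $\overline P(A)$ at the \emph{same} event $A$, reducing everything to the observation that the clamp $\phi$ is injective on $(0,1)$ and flat outside. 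This is arguably more direct: it avoids any appeal to conjugacy or to the complementary event $\neg A$, and works purely from the parameter discrepancy $a\neq c$. The paper's route, on the other hand, makes the connection to the sets $\NNN_P,\UUU_P,\EEE_P$ explicit, which ties in more naturally with the surrounding discussion. Both are short and elementary.
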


% $(a)\Leftrightarrow (b)$. To see this, recall that $\EEE_{\PPP}$ is given by \eqref{E} with $\mu=\PPP$, i.e.,
% $$
% \EEE_{\PPP} = \big\{A\in\mathcal A(\PP): -\tfrac{a}{b}<P_0(A)< \tfrac{1-a}{b}\big\}.
% $$
% Similarly, using also \eqref{c},
% $$
% \EEE_{\overline P} = \big\{A\in\mathcal A(\PP): -\tfrac{1-(a+b)}{b}<P_0(A)< \tfrac{a+b}{b}\big\}.
% $$
% Therefore, $\EEE_{\PPP}=\EEE_{\overline P}\stackrel{\rm def}{=}\EEE$ iff
% $$
% \begin{cases}
% -\tfrac{a}{b}=-\tfrac{1-(a+b)}{b}\\
% \tfrac{1-a}{b}=\tfrac{a+b}{b}
% \end{cases}
% $$
% which is equivalent to $b+2a=1$.

% $(b)\Leftrightarrow (c)$. To see that $(b)\Rightarrow (c)$, note that the argument above easily implies also $\NNN_{\PPP}=\NNN_{\overline P}\stackrel{\rm def}{=}\NNN$ and $\UUU_{\PPP}=\UUU_{\overline P}\stackrel{\rm def}{=}\UUU$ when $(b)$ holds, hence $\PPP(A)=\overline P(A)$ for $A\in \NNN\cup \UUU$. Further, using $(b)\Rightarrow (a)$, $\PPP(A)=\overline P(A)$ for $A\in \EEE$ (with a direct computation, or by Lemma \ref{lemma_link_sottosopraNL} $(b)$). 

% If $\PPP(A)=\overline P(A)$, $\forall A\in\mathcal A(\PP)$, then obviously $\EEE_{\PPP}=\EEE_{\overline P}$, hence $(c)\Rightarrow (b)$. 
%\end{proof}

%\begin{Rem}
From a first glance at Proposition \ref{equiv_sotto=sopra} $(a)$ one might be tempted to infer that condition $b+2a=1$ is enough to obtain NL models where $\PPP=\overline P=P$ is a precise probability. Yet, this is very often not the case. Just think for this that $\PPP$ may distort $P_0$ so that there exist $k$ pairwise disjoint events $A_1,\dots,A_k$ such that $\PPP(A_i)=0$, $i=1,\dots, k$, while $\PPP(%A_1\vee\dots\vee A_k
\bigvee_{i=1}^k A_i)>0$, which makes $\PPP$ non-additive. For instance, take (with $0<-a<b$) $P_0(A_i)\in\,]-\tfrac{a}{kb},-\tfrac{a}{b}[$, $i=1,\dots,k$. Then \eqref{N} ensures that $\PPP(A_i)=0$, $\forall i$, while $\PPP(\bigvee_{i=1}^k A_i)>0$ since $P_0(\bigvee_{i=1}^k A_i)>-\tfrac{a}{kb}k=-\tfrac{a}{b}$. For a result pointing out the constraints to obtain a probability $P$, see the later Proposition \ref{prop_N}.

On the other hand, we may obtain a probability $P$ also when $b+2a\neq 1$, but then $P$ is necessarily 0-1 valued by Proposition \ref{equiv_sotto=sopra} $(b)$. Thus $P$ is concentrated on a single atom of $\PP$, when $\PP$ is finite, while it is possible that $P(\omega)=0$, $\forall \omega\in\PP$, when $\PP$ is infinite. 

We remark that $\PPP=\overline P$ may also be 0-1 valued but not a precise probability, while $\PPP$ and $\overline P$ may or may not be coherent as imprecise probabilities (cf. the later Example \ref{01_not_coherent}).

In general, the role of precise probabilities within NL models is essentially marginal, as we shall also see when studying the various NL submodels.

%\end{Rem}

% \emph{Discussion}. In the next sections we shall investigate the submodels forming the family of NL models. The results from this section are useful for this. In fact, consider a generic NL measure $\mu(a,b)$. %USIAMO QUESTA NOTAZIONE?
% Then
% \begin{itemize}
% \item[$(i)$] Propositions \ref{prop_2coherence} and \ref{prop_sopra2coherence} suggest that $\mu$ should be regarded as a lower or upper probability according to whether, respectively, $b+2a<1$ or $b+2a>1$ ($\mu=\overline P=\PPP$ when $b+2a=1$, by Proposition \ref{equiv_sotto=sopra}). This ensures 2-coherence of $\mu$, $\mu^c$, and hence the very desirable property $\overline P\ge \PPP$.
% \item[$(ii)$] Example \ref{ex_PMM} shows that the PMM is a relevant special case of NL model, with $a+b=1$. This suggests modifying this equality to either $a+b\le 1$ or $a+b\ge 1$ (while keeping throughout $b>0$ by Remark \ref{b_neg}), in order to classify further NL models that generalise the PMM. 
% \end{itemize}

\subsection{Nearly-Linear submodels}
\label{NL_sub}
In the next sections we shall investigate the submodels forming the family of NL models. Our previous results are useful for determining such submodels. In fact, consider a generic NL measure $\mu(p_{ac},b)$, where the parameter $p_{ac}$ can be either $a$ (when we interpret $\mu$ as a lower probability) or $c$ (when $\mu$ is an upper probability). Then:
\begin{itemize}
\item[$(i)$] Propositions \ref{prop_2coherence}, \ref{prop_sopra2coherence} and Lemma \ref{lemma_link_sottosopraNL} $(a)$ suggest that $\mu$ should be regarded as a lower or upper probability according to whether, respectively,  $b+2p_{ac}<1$ or $b+2p_{ac}>1$ ($\mu=\overline P=\PPP$ when $b+2p_{ac}=1$, by Proposition \ref{equiv_sotto=sopra}). This ensures 2-coherence of $\mu$, $\mu^c$, and hence the very desirable property $\overline P\ge \PPP$.
\item[$(ii)$] Example \ref{ex_PMM} shows that the PMM is a relevant special case of NL model, with $a+b=1$. This suggests modifying this equality to either $a+b\le 1$ or $a+b\ge 1$ (while keeping throughout $b>0$ by Remark \ref{b_neg}), in order to classify NL models that generalise the PMM.
\item[$(iii)$] In the PMM, $a<0$ and $c=0$. Thus we have to consider the relaxations $a\ge 0$, $c\neq 0$. Note that, by \eqref{c}, $c>0, c=0, c<0$ iff, respectively, $a+b<1,a+b=1,a+b>1$. Therefore, $(ii)$ and $(iii)$ require checking whether $a+b<1,a+b=1,a+b>1$, and the sign of $a$. 
\end{itemize}
Thus, given $\mu(p_{ac},b)$, we may first fix $p_{ac}=a$ if $b+2p_{ac}<1$, $p_{ac}=c$ if $b+2p_{ac}>1$, then check whether $p_{ac}>0,p_{ac}<0$ and $b+p_{ac}>1,b+p_{ac}<1$. If $p_{ac}=a$, then conditions $p_{ac}>0$ and $p_{ac} +b >1$ become respectively $a>0$, $a+b>1$; if $p_{ac}=c$, they reduce to, respectively, $a+b<1$, $a<0$. Given that we are exploring three alternatives, following $(i)$, $(ii)$, $(iii)$ (we omit for the moment the limit situations such as $b+2a=1$ - they are treated within the submodel study in the next sections), there are at most 8 distinct cases. Actually, they give rise to three NL (sub)models. We list them in Table \ref{NLmodels}.

%%\centering{
\begin{table}[htbp!]
\begin{center}
\begin{tabular}{c|c|c|c|c}
Case & Parameter constraints & $\mu$ & $p_{ac}$ & Model\\
&&&&\\[-1em] \hline &&&&\\[-1em]
1 & $b+ 2p_{ac}<1, \, p_{ac}<0,\, p_{ac} + b<1$ & $\PPP$ & $a$ & VBM\\
&&&&\\[-1em] \hline &&&&\\[-1em] 
2 & $b+ 2p_{ac}>1, \, p_{ac}>0,\, p_{ac} + b>1$ & $\overline P$ & $c$ & VBM\\
&&&&\\[-1em] \hline &&&&\\[-1em] 
3 & $b+ 2p_{ac}<1, \, p_{ac}<0,\, p_{ac} + b>1$ & $\PPP$ & $a$ & HBM\\
&&&&\\[-1em] \hline &&&&\\[-1em] 
4,5 & $b+ 2p_{ac}>1, \, p_{ac}<0,\, p_{ac} + b %\mathrel{\begin{array}{@{}c@{}} < \\[1ex] > \end{array}} 
\neq 1$ & $\overline P$ & $c$ & HBM\\
&&&&\\[-1em] \hline &&&&\\[-1em] 
6 & $b+ 2p_{ac}<1, \, p_{ac}>0,\, p_{ac} + b<1$ & $\PPP$ & $a$ & RRM\\
&&&&\\[-1em] \hline &&&&\\[-1em] 
7 & $b+ 2p_{ac}>1, \, p_{ac}>0,\, p_{ac} + b<1$ & $\overline P$ & $c$ & RRM\\
&&&&\\[-1em] \hline &&&&\\[-1em] 
8 & $b+ 2p_{ac}<1, \, p_{ac}>0,\, p_{ac} + b>1$ & $-$ & $-$ & Impossible \\
\end{tabular}

\end{center}
\caption{
The Nearly-Linear (sub)models. For each model, the parameter constraints for its lower and upper probability are described in two consecutive lines (limit situations are omitted here). See Section \ref{vertical} for the Vertical Barrier Model (VBM), Section \ref{horizontal} for the Horizontal Barrier Model (HBM), Section \ref{sub_RRM} for the Restricted Range Model (RRM).}
\label{NLmodels}
\end{table}
%}
\subsection{Evaluating Uncertainty with NL Models}
%%%%%%%%%%%%%%non so bene dove va il pezzo che devo ancora scrivere

A NL model is a neighbourhood model, meaning that it obtains an uncertainty evaluation from a given probability $P_0$ by modifying it. Often, $P_0$ may be a reliable uncertainty assessment, possibly given by some expert or arising from symmetry judgements. Thus, why should $P_0$ be altered to obtain $\overline P$, $\PPP$ from it? The reason is that, in a betting scheme, $P_0$ is unfit as a selling price, because, being a fair price, it does not ensure an expected positive gain to the seller \cite{deF74}. A symmetric argument may be brought forth for introducing $\PPP$. In both cases, the betting scheme aims to model real situations, from the world of betting but not only: for instance, the role of `bookie' might be that of an insurer, a broker, and so on. We have already recalled in the Introduction a motivation of this kind for using the PMM. In the next sections, we shall see that NL models offer various ways of modifying $P_0$, that may correspond to an assessor's different beliefs and attitudes, in particular towards events of extreme or nearly $P_0$-probability, and that some of them may be conflicting. 

\section{The Vertical Barrier Model}
\label{vertical}
%We start with this section a detailed investigation of the sub-models forming the family of NL models. In the first model, to be analysed now, $\mu$ is a NL$(a,b)$ measure such that 
In this section we introduce a first family of NL models. We require that $\mu$ is a NL$(a,b)$ measure such that 
\begin{equation}
\label{ab_VB}
0\le a+b\le 1,\quad a\le 0.
\end{equation}
It is easy to see that conditions \eqref{ab_VB} may be obtained from Case 1 in Table \ref{NLmodels}, relaxing the strict inequalities there. 

Recalling $(ii)$ in Section \ref{NL_sub}, conditions \eqref{ab_VB} provide a relaxation of the PMM parameters. In fact, when $a+b=1$ and $a=-\delta<0$, hence $b=1+\delta>0$, $\mu$ is the lower probability of a PMM (Example \ref{ex_PMM}).\footnote{When $a=0,\,b=1$, $\mu$ is the probability $P_0$. We shall hereafter neglect this subcase.}

Further, putting $a=0,\, b=\varepsilon<1$, we obtain the lower probability of the \emph{$\varepsilon$-contamination model} (also termed linear-vacuous mixture in \cite{W}):
%\begin{equation}
$$
\underline P(A)=\varepsilon P_0(A), \quad \forall A\in\mathcal{A}(\PP)\setminus\{\Omega\}, \quad \underline P(\Omega)=1.
$$ 
Lastly, when $a+b=0$, from \eqref{N} it is $\NNN_{\mu}=\mathcal A(\PP)\setminus\{\Omega\}$, i.e., $\mu$ is the \emph{vacuous lower probability} $\PPP_V$, equal to 0, $\forall A\in\mathcal A(\PP)\setminus\{\Omega\}$. It is easily seen that allowing $a+b$ to be negative would make us obtain again only $\PPP_V$, hence we canceled this choice for $a,b$. 

Thus, it is clear that $\mu$ is a lower probability. 
It is also immediate to recognise that $\PPP$ (hence $\overline P$) is 2-coherent, by Proposition \ref{prop_2coherence}: Equation \eqref{ab_VB} implies that $b+2a\le 1$.

Since $bP_0(A)+a\le a+b\le 1$, for all $A\in\mathcal A(\PP)$, Equation \eqref{def_mu_NL} simplifies to 
$$
\mu(A)=\max\{bP_0(A) + a,0\}, \quad \forall A\in \mathcal A(\PP)\setminus\{\Omega\}
$$
(note that $\mu(\emptyset)$ is also computed with this formula).
%To establish whether $\mu$ has to be interpreted as a lower or upper probability, note that when 
The conjugate upper probability is easily obtained using \eqref{c}. Summing up, we define 
\begin{Def}
\label{VBM}
A \emph{Vertical Barrier Model (VBM)} is a NL model where $\underline P$ and its conjugate $\overline P$ are given by:
\begin{align}
\label{lower_VBM}
\PPP(A) & =\max\{bP_0(A)+a,0\}, & \forall A & \in\mathcal A(\PP)\setminus\{\Omega\}, &  \PPP(\Omega)=1,\\
\label{upper_VBM}
\overline P(A) & =\min\{bP_0(A)+c,1\}, & \forall A & \in\mathcal A(\PP)\setminus\{\emptyset\}, & \overline P(\emptyset)=0,
\end{align}
with $a,b$ satisfying \eqref{ab_VB} and $c$ given by \eqref{c}.   
\end{Def}
A VBM satisfies stronger consistency properties than 2-coherence:

\begin{Prop}
\label{VBM_coherent}
In a VBM,  $\underline P$ and $\overline P$ are coherent. Further, $\underline P$ is 2-monotone, $\overline P$ is 2-alternating. 
\end{Prop}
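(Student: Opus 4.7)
The plan is to express the VBM lower probability $\PPP$ as a convex combination of two lower probabilities already known to be coherent and $2$-monotone, namely a Pari-Mutuel lower probability and the vacuous one, and then to transfer the conclusion to $\overline P$ by conjugacy.

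Concretely, set $\lambda = a+b$, which by \eqref{ab_VB} lies in $[0,1]$. If $\lambda > 0$, also set $\delta = -a/\lambda \ge 0$, so that $a = -\delta\lambda$ and $b = (1+\delta)\lambda$. Factoring the non-negative $\lambda$ out of the maximum in \eqref{lower_VBM} gives, for every $A \in \mathcal A(\PP)\setminus\{\Omega\}$,
$$
\PPP(A) \;=\; \max\{bP_0(A) + a,\,0\} \;=\; \lambda \max\{(1+\delta)P_0(A) - \delta,\,0\} \;=\; \lambda\,\underline{P}_{\rm PMM}(A),
$$
where $\underline{P}_{\rm PMM}$ is the Pari-Mutuel lower probability of Definition~\ref{def_PMM} built on $P_0$ with parameter $\delta$. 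Since $\PPP_V(A) = 0$ for $A\neq\Omega$ and both $\underline{P}_{\rm PMM}$ and $\PPP_V$ equal $1$ at $\Omega$, this extends to the global identity
$$
\PPP \;=\; \lambda\,\underline{P}_{\rm PMM} + (1-\lambda)\PPP_V,
$$
with the convention that the right-hand side degenerates to $\PPP_V$ when $\lambda = 0$. The boundary sub-cases ($a=0$ giving $\delta=0$ and the $\varepsilon$-contamination model, $a+b=1$ giving the pure PMM, $a+b=0$ giving $\PPP_V$) arise as particular instances of this decomposition.

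Now $\underline{P}_{\rm PMM}$ is $2$-monotone (recalled right after Definition~\ref{def_PMM}), and $\PPP_V$ is immediately checked to be $2$-monotone by inspection of whether $\Omega$ appears among $A\vee B$, $A\wedge B$. The inequality \eqref{2monot} and the normalisation $\mu(\emptyset)=0,\mu(\Omega)=1$ are both preserved under convex combinations (by linearity), so $\PPP$ is a $2$-monotone normalised lower probability, and hence coherent by \cite[Corollary 6.16]{TdC} cited in Section~\ref{section_prel}. Finally, since a conjugate pair share their degree of consistency and $2$-monotonicity of $\PPP$ is equivalent to $2$-alternation of $\overline P$ (both recalled in Section~\ref{section_prel}), the corresponding properties follow at once for $\overline P$. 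No genuine obstacle is expected; the only point requiring a bit of care is verifying that the decomposition covers the whole parameter region \eqref{ab_VB}, including the degenerate boundaries $\lambda\in\{0,1\}$ and $\delta=0$, which each reduce the formula to one of its summands.
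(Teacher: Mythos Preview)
Your proof is correct, and it takes a genuinely different route from the paper's. The paper verifies \eqref{2monot} for $\PPP$ by a direct case analysis: the inequality is trivial when one of $A,B$ is $\Omega$ or when $\PPP(A)\PPP(B)=0$, and otherwise both $\PPP(A)$ and $\PPP(B)$ equal $bP_0(\cdot)+a$, so additivity of $P_0$ gives $\PPP(A)+\PPP(B)=bP_0(A\vee B)+a+bP_0(A\wedge B)+a\le\PPP(A\vee B)+\PPP(A\wedge B)$. Coherence then follows from \cite[Corollary 6.16]{TdC} exactly as you invoke it, and conjugacy transfers everything to $\overline P$.

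Your argument instead exhibits the structural identity $\PPP=\lambda\,\underline P_{\rm PMM}+(1-\lambda)\PPP_V$ with $\lambda=a+b$, $\delta=-a/\lambda$, and then appeals to the fact (stated after Definition~\ref{def_PMM}) that $\underline P_{\rm PMM}$ is $2$-monotone, together with the elementary observation that $2$-monotonicity and normalisation pass to convex combinations. This is slicker and yields the pleasant interpretation of every VBM as a mixture of a PMM with the vacuous model; the price is that you import $2$-monotonicity of the PMM from the literature rather than proving it in place, whereas the paper's computation is fully self-contained and in fact establishes the PMM case simultaneously. One small caveat: Definition~\ref{def_PMM} requires $\delta>0$, so at $a=0$ your ``$\underline P_{\rm PMM}$'' is really just $P_0$; you note this, and since a precise probability is trivially $2$-monotone there is no gap.
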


In the next proposition, we explore how $\PPP$ and $\overline P$ are related to $P_0$ in a VBM. It can be easily proven, see also \cite[Section 4]{CPV18}.

\begin{Prop}
\label{properties_VB}
Let $(\PPP,\overline P)$ be a VBM. Then, concerning $\overline P$:
\begin{itemize}
\item[$i)$] $\overline P(A)\ge P_0(A),\forall A$; %Obvious when $\overline P(A)=1$ or $A=\emptyset$; otherwise, use Def. \ref{VBM} to get $\overline P(A)=bP_0(A) + 1-(a+b)\ge P_0(A)$ iff  
%$(1-b)P_0(A)\le 1-(a+b)$. If $1-b\le 0$, this inequality holds trivially $(1-(a+b)\ge 0)$; if $1-b>0$, it is equivalent to %\displaystyle 
%$P_0(A)\le \frac{1-b-a}{1-b}$, true because $%\displaystyle 
%\frac{1-b-a}{1-b} \ge 1$ for $a\le 0$;
\item[$ii)$] $\overline P(A)\downarrow c \ge 0$ as $P_0(A)\downarrow 0$; %(for $P_0(A)$ low enough, $\overline P(A)=bP_0(A) + c \to c$);
\item[$iii)$] $\overline P(A)=1$ iff $P_0(A)\ge \frac{1-c}{b}=\frac{b+a}{b}$. 
\end{itemize}
Correspondingly, for $\PPP$:
\begin{itemize}
\item[$i')$] $\underline P(A)\le P_0(A),\, \forall A$;
\item[$ii')$] $\underline P(A)\uparrow a+b\le 1$ as $P_0(A)\uparrow 1$;
\item[$iii')$] $\underline P(A)=0$ iff $P_0(A)\le -\frac{a}{b}$.
\end{itemize}
\end{Prop}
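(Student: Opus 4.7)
The plan is to verify the six claims by direct computation from the explicit formulas \eqref{lower_VBM}, \eqref{upper_VBM} together with the parameter constraints $0 \le a+b \le 1$, $a\le 0$, and the conjugacy relation $c=1-(a+b)$ (which in particular yields $0\le c\le 1$). Concretely, I would establish $(i)$, $(ii)$, $(iii)$ for $\overline P$ and then deduce $(i')$, $(ii')$, $(iii')$ for $\PPP$ by conjugacy, noting that $\PPP(A)=1-\overline P(\neg A)$ and $P_0(\neg A)=1-P_0(A)$.

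For $(iii)$, I would simply rewrite $\overline P(A)=1$ as the condition $bP_0(A)+c\ge 1$, and rearrange to $P_0(A)\ge (1-c)/b=(a+b)/b$. For $(ii)$, I would observe that the map $P_0(A)\mapsto \min\{bP_0(A)+c,1\}$ is non-decreasing in $P_0(A)$ (since $b>0$), so decreasing $P_0(A)\downarrow 0$ forces $\overline P(A)$ to decrease monotonically; moreover for $P_0(A)$ small enough $bP_0(A)+c\le 1$, so the limit is $c\ge 0$ (where the non-negativity is exactly the constraint $a+b\le 1$ from \eqref{ab_VB}).

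The only claim requiring a small argument is $(i)$. If the min clips at $1$, then $\overline P(A)=1\ge P_0(A)$ is trivial; otherwise $\overline P(A)=bP_0(A)+c$ and I need $(b-1)P_0(A)\ge -c$. I would split into the cases $b\ge 1$ (then the left side is non-negative and $-c\le 0$) and $b<1$ (then $(b-1)P_0(A)\ge b-1$ since $P_0(A)\le 1$ and $b-1<0$, and the inequality $b-1\ge -c$ rearranges to $a\le 0$, which is precisely the second constraint in \eqref{ab_VB}).

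With $(i)$--$(iii)$ in hand, conjugacy gives the remaining three: $(i')$ is $\PPP(A)=1-\overline P(\neg A)\le 1-P_0(\neg A)=P_0(A)$; $(iii')$ is obtained by applying $(iii)$ to $\neg A$, turning $P_0(\neg A)\ge (a+b)/b$ into $P_0(A)\le -a/b$; and $(ii')$ is obtained from $(ii)$ applied to $\neg A$, which converts $P_0(\neg A)\downarrow 0$ into $P_0(A)\uparrow 1$, with limit value $1-c=a+b\le 1$, and monotone convergence from below. I do not anticipate any real obstacle; the only point to watch is that each of the constraints in \eqref{ab_VB} is invoked at exactly the right place, in particular $a\le 0$ in the non-trivial case of $(i)$ and $a+b\le 1$ for the non-negativity statements in $(ii)$ and $(ii')$.
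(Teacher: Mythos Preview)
Your proposal is correct and matches the paper's approach: the paper does not give a detailed argument, stating only that the proposition ``can be easily proven'' by direct inspection of \eqref{lower_VBM}, \eqref{upper_VBM} under the constraints \eqref{ab_VB}, which is exactly the computation you outline. Your use of conjugacy to transfer $(i)$--$(iii)$ to $(i')$--$(iii')$ is a clean shortcut fully in the spirit of the paper's treatment.
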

Firstly let us discuss $\overline P$, comparing it with its special case $c=0$, i.e., $a+b=1$ (and $b>1$), which specialises $\overline P(A)$ into $\overline P_{\rm PMM}(A)=\min\{bP_0(A),1\}$. %, for all $A\in\mathcal A(\bbbp)$.
In the betting interpretation, a subject assessing either $\overline P$ or $\overline P_{\rm PMM}$ is essentially unwilling to sell events whose reference or `true' probability $P_0$ is too high, by $iii)$, and in any case her/his selling price is not less than the `fair' price $P_0$, by $i)$. Unlike $\overline P_{\rm PMM}$, $\overline P$ adds a further barrier regarding low probability events: by $ii)$, if $c>0$ the $\overline P$-agent is not willing to sell (too) low probability events for less than $c$. We may deduce that, \emph{ceteris paribus}, the $\overline P$-agent is, loosely speaking, greedier than the $\overline P_{\rm PMM}$-agent. This can be easily justified in real-world situations: if the agent is, for instance, a bookmaker or an insurer, $c>0$ may take account of the agent's fixed costs in managing any bet/contract. 

A representation in the $(P_0, \overline P)$ plane is helpful to visualise the above facts. In Figure \ref{fig_VBM}, 1), a VBM $\overline P(c,b)$, with $c<1$, is drawn with a continuous bold line. It is compared with a PMM upper probability (dashed bold line) $\overline P_{{\rm{PMM}}}(A)=\min\{b'P_0(A),1\}$. Choosing $b'=\frac{b}{1-c}$ emphasises at best the core difference between $\overline P$ and $\overline P_{{\rm {PMM}}}$: while the horizontal barrier on the line $\overline P=1$ is in this case the same for both models, the VBM originates the vertical barrier of length $c$ on the $\overline P$ axis (dotted segment), not existing with any $\overline P_{\rm PMM}$. 
%see the VBM vertical barrier of length c on the $\overline P$ axis (dotted segment), while the horizontal barrier on the line $\overline P=1$ is the same for such a $b'$.

In general, while $c$ measures the agent's advantage at $P_0=0$, $b$ determines how it varies with $P_0$ growing. In fact, the advantage is unchanged, decreasing or increasing according to whether it is, respectively, $b=1, \, b<1, \, b>1$. 

Turning to the interpretation of $\PPP$ in the VBM, now the $\PPP$-agent acts as a buyer, but by $ii')$ does not want to pay more than $a+b$ for any event, even those whose probability $P_0$ is very high. If $a+b<1$, this ensures that the agent's gain from the transaction, $\underline G=I_A-\PPP(A)$, has a positive maximum (achieved when $A$ occurs): $\max \underline G=1-\PPP(A)\ge 1-(a+b)>0$.  By contrast, $\max \underline G\to 0$ as $P_0(A)\to 1$ if $a+b=1$, as in the PMM. Thus, $\underline P$ in the typical VBM (i.e., such that $a+b<1$) introduces an additional vertical barrier with respect to $\underline P_{\rm PMM}$: the dotted segment in the $P_0=1$ line of Figure \ref{fig_VBM}, $2)$. The parameters $a,b$ jointly influence the barrier width $1-(a+b)$. The ratio $-\frac{a}{b}$ is the supremum probability $P_0(A)$ such that the agent is not willing to buy $A$ unless for free. If $-\frac{a}{b}\ge \frac{1}{2}$, the agent's attitude is \emph{over-prudential}: an event $A$ is bought only if its probability $P_0$ is higher than that of its negation.

\begin{figure}
\centering
\includegraphics[width=\textwidth]{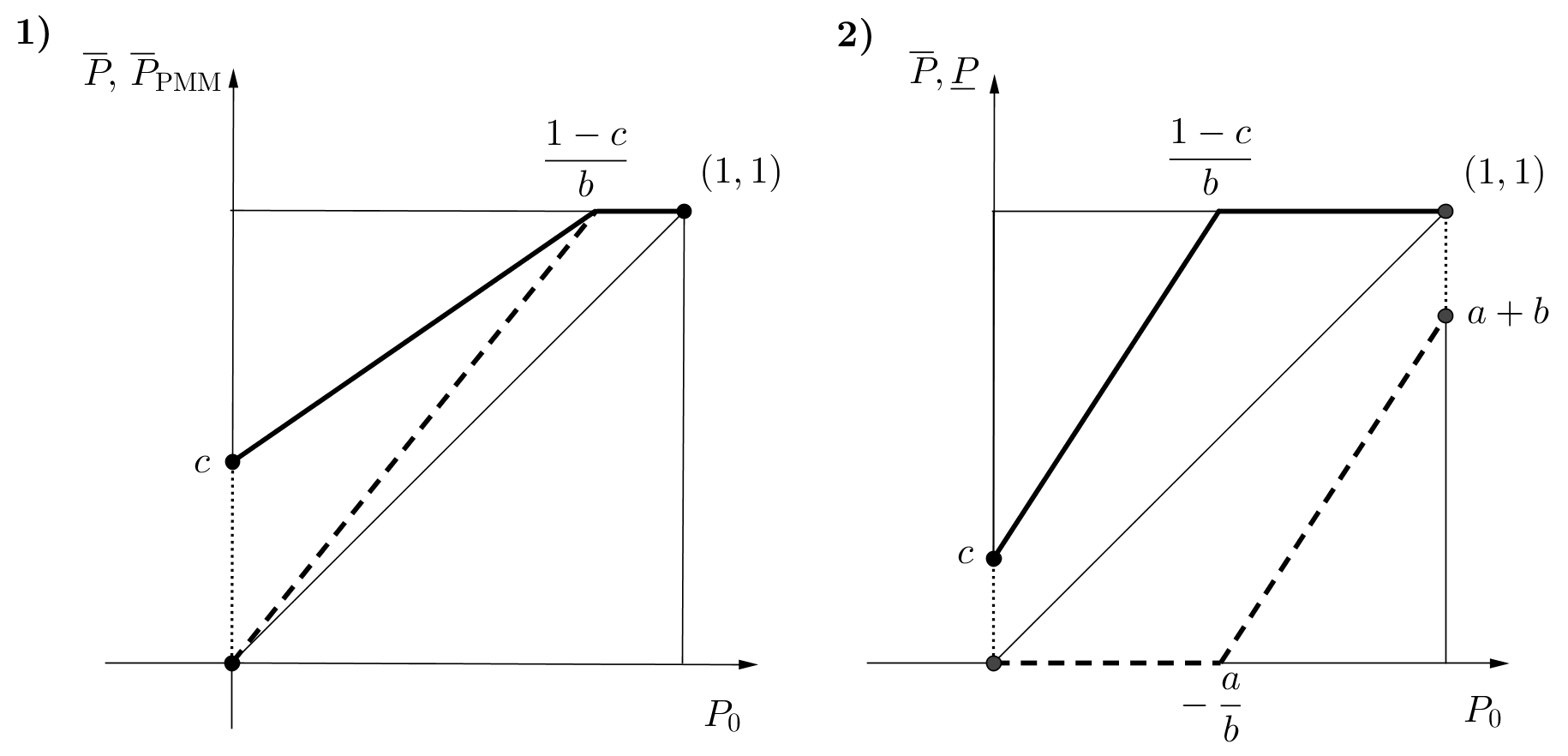}
\caption{$1)$ Plots of $\overline P(c,b)$ (continuous bold line) and $\overline P_{\rm PMM}(0,b')$ (dashed bold line), with $b'=\frac{b}{1-c}$, against $P_0$ ($\overline P,\overline P_{\rm PMM}$ overlap at the line $\overline P=\overline P_{\rm PMM}=1$). $2)$ Plots of $\overline P$ (continuous bold line) and its conjugate $\PPP$ (dashed bold line) against $P_0$.}
\label{fig_VBM}
\end{figure}

\begin{Rem}
The representations in the $(P_0,\PPP)$ or $(P_0,\overline P)$ plane do not imply that $\PPP$ or $\overline P$ is always a function of $P_0$. For instance, take the VBM with $a+b<1$: when $P_0=1$, $\PPP$ may take the value $a+b$ but also (at $\Omega$) the value 1. 
\end{Rem}

As for the existence of precise probabilities within the VBM, from Proposition \ref{properties_VB} $i),i')$, any such probability $P=\PPP=\overline P$ coincides with $P_0$, which is then the only precise probability within this model.

\section{The Horizontal Barrier Model}
\label{horizontal}
To introduce a second family of NL models, let now $\mu$ be a NL$(a,b)$ measure, with the conditions
\begin{equation}
\label{cond_HB}
a+b\ge 1, \quad b+2a\le 1.
\end{equation}
While the VBM relaxes the PMM equality $a+b=1$ to $a+b\le 1$, now the opposite generalisation $a+b\ge 1$ is made. Further, the VBM condition $a\le 0$ in \eqref{ab_VB} ensures that $b+2a\le 1$ there, while here this is explicitly required in \eqref{cond_HB}. Conditions \eqref{cond_HB} are also straightforwardly obtained from $\mu(p_{ac},b)$ satisfying the (relaxed) parameter constraints of Case 3 in Table \ref{NLmodels}.

Again, $a+b=1$ implies $a\le 0$ and thus we reobtain the PMM. For the sequel, we modify \eqref{cond_HB} to
\begin{equation}
\label{strict_cond_HB}
a+b>1,\quad b+2a\le 1,
\end{equation}
to focus on the models in this family that are not PMMs.

Note that conditions \eqref{strict_cond_HB} imply easily that
\begin{equation}
\label{sign_ab}
a<0, \quad b>1.
\end{equation}

\begin{Prop}
\label{HB_2coherent}
Let $\mu(a,b):\mathcal A(\PP)\to \RR$ be a NL measure satisfying \eqref{strict_cond_HB}. Then $\mu$ is a 2-coherent lower probability, and its conjugate $\mu^c(c,b)$ is a 2-coherent upper probability. 
\end{Prop}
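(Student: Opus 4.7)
The plan is to obtain both claims as immediate corollaries of results already established in the preliminaries, since the hypothesis \eqref{strict_cond_HB} literally contains the inequality $b+2a\le 1$ that appears as the sufficient condition in Proposition \ref{prop_2coherence}. No new betting‑scheme argument is needed.

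For the first half, I would simply invoke Proposition \ref{prop_2coherence}: because $\mu$ is NL$(a,b)$ and $b+2a\le 1$ by assumption, that proposition (together with the normalisation $\mu(\emptyset)=0$, $\mu(\Omega)=1$ built into Definition \ref{NL_IP}, and monotonicity from Lemma \ref{lemma_capacity}) yields that $\mu$ is a 2-coherent lower probability. Equivalently, one could verify the three conditions $(i)$--$(iii)$ of Proposition \ref{prop_character_2coherence} directly, checking in particular that $\mu(A)+\mu(\neg A)\le 1$ by splitting into cases according to whether $bP_0(A)+a$ and $bP_0(\neg A)+a$ lie above $0$; the strict inequality $b+2a\le 1$ is exactly what is needed in the case where neither truncation is active.

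For the conjugate, I would first apply Proposition \ref{conjugate} so that $\mu^c$ is NL$(c,b)$ with $c=1-(a+b)$. Lemma \ref{lemma_link_sottosopraNL}(a) then rewrites $b+2a\le 1$ as $b+2c\ge 1$, which is precisely the hypothesis of Proposition \ref{prop_sopra2coherence}; that proposition delivers the 2-coherence of $\mu^c$ as an upper probability.

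I expect no real obstacle: the only substantive observation is that the additional HBM hypothesis $a+b>1$ plays no role in establishing 2-coherence itself. Its purpose is to single out this submodel (equivalently $c<0$), distinguishing it from the VBM of Section \ref{vertical} and from the boundary PMM case $a+b=1$, and thereby justifying---via the maximum consistency principle recalled in Section \ref{NL_sub}---that the natural reading of $\mu$ is as a lower probability with $\mu^c$ its conjugate upper probability.
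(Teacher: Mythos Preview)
Your proposal is correct and matches the paper's own proof essentially verbatim: the paper also derives the result directly from Proposition~\ref{prop_2coherence}, Lemma~\ref{lemma_link_sottosopraNL}\,$(a)$, and Proposition~\ref{prop_sopra2coherence}. Your additional remark that the hypothesis $a+b>1$ plays no role in establishing 2-coherence (serving only to delimit the HBM submodel) is accurate and worth keeping.
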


\begin{proof}
	Follows directly from Proposition \ref{prop_2coherence}, Lemma \ref{lemma_link_sottosopraNL} $(a)$ and Proposition \ref{prop_sopra2coherence}.
\end{proof}
From Proposition \ref{HB_2coherent}, $\mu \, (\mu^c)$ is conveniently viewed as a lower (an upper) probability. We define then:
\begin{Def}
\label{HBM_mod}
A \emph{Horizontal Barrier Model (HBM)} is a NL model where $a,b$ are as in \eqref{strict_cond_HB} (as in \eqref{cond_HB} if we wanted to include PMMs), $c=1-(a+b)<0$ and, $\forall A\in\mathcal{A}(\PP)\setminus\{\emptyset,\Omega\}$, 
\begin{align}
\label{lower_HBM}
\PPP(A) & =\min\{\max\{bP_0(A) + a,0\},1\},\\
%\end{equation}  
%\begin{equation}
\label{upper_HBM}
\overline P(A) & =\max\{\min\{bP_0(A) + c,1\},0\}. 
\end{align} 
\end{Def}

\subsection{Beliefs elicited by a HBM}
To clarify what sort of beliefs are conveyed by a HBM, let us first state some properties of these models. They are easily established.

\begin{Prop}
\label{link_HB_P0}
Let $(\PPP,\overline P)$ be a HBM. Then, concerning $\PPP$
\begin{itemize}
\item[$j)$] 
$\PPP(A)> P_0(A)$ iff $1>P_0(A)> -\frac{a}{b-1};$
\item[$jj)$] $\PPP(A)=0$ iff $P_0(A)\le -\frac{a}{b}; \quad \NNN_{P_0}\subset \NNN_{\PPP}$;
\item[$jjj)$] $\PPP(A)=1$ iff $P_0(A)\ge\frac{1-a}{b}; \quad \UUU_{P_0}\subset \UUU_{\PPP}$.
\end{itemize}
As for $\overline P$,
\begin{itemize}
\item[$j')$] 
$\overline P(A)<P_0(A)$ iff $0<P_0(A)< -\frac{c}{b-1};$
\item[$jj')$] $\overline P(A)=0$ iff $P_0(A)\le -\frac{c}{b}; \quad \NNN_{P_0}\subset \NNN_{\overline P}$;
\item[$jjj')$] $\overline P(A)=1$ iff $P_0(A)\ge\frac{1-c}{b}; \quad \UUU_{P_0}\subset \UUU_{\overline P}$.
\end{itemize}
\end{Prop}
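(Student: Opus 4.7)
The proof is essentially a direct computation from the definitions \eqref{lower_HBM}--\eqref{upper_HBM}, combined with the sign consequences \eqref{sign_ab} (namely $a<0,\,b>1$) and the analogous facts for $c$: from $c = 1-(a+b)<0$ and $b+c = 1-a>1$, one gets $c<0$ and $b+c>1$.

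\emph{Plan for $(j)$.} I would work in the ``linear regime'' where $\PPP(A)=bP_0(A)+a$ and reduce the inequality $\PPP(A)>P_0(A)$ to $(b-1)P_0(A)>-a$. Since $b-1>0$, this is equivalent to $P_0(A)>-a/(b-1)$. The threshold lies in $(0,1)$: positivity comes from $-a>0,\,b-1>0$, and $-a/(b-1)<1$ is equivalent to $a+b>1$, which holds by \eqref{strict_cond_HB}. I would then check the three boundary regions: (i) if $P_0(A)\le -a/b$, then $\PPP(A)=0\le P_0(A)$; (ii) if $-a/b<P_0(A)\le -a/(b-1)$, the linear formula still gives $\PPP(A)\le P_0(A)$; (iii) if $P_0(A)>-a/(b-1)$ and $bP_0(A)+a$ exceeds $1$, then $\PPP(A)=1$, which is strictly greater than $P_0(A)$ precisely when $P_0(A)<1$. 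Combining the three cases gives $(j)$.

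\emph{Plan for $(j')$.} The argument is parallel, replacing $a$ by $c$ and using $c<0$, $b>1$. The inequality $\overline P(A)<P_0(A)$ in the linear regime becomes $P_0(A)<-c/(b-1)$; the threshold is in $(0,1)$ since $-c/(b-1)<1$ is equivalent to $b+c>1$, i.e.\ $1-a>1$, which is $a<0$. The strict condition $P_0(A)>0$ is needed because for $P_0(A)=0$ one finds $\overline P(A)=\max\{c,0\}=0=P_0(A)$ (no strict inequality); for $P_0(A)\in(0,-c/b]$ one has $\overline P(A)=0<P_0(A)$, and for $P_0(A)\in(-c/b,-c/(b-1))$ the linear formula yields $\overline P(A)<P_0(A)$. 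At $P_0(A)=-c/(b-1)$ equality holds, and beyond it $\overline P(A)>P_0(A)$.

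\emph{Plan for $(jj),(jjj),(jj'),(jjj')$.} These are immediate applications of \eqref{N} and \eqref{U} to $\PPP$ seen as NL$(a,b)$ and to $\overline P$ seen as NL$(c,b)$. The inclusion $\NNN_{P_0}\subset \NNN_\PPP$ follows because $P_0(A)=0$ trivially satisfies $0\le -a/b$ (using $a<0,b>0$), so $\PPP(A)=0$; likewise $\NNN_{P_0}\subset\NNN_{\overline P}$ uses $c<0$. The inclusion $\UUU_{P_0}\subset\UUU_\PPP$ reduces to $1\ge (1-a)/b$, i.e.\ $a+b\ge 1$, which is \eqref{strict_cond_HB}; for $\UUU_{P_0}\subset\UUU_{\overline P}$ one needs $1\ge (1-c)/b$, i.e.\ $b+c\ge 1$, and as noted $b+c=1-a>1$.

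\emph{Main obstacle.} There is no substantial obstacle; the only care required is in the boundary analysis for $(j)$ and $(j')$, where the strict-inequality thresholds $P_0(A)<1$ and $P_0(A)>0$ must be retained to cover the flattened parts of the graph of $\PPP$ (respectively $\overline P$) at levels $1$ and $0$.
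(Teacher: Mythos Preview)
Your proposal is correct and matches the paper's intent: the paper does not supply an explicit proof of this proposition, remarking only that these properties ``are easily established,'' and your direct case analysis from \eqref{lower_HBM}--\eqref{upper_HBM} together with \eqref{N}, \eqref{U} and the parameter signs is exactly the routine verification the paper has in mind. Your handling of the boundary cases (in particular the need for the strict bounds $P_0(A)<1$ in $(j)$ and $P_0(A)>0$ in $(j')$) is careful and accurate.
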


It is also easy to see (using \eqref{strict_cond_HB}, \eqref{sign_ab}) that the conditions in Proposition \ref{link_HB_P0} are not vacuous, i.e., may be satisfied by some event \cite{CPV18}. 

Conditions $j),jj),jjj)$ point out an interesting feature of $\PPP$ in the HBM: the beliefs it represents may be conflicting and, partly, irrational. In fact, assuming again that $P_0$ is the `true' probability for the events in $\mathcal A(\PP)$, by $j)$ the assessor is willing to buy some events for less, others for more than their probability $P_0$. In the extreme situations, by $jj)$ and $jjj)$, the assessor would not buy events whose probability is too low, whilst would certainly buy a high probability event $A$ at the price of 1, gaining from the transaction at most 0 (if $A$ occurs). Thus the assessor underestimates the potential losses of a transaction regarding high probability events, but overestimates them  with low probability events. Note also that $\PPP$ broadens, with respect to $P_0$, both the set of null events ($\NNN_{P_0}\subset \NNN_{\PPP}$) and the set of universal events ($\UUU_{P_0}\subset \UUU_{\PPP}$).

It is then natural to wonder whether the model limits somehow its non-prudential side, or which of the conflicting moods prevails. In some sense, the prudential one.

In fact, while the assessor's behaviour can be more prudential than $P_0$ suggests on both $A$ and its negation $\neg A$, s/he cannot offer a higher price than $P_0$ for both. If, for instance, s/he does so for $A$, so that $\PPP(A)>P_0(A)$, then necessarily $\PPP(\neg A)<P_0(\neg A)$: if not, $\PPP(A) + \PPP(\neg A)>P_0(A) + P_0(\neg A)=1$, a contradiction by Proposition \ref{prop_character_2coherence} $(ii)$, since $\PPP$ is 2-coherent.

Secondly, note that by $jj)$ and $jjj)$ the HBM sets up two horizontal barriers in  the $(P_0,\underline P)$ plane (cf.  Figure \ref{fig_HBM}, $1)$). The lower (prudential) barrier is a segment with measure $-\frac{a}{b}$, the upper barrier (in the imprudent area) a  segment measuring $1-\frac{1-a}{b}$. Outside the limit situation $b+2a=1$, the upper barrier is narrower: $-\frac{a}{b}>1-\frac{1-a}{b}$ iff $b+2a<1$, true by \eqref{strict_cond_HB}. Similarly, the boundary probability $P_0$ between the opposite attitudes is set at $-\frac{a}{b-1}$, larger or at worst equal to $\frac{1}{2}$, by  \eqref{strict_cond_HB}. Although these  properties have a partly qualitative flavour, being uninformative about how many events $A$ are such that $\PPP(A)>P_0(A)$ and about their `measure of imprudence' $\PPP(A)-P_0(A),$ they nevertheless represent a restriction to the imprudent attitude.

To guarantee that there may be events $A$ such that $\PPP(A)>P_0(A)$, the condition $a+b>1$ (not holding with the PMM and the VBM) is essential: in fact it is equivalent in the HBM to $-\frac{a}{b-1}<1$, ensuring that the graph of $\PPP$ intersects the line $\PPP=P_0$ for $P_0\in\,]0,1[$.

\begin{figure}
\centering
\includegraphics[width=\textwidth]{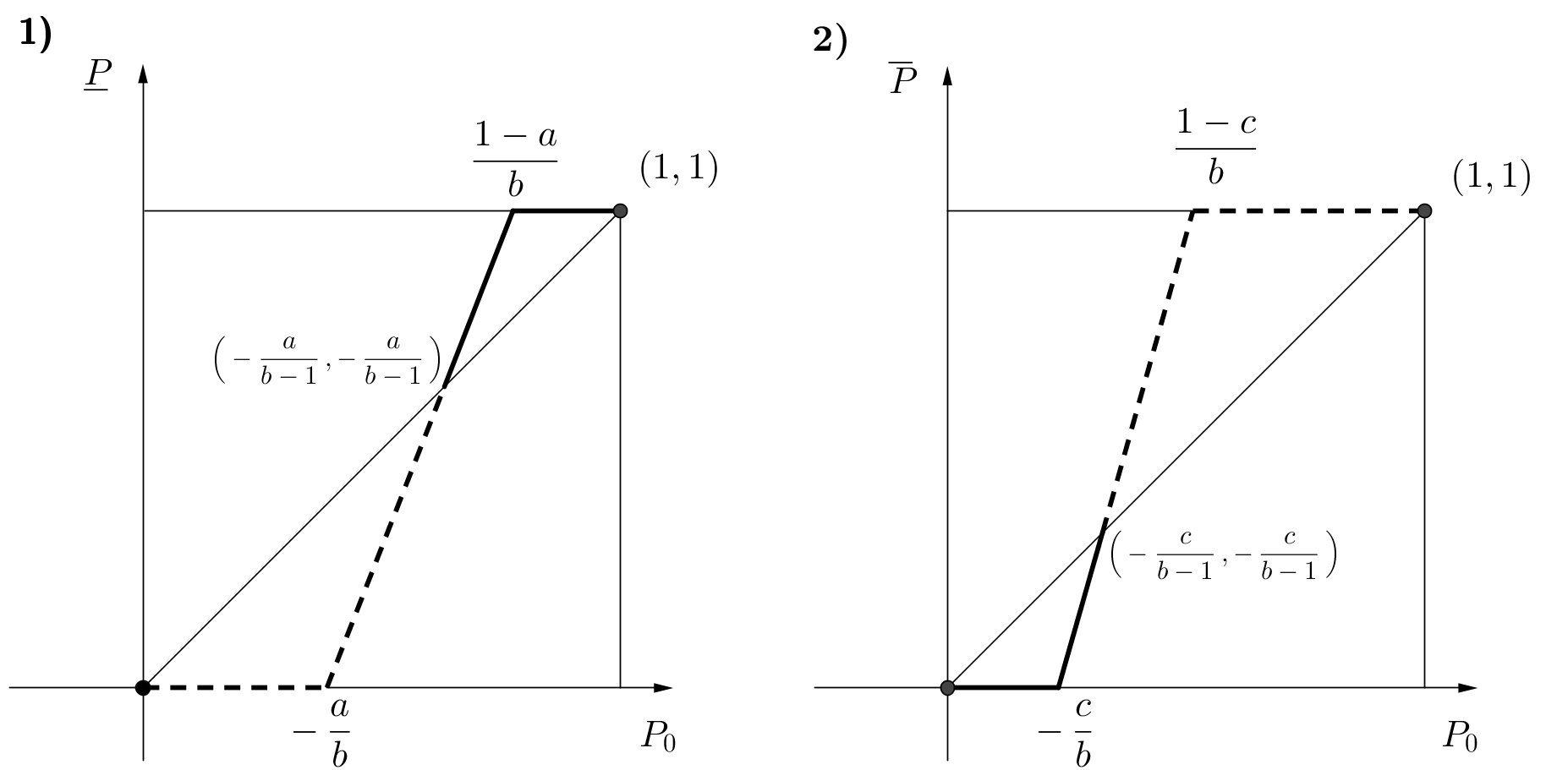}
\caption{Plots of $\PPP,\overline P$ against $P_0$ in the HBM. $\overline P$ and $\PPP$ are dashed bold in their prudential part, continuous bold otherwise. $1)$ $\PPP$ against $P_0$. $2)$ Its conjugate $\overline P$  against $P_0$.}
\label{fig_HBM}
\end{figure}

Turning to $\overline P$, we get to specular conclusions. Again the $\overline P$-assessor is subject to conflicting moods: s/he is unwilling to sell high probability events, but would give away for free low probability events. The lower barrier represents now the imprudent behaviour at its utmost degree. Its length $-\frac{c}{b}$ is smaller or at worst equal to the length $1-\frac{1-c}{b}$ of the upper barrier, because $b+2c\ge 1$ by Lemma \ref{lemma_link_sottosopraNL} $(a)$ (equality holds iff $b+2c=1$). The upper barrier emphasises now the cautious attitude. See also Figure \ref{fig_HBM}, 2) for a graphical illustration. 

From the discussion above, one would probably guess that the HBM can be no more than 2-coherent, thus crediting the partly irrational beliefs it represents with lack of coherence. Perhaps surprisingly, while this is true for the `typical' HBM, coherence is compatible with some HBM. Even more, there are instances of $\PPP$ (and $\overline P$) which are precise probabilities, as in the following example. 

\begin{Ex}
\label{ex_HB_precise}
Given $\PP=\{\omega_1,\omega_2,\omega_3\}$, define the probability $P_0$ as in Table \ref{ex_precise0}. Choosing $b=1.25, \,a=-0.15,$ Equation \eqref{strict_cond_HB} holds, with $b+2a=0.95\neq 1$, while $c=-0.10$. Thus $(\PPP,\overline P)$ is a HBM, and $\PPP,\overline P$ may be computed with \eqref{lower_HBM}, \eqref{upper_HBM}. The results are displayed in Table \ref{ex_precise0}, and $\PPP=\overline P=P$, with $P$ a precise probability (concentrated on $\omega_3$). 

%\centering{
\begin{table}[htbp!]
\begin{center}
\begin{tabular}{c|c|c|c|c|c|c|c|c}
& $\omega_1$ & $\omega_2$ & $\omega_3$ & $\omega_1\vee\omega_2$ & $\omega_1\vee\omega_3$ & $\omega_2\vee\omega_3$ & $\emptyset$ & $\Omega$\\
&&&&&&&&\\[-1em] \hline &&&&&&&&\\[-1em]
$P_0$ & 0.02 & 0.02 & 0.96 & 0.04 & 0.98 & 0.98 & 0 & 1\\
&&&&&&&&\\[-1em] \hline &&&&&&&&\\[-1em]
$\PPP$ & 0 & 0 & 1 & 0 & 1 & 1 & 0 & 1\\
&&&&&&&&\\[-1em] \hline &&&&&&&&\\[-1em]
$\overline P$ & 0 & 0 & 1 & 0 & 1 & 1 & 0 & 1
\end{tabular}
\caption{Values of $P_0,\,\PPP,\,\overline P$ in Example \ref{ex_HB_precise}.}
\label{ex_precise0}
\end{center}
\end{table}
%}
\end{Ex}

In the next subsection, we investigate more closely coherent HBMs.

\subsection{Coherent upper/lower probabilities within HBMs}
\label{coherent_uplow_HB}
In order to study the properties of coherent imprecise probabilities within HBMs, it is convenient to suppose first that $(\PPP,\overline P)$ is defined on $\mathcal A(\PP)$, with $\PP$ a \emph{finite} partition. 

The following proposition is a first result, which is instrumental in the proofs of later propositions:

\begin{Prop}
\label{Psup_sub_implies}
Let $\overline P:\mathcal A(\PP)\to \RR$ be an upper probability satisfying monotonicity and subadditivity (i.e., condition \eqref{subadd}). Suppose that $\PP=\{\omega_1,\dots,\omega_n\}$. % is finite. %, and that $\overline P$ is subadditive . 
Then, for any $A\in\mathcal A(\PP)$,
\begin{itemize}
\item[$(a)$] $A\in \mathcal N_{\overline P}$ iff $\displaystyle A=\bigvee_{i=1}^k \omega_{j_i}$, with $\omega_{j_i}\in\mathcal N_{\overline P}$, $i=1,\dots,k$ (if $k=0$, $\displaystyle \bigvee_{i=1}^k \omega_{j_i}=\emptyset$).
\end{itemize}
Suppose further that $\overline P$ is the upper probability of a HBM. Then also:
\begin{itemize}
\item[$(b)$] $A\in\mathcal E_{\overline P}$ iff $\displaystyle A=\omega^+\vee \bigvee_{i=1}^k \omega_{j_i}$, with $\omega^+\in \mathcal E_{\overline P}\, (\omega^+\in\PP), \, \omega_{j_i}\in \mathcal N_{\overline P}, \, P_0(\omega_{j_i})=0$, $i=1,\dots,k$ (if $k=0$, $\displaystyle \bigvee_{i=1}^k \omega_{j_i}=\emptyset$);
\item[$(c)$] if $A\in\mathcal E_{\overline P}$, there exists $\omega^*\in\mathcal E_{\overline P}\cup\mathcal U_{\overline P}$ ($\omega^*\in\PP$), $\omega^*\neq \omega^+$, such that 
\begin{equation}
\label{+*}
bP_0(\omega^+)+ c + bP_0(\omega^*) + c \ge 1.
\end{equation}
\end{itemize}
\end{Prop}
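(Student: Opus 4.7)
My plan is to prove the three parts in order, exploiting the HBM sign constraint $c<0$ (from \eqref{strict_cond_HB}) together with subadditivity. The recurring lever is that, in a HBM, two essential atoms cannot combine into an essential union without violating subadditivity — once this is established it drives both (b) and (c).

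For (a), I use monotonicity: if $A \in \mathcal N_{\overline P}$, write $A = \bigvee_i \omega_{j_i}$; each $\omega_{j_i} \subseteq A$ gives $0 \le \overline P(\omega_{j_i}) \le \overline P(A) = 0$. Conversely, if all $\omega_{j_i} \in \mathcal N_{\overline P}$, iterated subadditivity yields $\overline P(A) \le \sum_i \overline P(\omega_{j_i}) = 0$, and nonnegativity (from monotonicity and $\overline P(\emptyset)=0$) forces $\overline P(A)=0$; the case $k=0$ is $A=\emptyset$.

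For (b), the ``if'' direction is immediate: $P_0(\omega_{j_i})=0$ gives $P_0(A)=P_0(\omega^+)$, hence $\overline P(A) = \overline P(\omega^+) \in (0,1)$. For the forward direction, I proceed in three steps. (i) No atom of $A$ is universal (monotonicity would give $\overline P(A)\ge 1$). (ii) At most one atom of $A$ is essential: if $\omega_{h_1}, \omega_{h_2}$ were both essential, $\omega_{h_1}\vee\omega_{h_2}$ is not null (monotonicity), and cannot be essential because \eqref{upper_HBM} and subadditivity would give
\begin{equation*}
b(P_0(\omega_{h_1})+P_0(\omega_{h_2}))+c \le b(P_0(\omega_{h_1})+P_0(\omega_{h_2}))+2c,
\end{equation*}
forcing $c\ge 0$, a contradiction; hence it is universal, contradicting $\overline P(A)<1$. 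Existence of at least one essential atom in $A$ follows from $A\notin \mathcal N_{\overline P}$ and part (a). (iii) Every null atom $\omega_{j_i}$ of $A$ has $P_0(\omega_{j_i})=0$: $\omega^+\vee\omega_{j_i}$ must be essential (bounded away from $0$ by $\omega^+$, strictly below $1$ by $A$, via monotonicity), so subadditivity gives $b(P_0(\omega^+)+P_0(\omega_{j_i}))+c \le \overline P(\omega^+)+0$, forcing $P_0(\omega_{j_i})\le 0$, hence $=0$.

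For (c), I first locate an atom $\omega^*\neq \omega^+$ lying in $\mathcal E_{\overline P}\cup\mathcal U_{\overline P}$: applying subadditivity to $\Omega = \omega^+\vee \neg\omega^+$ yields $\overline P(\neg\omega^+)\ge 1-\overline P(\omega^+)>0$, so by part (a) at least one atom of $\neg\omega^+$ is not null. Case split. If $\omega^*\in\mathcal U_{\overline P}$, then $bP_0(\omega^*)+c\ge 1$ makes \eqref{+*} immediate because $bP_0(\omega^+)+c=\overline P(\omega^+)\ge 0$. If $\omega^*\in\mathcal E_{\overline P}$, the step-(ii) dichotomy applies to $\omega^+\vee\omega^*$, which can be neither essential (subadditivity would again force $c\ge 0$) nor null (monotonicity); hence $\overline P(\omega^+\vee\omega^*)=1$, and subadditivity yields $bP_0(\omega^+)+c+bP_0(\omega^*)+c = \overline P(\omega^+)+\overline P(\omega^*) \ge \overline P(\omega^+\vee\omega^*)=1$, which is \eqref{+*}. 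The principal obstacle throughout is isolating the ``pair of essential atoms forces $c\ge 0$'' contradiction, which depends crucially on the HBM's affine-transformation structure combined with subadditivity; once that lever is identified, the remainder of the argument is bookkeeping.
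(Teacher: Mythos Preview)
Your proof is correct and uses the same ingredients as the paper (monotonicity, subadditivity, and the HBM constraint $c<0$), but you organise the argument differently in a way worth noting. In the forward direction of (b), the paper applies subadditivity once to the full atomic decomposition of $A$, obtaining the single chain $mc \ge b\sum_{\omega\in\mathcal N_{\overline P},\,\omega\Rightarrow A} P_0(\omega) + c \ge c$, from which $m=1$ and $P_0(\omega_{j_i})=0$ drop out simultaneously. You instead work pairwise: first isolate the lemma ``two essential atoms have universal union'' (since an essential union would force $c\ge 0$), then separately handle each null atom via $\omega^+\vee\omega_{j_i}$. In (c), the paper case-splits on whether $\PP\cap\mathcal U_{\overline P}$ is empty and, when it is, uses subadditivity on all of $\Omega$; you instead locate $\omega^*$ directly inside $\neg\omega^+$ via $\overline P(\neg\omega^+)\ge 1-\overline P(\omega^+)>0$ and part (a), then reuse your pairwise lever. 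The paper's route is slightly more compact; yours is more modular and makes the driving contradiction explicit and reusable, which is arguably cleaner. One small point: in your step (ii) and in (c), the case $\omega_{h_1}\vee\omega_{h_2}=\Omega$ (respectively $\omega^+\vee\omega^*=\Omega$) should be noted separately, since the formula $\overline P=bP_0+c$ applies only off $\{\emptyset,\Omega\}$; but in that case the union is trivially universal, so the conclusion is unaffected.
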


\begin{Cor}
\label{unico_e}
In the hypotheses of Proposition \ref{Psup_sub_implies}, $\forall A\in \mathcal E_{\overline P}$, there exists one and only one $\omega^+\in\PP$ such that $\overline P(A)=\overline P(\omega^+)$. 
\end{Cor}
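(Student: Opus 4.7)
The corollary is essentially a structural consequence of Proposition~\ref{Psup_sub_implies}~(b), with existence immediate and uniqueness being the substantive task.

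For existence, I would invoke (b) to write $A = \omega^+ \vee \bigvee_{i=1}^k \omega_{j_i}$ with $\omega^+ \in \mathcal E_{\overline P}$ and $P_0(\omega_{j_i})=0$ for each $i$. Finite additivity of $P_0$ then gives $P_0(A)=P_0(\omega^+)$, and since both $A$ and $\omega^+$ lie in $\mathcal E_{\overline P}$ the NL expression \eqref{upper_HBM} is unsaturated at both events, whence $\overline P(A) = bP_0(A) + c = bP_0(\omega^+) + c = \overline P(\omega^+)$.

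For uniqueness, I would take a hypothetical $\omega' \in \PP$ with $\omega' \neq \omega^+$ and $\overline P(\omega') = \overline P(A)$. Since $\overline P(A) \in (0,1)$ we have $\omega' \in \mathcal E_{\overline P}$. In the easy case $\omega' \le A$, by (b) the atom $\omega'$ must appear among the null $\omega_{j_i}$, forcing $\omega' \in \mathcal N_{\overline P}$---a contradiction. In the remaining case $\omega' \wedge A = \emptyset$, the disjunction $A \vee \omega' = \omega^+ \vee \omega' \vee \bigvee_i \omega_{j_i}$ contains two atoms in $\mathcal E_{\overline P}$ (namely $\omega^+$ and $\omega'$) alongside the null $\omega_{j_i}$. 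By (b) such an event cannot itself lie in $\mathcal E_{\overline P}$, and by (a) it is not in $\mathcal N_{\overline P}$; hence $A \vee \omega' \in \mathcal U_{\overline P}$, giving $\overline P(A \vee \omega')=1$. Subadditivity then yields $2\overline P(A) \ge 1$; combining this with the auxiliary atom $\omega^* \neq \omega^+$ furnished by (c) (satisfying $bP_0(\omega^+)+c+bP_0(\omega^*)+c \ge 1$) one narrows the admissible configuration of $\omega^+, \omega', \omega^*$ until a contradiction with $A \in \mathcal E_{\overline P}$ emerges.

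The main obstacle is this second case of uniqueness: ruling out an atom $\omega'$ disjoint from $A$ requires carefully threading together parts (a), (b), (c) of Proposition~\ref{Psup_sub_implies} with subadditivity, whereas the first case and the existence step are immediate from the structural decomposition given by (b).
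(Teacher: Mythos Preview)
Your existence argument is correct and essentially coincides with the paper's: the decomposition of Proposition~\ref{Psup_sub_implies}~(b) together with $P_0(\omega_{j_i})=0$ yields $\overline P(A)=\overline P(\omega^+)$ (this is exactly the content of the chain~\eqref{A_omega+}). Your treatment of the case $\omega'\Rightarrow A$ is also correct.

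The genuine problem is the second case, $\omega'\wedge A=\emptyset$, which you rightly flag as ``the main obstacle''---but it is not an obstacle that can be overcome, because the global uniqueness you are trying to establish is in fact \emph{false}. Take $\PP=\{\omega_1,\omega_2,\omega_3\}$ with $P_0(\omega_1)=P_0(\omega_2)=\tfrac14$, $P_0(\omega_3)=\tfrac12$, and $b=4$, $c=-\tfrac12$ (so $a=-\tfrac52$; the HBM constraints~\eqref{strict_cond_HB} hold). Then $\overline P(\omega_1)=\overline P(\omega_2)=\tfrac12$, $\overline P(\omega_3)=1$, and $\overline P$ equals $1$ on every union of two or more atoms, so subadditivity holds throughout $\mathcal A(\PP)$. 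Yet for $A=\omega_1\in\mathcal E_{\overline P}$ there are \emph{two} atoms, $\omega_1$ and $\omega_2$, with $\overline P$-value equal to $\overline P(A)$. Your sketched route through part~(c) and the inequality $2\overline P(A)\ge 1$ cannot close, because $2\overline P(A)=1$ is perfectly compatible with $A\in\mathcal E_{\overline P}$.

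What the paper actually intends (and how the corollary is used later, e.g.\ in the proof of Proposition~\ref{characterise_coherence}) is the weaker assertion that the atom $\omega^+$ appearing in the decomposition of Proposition~\ref{Psup_sub_implies}~(b) is uniquely determined: it is the unique atom of $\PP\cap\mathcal E_{\overline P}$ that implies $A$. This follows immediately from the $m=1$ step in the proof of~(b), and the paper's one-line proof simply points back to that argument. Once you restrict to atoms implying $A$, your first case already settles uniqueness and the second case does not arise.
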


Although its proof is essentially contained in that of Proposition \ref{Psup_sub_implies}, Corollary \ref{unico_e} highlights an important hindrance of subadditivity, hence also of coherence, of $\overline P$ in the HBM. 

In fact, if $\overline P$ is subadditive (or coherent) in Definition \ref{HBM_mod}, then necessarily \emph{any} event $A$ whose upper probability is non-trivial ($\overline P(A)\in\,]0,1[$) is made of a certain number of atoms of $\PP$, \emph{only one} of which, $\omega^+$, has positive upper probability, and $\overline P(A)=\overline P(\omega^+)$. In other words, the remaining atoms are irrelevant for forming the upper probability $\overline P(A)$. Immediate follow-ups of Proposition \ref{Psup_sub_implies} $(b)$ and Corollary \ref{unico_e} are also, for a subadditive $\overline P$, putting $n\stackrel{\rm def}{=}|\PP|$:
\begin{itemize}
\item[$i)$] The number of distinct values of $\overline P$ on $\mathcal A(\PP)$ is at most $n+2$.
\item[$ii)$] If it is exactly $n+2$, these are the upper probabilities of $\emptyset, \Omega$, and of the $n$ atoms of $\PP$. If $A\in\mathcal A(\PP)\setminus (\PP\cup\{\emptyset\})$, then, since $\NNN_{\overline P}=\{\emptyset\}$, $\overline P(A)=1$, i.e., $\overline P$ is vacuous on all non-atomic events (because any such event is implied by at least two atoms in $\mathcal E_{\overline P}$, and Proposition \ref{Psup_sub_implies} $(b)$ applies). 
\end{itemize}
Note that $ii)$ is a more restrictive constraint than the maxitivity condition $\displaystyle \overline P(A)=$ $\displaystyle\max_{\omega \Rightarrow A}\overline P(\omega)$ satisfied by \emph{possibility measures}, which are already rather special kinds of coherent upper probabilities \cite[Section 4.6.1]{book}. This does not, however, imply that any coherent $\overline P$ in a HBM  is a possibility measure: in fact, possibilities require that there exists $\omega^*\in\PP$ such that $\overline P(\omega^*)=1$, while $\overline P$ mail fail to satisfy this condition.

A less immediate consequence of subadditivity is a steepening effect on the parameter $b$, implied by the following result.

\begin{Prop}
\label{cond_sub_HB}
In a HBM, let $\overline P:\mathcal A(\PP)\to\RR$ be subadditive. If $\PP$ is finite and $|\mathcal E_{\overline P}\cap \PP|\ge 2m>0$, then
\begin{equation}
\label{bound_b}
b>\max\bigg\{m,\max_{\substack{\omega_i,\omega_j\in\mathcal E_{\overline P}\\i\neq j}}\frac{1}{\overline P(\omega_i) + \overline P(\omega_j)}\bigg\}.
\end{equation}
\end{Prop}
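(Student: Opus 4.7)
The plan is to extract from Proposition \ref{Psup_sub_implies} and subadditivity the key structural fact that any two distinct atoms in $\mathcal E_{\overline P} \cap \PP$ combine to a universal event; both bounds in \eqref{bound_b} then follow by direct computation using the HBM parameter constraints.

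First I would establish the following claim: for distinct atoms $\omega_i, \omega_j \in \mathcal E_{\overline P} \cap \PP$, we must have $\omega_i \vee \omega_j \in \mathcal U_{\overline P}$. By monotonicity, $\overline P(\omega_i \vee \omega_j) \ge \overline P(\omega_i) > 0$, which rules out $\omega_i \vee \omega_j \in \mathcal N_{\overline P}$. The alternative $\omega_i \vee \omega_j \in \mathcal E_{\overline P}$ is ruled out by Proposition \ref{Psup_sub_implies}(b): the atomic decomposition of an essential event contains exactly one essential atom $\omega^+$ and the rest are null, whereas $\omega_i \vee \omega_j$ decomposes into the two atoms $\omega_i, \omega_j$, both of which are essential by assumption. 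Hence $\omega_i \vee \omega_j \in \mathcal U_{\overline P}$.

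From this claim, subadditivity yields $1 = \overline P(\omega_i \vee \omega_j) \le \overline P(\omega_i) + \overline P(\omega_j)$ for every such pair, so $1/(\overline P(\omega_i) + \overline P(\omega_j)) \le 1$. Combined with the HBM fact $b > 1$ from \eqref{sign_ab}, this gives $b > \max_{i \neq j} 1/(\overline P(\omega_i) + \overline P(\omega_j))$, the second term of the maximum in \eqref{bound_b}.

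For the bound $b > m$, I would pick $2m$ distinct atoms $\omega_1, \dots, \omega_{2m}$ from $\mathcal E_{\overline P} \cap \PP$ and pair them as $(\omega_{2k-1}, \omega_{2k})$, $k = 1, \dots, m$. Since each $\omega_{2k-1} \vee \omega_{2k}$ is universal, the characterisation \eqref{U} gives $P_0(\omega_{2k-1}) + P_0(\omega_{2k}) \ge (1-c)/b$. Summing over $k$ and using that the $2m$ atoms are pairwise disjoint in $\PP$ (so their $P_0$-masses sum to at most $1$), we obtain $m(1-c)/b \le 1$, i.e., $b \ge m(1-c)$. Since $c < 0$ in a HBM (recall $c = 1-(a+b)$ and $a+b > 1$ by \eqref{strict_cond_HB}), $1 - c > 1$, so $b > m$.

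The main obstacle is the structural step showing $\omega_i \vee \omega_j \in \mathcal U_{\overline P}$; once it is in place, the rest of the argument is straightforward arithmetic built on \eqref{strict_cond_HB}, \eqref{sign_ab}, and the characterisation \eqref{U}.
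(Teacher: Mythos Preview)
Your proof is correct and follows essentially the same route as the paper: both arguments hinge on the structural consequence of Proposition~\ref{Psup_sub_implies}(b) that the join of any two distinct essential atoms must be universal, then pair off $2m$ essential atoms and sum the resulting inequalities $P_0(\omega_{2k-1})+P_0(\omega_{2k})\ge (1-c)/b$ against $P_0(\Omega)=1$ to obtain $b>m$. The only cosmetic difference is in handling the inner maximum: the paper passes through $b>1/(P_0(\omega_i)+P_0(\omega_j))$, whereas you observe directly that subadditivity forces $\overline P(\omega_i)+\overline P(\omega_j)\ge 1$, so the inner maximum is at most $1<b$ by \eqref{sign_ab}; both arguments are equally valid.
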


Thus, a higher number of atoms in $\mathcal E_{\overline P}$ requires a larger $b$ in a subadditive $\overline P$. In Figure \ref{fig_HBM}, $2)$, this makes steeper the non-horizontal plot of $\overline P$ against $P_0$, a segment on the line $\overline P=bP_0 + c$. 
On its turn, a larger $b$ increases the width $-\frac{c}{b}=\frac{a+b-1}{b}$ of the lower barrier, which seems contradictory, since the barrier is in the risk-seeking area for $\overline P$. This fact is counterbalanced by the increase in the number of events of $\mathcal A(\PP)$ in the risk-averse area (because they are given $\overline P=1$).

The restrictions pointed out for a coherent $\overline P$ in the HBM all depend on its subadditivity. As a matter of fact, subadditivity turns out to be equivalent to coherence of $\overline P$, even with arbitrary (infinite) partitions $\PP$. 
\begin{Prop}
\label{characterise_coherence}
Let $\overline P:\mathcal A(\PP)\to\RR$ be the upper probability of a HBM, with $\PP$ an arbitrary (finite or not) partition. Then $\overline P$ is coherent if and only if it is subadditive.
\end{Prop}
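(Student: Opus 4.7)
My plan is as follows. Necessity of subadditivity is immediate from the general coherence bound \eqref{subadd}, so I concentrate on the reverse direction.

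First, I would reduce the general case to a finite partition. Any coherence test in Definition~\ref{def_upper_coherence} involves only finitely many events $A_0,A_1,\ldots,A_n$. Let $B_1,\ldots,B_N$ be the atoms of the Boolean algebra they generate; each $B_j\in\mathcal{A}(\PP)$, each $A_i$ is a union of some $B_j$'s, and $\overline G$ is constant on each $B_j$, so $\max_{\omega\in\Omega}\overline G=\max_{1\le j\le N}\overline G$. Defining $P'_0$ as the restriction of $P_0$ to $\mathcal{A}(\{B_1,\ldots,B_N\})$ and $\overline P'$ via formula \eqref{upper_HBM} with the same $a,b,c$, one obtains a HBM on the finite partition $\{B_1,\ldots,B_N\}$ which agrees with $\overline P$ on this sub-algebra and inherits subadditivity from $\overline P$. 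Hence it suffices to prove the statement when $\PP$ itself is finite.

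For finite $\PP$, I would apply the Envelope Theorem (Theorem~\ref{env_thm}~$(b)$) by building, for every $A\in\mathcal{A}(\PP)$, a probability $P_A\le\overline P$ with $P_A(A)=\overline P(A)$. The structural facts in Proposition~\ref{Psup_sub_implies} and Corollary~\ref{unico_e}, which already use subadditivity, drive the construction. For $A\in\EEE_{\overline P}$ write $A=\omega^+\vee\bigvee_i\omega_{j_i}$ with $\omega^+$ the unique essential atom in $A$ and $\omega_{j_i}\in\NNN_{\overline P}$ with $P_0(\omega_{j_i})=0$; inequality \eqref{+*}, together with $\overline P(\omega^*)=1$ when $\omega^*\in\UUU_{\overline P}$, yields some $\omega^*\ne\omega^+$ with $\overline P(\omega^+)+\overline P(\omega^*)\ge 1$, and I set $P_A(\omega^+)=\overline P(\omega^+)$, $P_A(\omega^*)=1-\overline P(\omega^+)$, and zero elsewhere. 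For $A\in\UUU_{\overline P}$ containing a universal atom $\omega^u$, I take $P_A=\delta_{\omega^u}$. For $A\in\UUU_{\overline P}$ with no universal atom, iterated subadditivity gives $\sum_{\omega\in A\cap\EEE_{\overline P}}\overline P(\omega)\ge\overline P(A)=1$, so I may choose any $P_A$ supported on $A\cap\EEE_{\overline P}$ with $P_A(\omega)\le\overline P(\omega)$ and total mass $1$. Events in $\NNN_{\overline P}$ are handled automatically once $\mathcal{M}\ne\emptyset$.

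The main obstacle, in each construction, is verifying $P_A\le\overline P$ on \emph{every} $B\in\mathcal{A}(\PP)$, not merely on atoms. Here the rigid ``unique essential atom'' description of $\EEE_{\overline P}$-events (Proposition~\ref{Psup_sub_implies}~$(b)$ and Corollary~\ref{unico_e}) is decisive. If $B\in\NNN_{\overline P}$, then $B$ is a union of null atoms and $P_A(B)=0$; if $B\in\EEE_{\overline P}$, it has a unique essential atom $\omega^{++}$ and otherwise only $P_0$-null atoms (no universal atom can belong, else $\overline P(B)=1$), so $P_A(B)$ collapses to a single atomic mass that is easily bounded by $\overline P(\omega^{++})=\overline P(B)$; and $B\in\UUU_{\overline P}$ gives $P_A(B)\le 1=\overline P(B)$ trivially. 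I expect the subtle part to be the bookkeeping that checks the three sub-constructions against this trichotomy, especially that the auxiliary $\omega^*$ introduced for $A\in\EEE_{\overline P}$ does not violate domination when it turns up as the essential atom of some $B\ne A$.
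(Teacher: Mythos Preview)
Your proposal is correct and follows essentially the same route as the paper's proof: reduction to a finite partition via the generated sub-algebra, then an Envelope-Theorem construction of dominating probabilities $P_A$ driven by Proposition~\ref{Psup_sub_implies} and Corollary~\ref{unico_e}, with the same trichotomy on $\NNN_{\overline P},\EEE_{\overline P},\UUU_{\overline P}$ for the verification of $P_A\le\overline P$. The only cosmetic differences are the order (the paper treats the finite case first) and that, for $A\in\UUU_{\overline P}$ without a universal atom, the paper concentrates $P_A$ on exactly two essential atoms $\omega',\omega''$ of $A$ rather than allowing an arbitrary dominated $P_A$ supported on $A\cap\EEE_{\overline P}$; both choices work for the same reason, namely that any $B\in\EEE_{\overline P}$ contains a single essential atom.
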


%As anticipated, the result of Proposition \ref{characterise_coherence} extends to HBMs on arbitrary partitions:
%
%\begin{Prop}
%\label{characterise_coherence_gen}
%In any HBM, $\overline P:\mathcal A(\PP)\to\RR$ is coherent if and only if it is subadditive.
%\end{Prop}

It is easy to check that $\overline P$ is subadditive if and only if its conjugate $\PPP$ satisfies property \eqref{quasi_superadd}. Thus Proposition \ref{characterise_coherence} has the following, slightly less straightforward, correspondent for lower probabilities:

\begin{Prop}
\label{characterise_lower_coherence_gen}
A lower probability $\PPP:\mathcal A(\PP)\to\RR$ in a HBM is coherent if and only if $\PPP(A) + \PPP(B)\le 1+ \PPP(A\wedge B)$, $\forall A,B\in \mathcal A(\PP)$.
\end{Prop}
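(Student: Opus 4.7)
The plan is to reduce this result to Proposition~\ref{characterise_coherence} via conjugacy, leveraging the remark in the paper that a lower probability $\PPP$ and its conjugate $\overline P$ share the same consistency properties (coherent iff coherent, $2$-coherent iff $2$-coherent, etc.). Given a NL lower probability $\PPP$ in a HBM, its conjugate $\overline P$ (defined on $\mathcal A(\PP)$, which is closed under negation) is the upper probability of the same HBM (by Proposition~\ref{conjugate} combined with Definition~\ref{HBM_mod}). Hence $\PPP$ is coherent if and only if $\overline P$ is coherent, and by Proposition~\ref{characterise_coherence} the latter is equivalent to subadditivity of $\overline P$.

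The next step, which is the only non-trivial bookkeeping in the argument, is to translate condition \eqref{subadd} for $\overline P$ into the claimed inequality \eqref{quasi_superadd} for $\PPP$. Using $\overline P(X)=1-\PPP(\neg X)$ for every $X\in\mathcal A(\PP)$, the inequality $\overline P(A\vee B)\le\overline P(A)+\overline P(B)$ becomes
\[
1-\PPP\bigl(\neg A\wedge\neg B\bigr)\le\bigl(1-\PPP(\neg A)\bigr)+\bigl(1-\PPP(\neg B)\bigr),
\]
which rearranges to $\PPP(\neg A)+\PPP(\neg B)\le 1+\PPP(\neg A\wedge\neg B)$. Since $\mathcal A(\PP)$ is closed under negation, letting $A':=\neg A$, $B':=\neg B$ range over all of $\mathcal A(\PP)$, this is precisely the stated condition $\PPP(A')+\PPP(B')\le 1+\PPP(A'\wedge B')$ for every $A',B'\in\mathcal A(\PP)$. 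The converse rewriting is identical (the conjugacy map is an involution), so the two properties are genuinely equivalent.

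Combining the two steps: $\PPP$ coherent $\iff$ $\overline P$ coherent $\iff$ $\overline P$ subadditive $\iff$ $\PPP$ satisfies the claimed inequality. No real obstacle is expected; the only care needed is to invoke that $\mathcal A(\PP)$ is negation-invariant so that the conjugate pair lives on a common domain on which universal quantifiers can be freely translated, and to note that Proposition~\ref{characterise_coherence} applies for an arbitrary partition $\PP$, so the present result inherits the same generality.
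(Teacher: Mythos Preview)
Your proposal is correct and follows exactly the approach indicated in the paper: the paragraph preceding Proposition~\ref{characterise_lower_coherence_gen} already notes that subadditivity of $\overline P$ is equivalent to condition~\eqref{quasi_superadd} for its conjugate $\PPP$, so the result follows immediately from Proposition~\ref{characterise_coherence} via conjugacy, which is precisely what you do.
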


Interestingly, even though superadditivity is not, so to say, the conjugate property for lower probabilities of subadditivity, it nevertheless holds for any lower probability in the HBM, even those that are 2-coherent but not coherent:

\begin{Prop}
\label{Pinf_HB_super}
Given a HBM, its lower probability $\PPP$ is superadditive, i.e., satisfies \eqref{superadd}.
\end{Prop}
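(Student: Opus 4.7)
The plan is to do a direct case analysis on the pair $(\PPP(A),\PPP(B))$ according to which of the three regions each value lies in: it is $0$, lies strictly in $(0,1)$, or equals $1$. By \eqref{lower_HBM}, $\PPP$ is just the affine function $bP_0 + a$ clipped to $[0,1]$, so in each region I have an exact formula. I will repeatedly use two HBM facts: $P_0$ is additive on disjoint events, so $P_0(A\vee B)=P_0(A)+P_0(B)$; and from \eqref{strict_cond_HB}--\eqref{sign_ab}, $a<0$ and $b+2a\le 1$.

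When either $\PPP(A)=0$ or $\PPP(B)=0$, superadditivity follows from monotonicity of $\PPP$ (Lemma \ref{lemma_capacity}). When both $\PPP(A),\PPP(B)\in (0,1)$, the formula gives $\PPP(A)+\PPP(B)=bP_0(A\vee B)+2a$, and I need $\PPP(A\vee B)\ge bP_0(A\vee B)+2a$. If $bP_0(A\vee B)+a\ge 1$ then $\PPP(A\vee B)=1$ and the inequality becomes $bP_0(A\vee B)+2a\le 1$, which holds because $P_0(A\vee B)\le 1$ and $b+2a\le 1$. If instead $bP_0(A\vee B)+a\in[0,1]$, then $\PPP(A\vee B)=bP_0(A\vee B)+a$ and the inequality reduces to $a\le 0$, which holds in the HBM. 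The subcase $bP_0(A\vee B)+a<0$ is vacuous because $bP_0(A\vee B)+a\ge bP_0(A)+a=\PPP(A)>0$.

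The delicate case, and the main obstacle, is when (say) $\PPP(A)=1$: then $\PPP(A)+\PPP(B)=1+\PPP(B)$, and since $\PPP(A\vee B)\le 1$, superadditivity forces $\PPP(B)=0$. I will handle this by showing that in the HBM, disjointness of $A$ and $B$ combined with $\PPP(A)=1$ automatically pushes $P_0(B)$ below the threshold $-a/b$ that appears in \eqref{N}. Indeed, if $\PPP(A)=1$ via the clipping (i.e., $A\ne\Omega$), then $P_0(A)\ge (1-a)/b$, so by disjointness $P_0(B)\le 1-P_0(A)\le (a+b-1)/b$. The condition $P_0(B)\le -a/b$ is then equivalent to $a+b-1\le -a$, i.e., $b+2a\le 1$, which is exactly the defining inequality of the HBM. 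Hence $\PPP(B)=0$ by \eqref{N}, as required. The case $A=\Omega$ is trivial since then $B=\emptyset$.

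Putting these pieces together exhausts all possibilities for $(\PPP(A),\PPP(B))$, and in every case $\PPP(A\vee B)\ge \PPP(A)+\PPP(B)$, proving \eqref{superadd}. I expect the argument to be short, the only non-mechanical step being the recognition that the $\PPP(A)=1$ case is where the parameter constraint $b+2a\le 1$ becomes tight and is actually used as an equivalence (rather than merely as an inequality ensuring 2-coherence).
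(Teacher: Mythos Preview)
Your proof is correct and follows essentially the same case analysis as the paper's proof: both split according to whether $\PPP(A),\PPP(B)$ lie in $\NNN_{\PPP}$, $\EEE_{\PPP}$, or $\UUU_{\PPP}$, handle the null case by monotonicity, and compute directly in the essential case using $a<0$ and $b+2a\le 1$. The only minor difference is in the $\PPP(A)=1$ case: the paper observes that $A\Rightarrow\neg B$ and invokes the already-established 2-coherence inequality $\PPP(B)+\PPP(\neg B)\le 1$ to rule out $\PPP(B)>0$, whereas you unwind this into a direct parameter calculation showing $P_0(B)\le -a/b$; these are the same argument in different dress.
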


It is easy to see that the conjugate $\overline P$ of a superadditive $\PPP$ satisfies the following property \cite[Section 2.7.4, $(f)$]{W}:
\begin{equation}
\label{superadd_P sup}
\overline P(A) + \overline P(B)\ge 1+\overline P(A\wedge  B), \quad \text{if }A\vee B=\Omega.
\end{equation}
Therefore, the result corresponding to Proposition \ref{Pinf_HB_super} holds:
\begin{Prop}
In a HBM, its upper probability $\overline P$ satisfies property \eqref{superadd_P sup}.
\end{Prop}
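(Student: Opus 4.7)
The plan is to derive this from the previous proposition (superadditivity of $\PPP$ in a HBM) by a straightforward conjugacy argument, since, as the authors indicate in the sentence just before the statement, the property \eqref{superadd_P sup} for $\overline P$ is exactly the conjugate translation of superadditivity \eqref{superadd} for $\PPP$.

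First I would fix $A,B\in\mathcal A(\PP)$ with $A\vee B=\Omega$ and switch to the complementary events $\neg A,\neg B$. De Morgan gives $\neg A\wedge \neg B=\neg(A\vee B)=\emptyset$, so the hypothesis of superadditivity is met for the pair $(\neg A,\neg B)$. Applying the previous proposition (superadditivity of $\PPP$) yields
\[
\PPP(\neg A\vee\neg B)\ge \PPP(\neg A)+\PPP(\neg B),
\]
and using $\neg A\vee\neg B=\neg(A\wedge B)$ this becomes
\[
\PPP(\neg(A\wedge B))\ge \PPP(\neg A)+\PPP(\neg B).
\]

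Then I would invoke the conjugacy relation \eqref{conju}, which holds by Definition \ref{HBM_mod}, to rewrite every $\PPP(\neg X)$ as $1-\overline P(X)$. Substituting gives
\[
1-\overline P(A\wedge B)\ge \bigl(1-\overline P(A)\bigr)+\bigl(1-\overline P(B)\bigr),
\]
and rearranging yields exactly \eqref{superadd_P sup}.

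There is essentially no obstacle here: the only thing to be careful about is that conjugacy \eqref{conju} is indeed available for HBMs (it is, by Definition \ref{HBM_mod}) and that the previous proposition can be applied to the pair $(\neg A,\neg B)$, which only requires the disjointness $\neg A\wedge \neg B=\emptyset$ guaranteed by the assumption $A\vee B=\Omega$. No additional case analysis on the NL parameters $a,b,c$ is needed, since the superadditivity of $\PPP$ already encapsulates the parameter constraints \eqref{strict_cond_HB} of the HBM.
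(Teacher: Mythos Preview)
Your argument is correct and is exactly the conjugacy derivation the paper has in mind: the text immediately preceding the proposition says that \eqref{superadd_P sup} for $\overline P$ is the conjugate of superadditivity \eqref{superadd} for $\PPP$, and the proposition is stated as an immediate consequence of Proposition~\ref{Pinf_HB_super} without a separate proof in the Appendix. Your De~Morgan/conjugacy computation spells this out explicitly and matches the paper's reasoning.
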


Finally, we prove the following remarkable result for coherent lower/upper probabilities in the HBM:

\begin{Prop}
\label{HBM_coh_implies_alter}
If, in a HBM, $\overline P$ ($\PPP$) is coherent, then it is 2-alternating (2-monotone).
\end{Prop}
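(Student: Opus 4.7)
My plan is to prove the 2-alternating inequality for $\overline P$; the companion statement for $\PPP$ follows immediately by conjugacy, since 2-alternating for $\overline P$ and 2-monotone for $\PPP$ are conjugate properties (a short routine calculation substituting $\neg A, \neg B$ into one version, using \eqref{conju}, recovers the other). The coherence hypothesis will enter only through Proposition \ref{characterise_coherence}, which promotes it to subadditivity. Setting $f(x) = \min\{\max\{bx + c, 0\}, 1\}$ so that $\overline P(X) = f(P_0(X))$, and writing $p = P_0(A)$, $q = P_0(B)$, $r = P_0(A \wedge B)$, $s = P_0(A \vee B) = p + q - r$, the goal becomes the pointwise inequality $f(s) \leq f(p) + f(q) - f(r)$.

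I would proceed by case analysis on the regime of $f(r)$ and $f(s)$ determined by the three zones of $f$ (clipped to $0$, linear, clipped to $1$). If $f(r) = 1$ then $r \geq (1-c)/b$, so $p, q \geq r$ forces $f(p) = f(q) = 1$ by monotonicity, and the inequality reduces to $f(s) \leq 1$. If $f(r) = 0 = f(s)$, the right-hand side is non-negative. In the middle regime $f(r) = br + c \in (0,1)$, the estimate $r > -c/b$ together with $p, q \geq r$ gives $f(p), f(q) > 0$; if additionally $f(s) = bs + c \in (0,1)$, then $s < (1-c)/b$ combined with $p, q \leq s$ forces $f(p) = bp + c$ and $f(q) = bq + c$, and the inequality collapses to the algebraic identity $bs + c = b(p + q - r) + c$; if $f(s) = 1$, a short subcase split according to whether $f(p)$ and $f(q)$ are saturated at $1$ or still in the linear region yields the bound, using $s \geq (1-c)/b$ in the subcase where both are linear.

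The only case that genuinely requires the coherence hypothesis is $f(r) = 0$ with $f(s) > 0$: there the inequality to be proved reads $f(s) \leq f(p) + f(q)$, which is exactly the subadditivity of $\overline P$ applied to $A$ and $B$. I expect this case to carry the real content of the proposition, while the remaining cases reduce to piecewise bookkeeping on $f$. A virtue of this pointwise approach is that the argument works for arbitrary (possibly infinite) partitions $\PP$, since it does not invoke the finite-partition structural results of Proposition \ref{Psup_sub_implies} and Corollary \ref{unico_e}; those could provide an alternative, more combinatorial proof in the finite case, but appear heavier here.
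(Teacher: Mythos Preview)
Your argument is correct and takes a genuinely different route from the paper's. The paper argues combinatorially: it defines $\mathcal P^E = \{\omega \in \PP : \omega \Rightarrow E,\ \overline P(\omega) > 0\}$ and splits into cases on $|\mathcal P^{A\vee B}|$ and $|\mathcal P^{A\wedge B}|$, invoking Proposition~\ref{Psup_sub_implies}~$(b)$ and Corollary~\ref{unico_e} to either collapse $\overline P$ to the value on a single atom or to force $\overline P = 1$. Your approach is instead purely analytic on the piecewise-linear function $f$, with no reference to atoms at all. Both proofs use coherence only through subadditivity (via Proposition~\ref{characterise_coherence}), and in both the substantive step is the case $\overline P(A\wedge B) = 0 < \overline P(A\vee B)$. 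What your approach buys is directness and generality: because $c<0$ and $b+c>1$ in a HBM, the formula $\overline P(X)=f(P_0(X))$ holds even at $\emptyset,\Omega$, so the pointwise argument runs verbatim for arbitrary (possibly infinite) $\PP$, with no need for the finite-generated-partition reduction. What the paper's approach buys is a more structural reading of the inequality, tying it explicitly to the atom-counting picture of coherent HBM upper probabilities developed in Section~\ref{coherent_uplow_HB}.
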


\subsection{Precise probabilities within HBMs}
\label{section_precise_HBM}

The HBM is the only NL submodel offering a certain variety of precise probabilities as its special cases. In Example \ref{ex_HB_precise}, the assumptions of Proposition \ref{equiv_sotto=sopra} $(b)$ are satisfied, and $P$ is in fact 0-1 valued. Furthermore, it is also a precise probability.

If $P_0$ is 0-1 valued, the HBM transforms $P_0$ into itself, i.e., $\PPP=\overline P=P_0$. This ensues from Proposition \ref{link_HB_P0}: taking for instance $\PPP$, from $jj)$, $jjj)$ we have $\mathcal A(\PP)=\NNN_{P_0}\cup \UUU_{P_0}\subset \NNN_{\PPP}\cup \UUU_{\PPP}$, implying $\EEE_{\PPP}=\emptyset$ (note that with the VBM instead, in general the inclusion $\NNN_{P_0}\subset \NNN_{\PPP}$ is false). 

We remark that there are 0-1 valued lower/upper probabilities in the HBM that are not precise probabilities, when $b+2a\neq 1$, and that may or may not be coherent, as in the next example. 

\begin{Ex}
\label{01_not_coherent}
Let $\PP=\{\omega_1,\omega_2,\omega_3\}$. In Table \ref{table_01_not_coherent}, $\PPP=\overline P$ is obtained from the uniform probability $P_0$ by \eqref{lower_HBM}, \eqref{upper_HBM}, with $a=-4,\, b=8.5$. $\overline P$ (hence $\PPP$) is not coherent by Proposition \ref{characterise_coherence}, being not subadditive: $\overline P(\omega_1)+\overline P(\omega_2)<\overline P(\omega_1\vee\omega_2)$.

In the same Table \ref{table_01_not_coherent}, $\PPP'$ and its conjugate $\overline P'$ are given by \eqref{lower_HBM}, \eqref{upper_HBM} from $P_0'$, with $a=-4,\,b=6$. Using Propositions \ref{characterise_coherence} or \ref{characterise_lower_coherence_gen}, it may be checked that $\PPP',\,\overline P'$ are coherent (but they are not probability measures).

%\centering{
\begin{table}[htbp!]
\begin{center}
\begin{tabular}{c|c|c|c|c|c|c|c|c}
&&&&&&&\\[-1em] & $\omega_1$ & $\omega_2$ & $\omega_3$ & $\omega_1\vee\omega_2$ & $\omega_1\vee\omega_3$ & $\omega_2\vee\omega_3$ & $\emptyset$ & $\Omega$\\
&&&&&&&\\[-1em] \hline &&&&&&&\\[-1em]
$P_0$ & $\frac{1}{3}$ & $\frac{1}{3}$  & $\frac{1}{3}$ & $\frac{2}{3}$  & $\frac{2}{3}$ & $\frac{2}{3}$ & 0 & 1\\
&&&&&&&\\[-1em] \hline &&&&&&&\\[-1em]
$\PPP=\overline P$ & 0 & 0 & 0 & 1 & 1 & 1 & 0 & 1\\
&&&&&&&\\[-1em] \hline &&&&&&&\\[-1em]
$P_0'$ & $\frac{1}{2}$ & $\frac{29}{60}$  & $\frac{1}{60}$ & $\frac{59}{60}$  & $\frac{31}{60}$ & $\frac{1}{2}$ & 0 & 1\\
&&&&&&&\\[-1em] \hline &&&&&&&\\[-1em]
$\PPP'$ & 0 & 0 & 0 & 1 & 0 & 0 & 0 & 1\\
&&&&&&&\\[-1em]\hline &&&&&&&\\[-1em]
$\overline P'$ & 1 & 1 & 0 & 1 & 1 & 1 & 0 & 1
\end{tabular}
\caption{Values for Example \ref{01_not_coherent}.}
\label{table_01_not_coherent}
\end{center}
\end{table}
%}
\end{Ex}

There are also precise probabilities within the HBM that are not necessarily 0-1 valued, if  $b+2a=1$. These measures must satisfy severe constraints. 

A general question is when $\PPP,\overline P$ in a HBM coincide and are a precise probability. The next result gives an answer in the finite case, showing explicitly how special this situation is. The later Proposition \ref{N+1} is more general, but less meaningful in letting us evaluate the rarity of the precise probabilities.

\begin{Prop}
\label{prop_N}
Let $\PPP,\overline P$ in a HBM  be coherent on $\mathcal A(\PP)$, $\PP$ a finite partition. Then $\PPP=\overline P=P(\neq P_0)$, $P$ probability, if and only if one of the following holds:
\begin{itemize}
    \item[$(a)$] $\exists \omega^+\in\PP:$ $\PPP(\omega^+)=\overline P(\omega^+)=1$, and $\PPP(\omega)=\overline P(\omega)=0$, $\forall \omega\in\PP\setminus\{\omega^+\}$;
    \item[$(b)$] $a=c<0$, and $|\mathcal E_P\cap \PP|=2$.
    %$\forall \omega\in\PP$, $P()$
\end{itemize}
\end{Prop}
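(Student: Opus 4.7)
The plan is to split the argument by whether $b+2a=1$ (equivalently $a=c$) and prove necessity and sufficiency separately, leveraging Propositions \ref{equiv_sotto=sopra}, \ref{Pinf_HB_super}, \ref{characterise_coherence}, \ref{Psup_sub_implies} and Corollary \ref{unico_e}.

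For necessity, suppose $\PPP=\overline P=P$ is a probability. If $b+2a\ne 1$, Proposition \ref{equiv_sotto=sopra}$(b)$ makes $P$ $0$-$1$-valued; finiteness of $\PP$ and additivity then force $P$ to be concentrated on a single atom $\omega^+$, giving case $(a)$. If instead $b+2a=1$, I partition $\PP$ into null, essential and universal atoms. Superadditivity of $\PPP$ (Proposition \ref{Pinf_HB_super}) combined with $P\le 1$ shows that if any atom is universal, every other atom must be null, which is case $(a)$. Otherwise $\mathcal U_P\cap\PP=\emptyset$. For distinct essential atoms $\omega^+_i,\omega^+_j$, Corollary \ref{unico_e} forbids $\omega^+_i\vee\omega^+_j\in\mathcal E_P$, so this disjunction lies in $\mathcal U_P$ (it is not null, by monotonicity), whence additivity gives $P(\omega^+_i)+P(\omega^+_j)=1$ for all such pairs. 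With three or more essential atoms this system forces each $P(\omega^+_i)=1/2$ and simultaneously $P(\omega^+_1\vee\omega^+_2\vee\omega^+_3)=1$, a contradiction; with one or zero essential atoms the atomic $P$-values sum to less than $1$, contradicting $P(\Omega)=1$. Hence $|\mathcal E_P\cap\PP|=2$, and since $a<0$ in any HBM by \eqref{sign_ab}, this is case $(b)$.

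For sufficiency, case $(a)$ is immediate: the given atomic values together with Proposition \ref{Psup_sub_implies}$(a)$ (applied via conjugacy) make $P$ Dirac on $\omega^+$, trivially a probability with $P=\PPP=\overline P$. Case $(b)$ is more delicate. From $a=c$ one has $b+2a=1$, and Proposition \ref{equiv_sotto=sopra}$(a)$ yields $\PPP=\overline P=:P$. Coherence is equivalent to subadditivity of $\overline P$ by Proposition \ref{characterise_coherence}, so Proposition \ref{Psup_sub_implies} applies. The key step is to show that every null atom $\omega$ satisfies $P_0(\omega)=0$: for an essential atom $\omega^+_i$, the event $\omega^+_i\vee\omega$ is not null by part $(a)$, and cannot be universal either—else subadditivity would give $1\le\overline P(\omega^+_i)+\overline P(\omega)=\overline P(\omega^+_i)<1$—so $\omega^+_i\vee\omega\in\mathcal E_P$, and part $(b)$ of the same proposition forces $P_0(\omega)=0$. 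Consequently $P_0(\omega^+_1)+P_0(\omega^+_2)=1$. Additivity of $P$ then reduces to a short case-check on disjoint pairs $A_1,A_2$: null atoms contribute zero on both sides, and the only non-trivial case is $A_1\supseteq\omega^+_1$, $A_2\supseteq\omega^+_2$, where $P(A_1)+P(A_2)=b(P_0(\omega^+_1)+P_0(\omega^+_2))+2a=b+2a=1=P(A_1\vee A_2)$ exactly by the equality $a=c$.

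The principal difficulty is the sufficiency of $(b)$: one must first extract from coherence alone that every null atom carries zero $P_0$-mass, so that the NL transform degenerates into a genuinely additive combination, and then the balance $b+2a=1$ is precisely what makes additivity close up on the pair of essential atoms. On the necessity side, the matching combinatorial obstacle is the bound $|\mathcal E_P\cap\PP|\le 2$ obtained via Corollary \ref{unico_e}, which is what prevents richer probability structures from existing inside a HBM.
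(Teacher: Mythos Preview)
Your necessity argument is correct and close in spirit to the paper's, though you split on whether $b+2a=1$ whereas the paper splits on whether some atom has $P$-value~$1$; the paper's route is marginally shorter because it reads off $a=c$ directly from $\PPP(\omega^*)=bP_0(\omega^*)+a=\overline P(\omega^*)=bP_0(\omega^*)+c$ at any essential atom, rather than carrying $b+2a=1$ as a case hypothesis. Your treatment of the bound $|\mathcal E_P\cap\PP|=2$ via Corollary~\ref{unico_e} and additivity is sound and slightly more explicit than the paper's.

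Your sufficiency argument for $(b)$, however, does far more work than needed and contains a small gap. The paper dispatches this case in one line: once $a=c$ gives $\PPP=\overline P=:P$ via Proposition~\ref{equiv_sotto=sopra}$(a)$, conjugacy forces $P(A)+P(\neg A)=1$ for every $A$, and since $P$ is coherent (hence avoids sure loss), Lemma~\ref{zero} with Remark~\ref{rem_zero} immediately yields that $P$ is a precise probability. No structural analysis of atoms is required, and the hypothesis $|\mathcal E_P\cap\PP|=2$ is not even used in this direction. Your route---showing null atoms carry zero $P_0$-mass and then checking additivity by cases---can be made to work, but when you conclude $P_0(\omega^+_1)+P_0(\omega^+_2)=1$ you tacitly assume there are no universal atoms in $\PP$; this needs an extra line (superadditivity of $\PPP$ from Proposition~\ref{Pinf_HB_super} together with an essential atom rules them out, exactly as in your necessity argument). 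More to the point, your closing claim that sufficiency of $(b)$ is the ``principal difficulty'' inverts the situation: it is the trivial half, and the genuine content lies in the necessity bound $|\mathcal E_P\cap\PP|\le 2$, which you handle correctly.
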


It is interesting to point out that while a coherent $\overline P$ in a HBM may assume at most $|\PP|+2$ distinct values on $\mathcal A(\PP)$, as seen in Section \ref{coherent_uplow_HB}, Proposition \ref{prop_N} implies that $\overline P$, as a probability, may take at most 4 values. This gives a clear idea of how restrictive precision is within HBMs.

In Example \ref{ex_HB_precise} we have already encountered a HBM  probability $P$ meeting case $(a)$ of Proposition \ref{prop_N}. In the next example $P$ satisfies case $(b)$ of the same Proposition \ref{prop_N}.

%Just think for this that any such $P$ is simultaneously a coherent lower and upper probability, so both Propositions \ref{characterise_coherence_gen} and \ref{characterise_lower_coherence_gen} apply. 
%In practice, if $a,b$ such that $b+2a=1$ are chosen without any special care, it may be expected that the resulting $P$ is not a precise probability. The next example gives an idea of this.
% Since by Lemma \ref{equiv_sotto=sopra} $\PPP=\overline P$ when $b+2a=1$, a choice not allowed by Proposition \ref{only01}, one may wonder whether there are precise probabilities within the HBMs that are not necessarily 0-1 valued if we assume $b+2a=1$. While this is actually true, these measures must satisfy severe constraints to be dF-coherent. Just think for this that any such $P$ is simultaneously a coherent lower and upper probability, so both Propositions \ref{characterise_coherence_gen} and \ref{characterise_lower_coherence_gen} apply. In practice, if $a,b$ such that $b+2a=1$ are chosen without any special care, it may be expected that the resulting $P$ is not a precise probability. The next example gives an idea of this.

\begin{Ex}
\label{dF}
Let again $\PP=\{\omega_1,\omega_2,\omega_3\}$. Take $a=-0.125,\,b=1.25$ (hence $b+2a=1$), and define 
$$
P_0(\omega_1)=0.3,\quad P_0(\omega_2)=0.7,\quad P_0(\omega_3)=0.
$$
Using \eqref{lower_HBM} or \eqref{upper_HBM}, we obtain $(\PPP=\overline P=)P$ given by:
\begin{align*}
P(\omega_1) =P(\omega_1\vee\omega_3) & =0.25, &  %\quad 
P(\omega_2) =P(\omega_2\vee\omega_3) & =0.75, \\ %\quad 
P(\omega_1\vee\omega_2) & =P(\Omega)=1, & 
P(\omega_3) & = 0.
\end{align*}
$P$ is a precise probability.
%This example was devised recalling that if $P$ is a probability, then by Proposition \ref{Psup_sub_implies} $(b)$ $A,B\in\mathcal E_{P}, \, A\wedge B=\emptyset \Rightarrow A\vee B\in\mathcal U_P$. Further, $A\in\mathcal E_P$ iff $P_0(A)$ belongs to the open interval $]\frac{1}{2}-\frac{1}{2b},\frac{1}{2}+\frac{1}{2b}[$ with range $\frac{1}{b}$ (by Equation \eqref{E} with $a=\frac{1-b}{2}$). Hence $P_0(A)=\frac{1}{2}-\frac{1}{2b}+\varepsilon_A$, with $\varepsilon_A\in\,]0,\frac{1}{b}[$, and $P(A)=b\varepsilon_A$ (Equation \eqref{lower_HBM}). Thus the requirement $A\vee B\in\mathcal U_P$, together with $A\wedge B=\emptyset$, implies $b\varepsilon_A+b\varepsilon_B=1$, or 
%\begin{equation}
%\label{var}
%\varepsilon_A+\varepsilon_B=\frac{1}{b}
%\end{equation}
%which holds in this example for $A=\omega_1,\, B=\omega_2, \, \varepsilon_A=0.2, \, \varepsilon_B=0.6$. 

%Observe that since \eqref{var} applies  to any $A,B\in\mathcal E_P$ as a necessary condition for dF-coherence, it also implies the stronger dF-coherence condition: on the events in $\mathcal E_P$, the probability $P_0$ may take up at most two distinct values.  
\end{Ex}

\begin{Prop}
\label{N+1}
In a HBM, it is $\PPP=\overline P=P$, $P$ a probability measure, if and only if, $\forall A,B\in\mathcal A(\PP)$,
\begin{itemize}
    \item[$(a)$] $\PPP(A\vee B)\le \PPP(A) + \PPP(B)$;
    \item[$(b)$] $\PPP(A)+\PPP(B)\le 1 + \PPP(A\wedge B)$.
\end{itemize}
\end{Prop}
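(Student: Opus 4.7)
The plan is to treat necessity and sufficiency separately, leveraging two HBM-specific facts: Proposition \ref{Pinf_HB_super} (superadditivity of $\PPP$ on disjoint events) and Proposition \ref{characterise_lower_coherence_gen} (coherence $\Leftrightarrow$ property $(b)$).

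For necessity, I assume $\PPP = \overline{P} = P$ is a probability. Both $(a)$ and $(b)$ then follow at once from the inclusion--exclusion identity $P(A) + P(B) = P(A \vee B) + P(A \wedge B)$: using $P(A \wedge B) \ge 0$ gives $(a)$, and using $P(A \vee B) \le 1$ gives $(b)$. These are one-line calculations.

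For sufficiency, I assume $(a)$ and $(b)$. The pivotal observation is that $(a)$ specialised to disjoint $A,B$ states $\PPP(A \vee B) \le \PPP(A) + \PPP(B)$, while Proposition \ref{Pinf_HB_super} supplies the reverse inequality for such pairs; hence $\PPP$ is finitely additive on $\mathcal{A}(\PP)$. Since $\PPP(\emptyset) = 0$ and $\PPP(\Omega) = 1$ by Definition \ref{NL_IP}, additivity applied to the partition $\{A, \neg A\}$ yields $\PPP(A) + \PPP(\neg A) = 1$ for every $A$. Meanwhile, $(b)$ together with Proposition \ref{characterise_lower_coherence_gen} makes $\PPP$ coherent, so it avoids sure loss, and Lemma \ref{zero} then certifies that $\PPP$ is a (dF-coherent) probability $P$. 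Conjugacy finally delivers $\overline{P}(A) = 1 - \PPP(\neg A) = \PPP(A)$, so $\PPP = \overline{P} = P$ as claimed. The argument is essentially bookkeeping with no substantive obstacle; the only point needing care is checking that the blanket HBM assumption does validate the hypotheses of the two prior results invoked (superadditivity and the coherence characterisation).
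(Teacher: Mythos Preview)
Your proof is correct. Both directions work as you describe, and your invocations of Proposition~\ref{Pinf_HB_super}, Proposition~\ref{characterise_lower_coherence_gen}, and Lemma~\ref{zero} are legitimate since the blanket HBM hypothesis is in force.

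The paper's argument is very close to yours but slightly more direct in the sufficiency part. Where you combine $(a)$ with Proposition~\ref{Pinf_HB_super} to obtain finite additivity on all disjoint pairs (and then specialise to $A,\neg A$), the paper instead applies $(a)$ and $(b)$ simultaneously to the single pair $(S,\neg S)$ to get the squeeze
\[
1=\PPP(\Omega)=\PPP(S\vee\neg S)\le \PPP(S)+\PPP(\neg S)\le 1+\PPP(S\wedge\neg S)=1,
\]
which immediately yields $\PPP(S)+\PPP(\neg S)=1$ without appealing to superadditivity. After that both proofs proceed identically: $(b)$ with Proposition~\ref{characterise_lower_coherence_gen} gives coherence, Lemma~\ref{zero} gives dF-coherence, and conjugacy gives $\PPP=\overline P$. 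Your detour through Proposition~\ref{Pinf_HB_super} buys you the stronger intermediate fact of full finite additivity, but this is not needed; the paper's squeeze is marginally more economical since it uses only the two hypotheses $(a)$ and $(b)$ and no auxiliary HBM property.
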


\section{Other Nearly-Linear Models}

We investigate in this section the third NL model arising from the classification of Table \ref{NLmodels}, and discuss an extreme model that can be accommodated into the framework of NL models. 

\subsection{The Restricted Range Model}
\label{sub_RRM}
A look at Table \ref{NLmodels} suggests that there is just one family of lower probabilities still missing in our analysis of NL models, identified by the parameter constraints of Case 6 (roughly: some strict inequalities might be relaxed, admitting the equality). Correspondingly, from the starting point of the PMM,  we have so far seen the cases $a+b\le 1$ and $a+b\ge 1$, while keeping the PMM constraint $a\le 0$. Thus, it remains to explore the case of a positive $a$. From these considerations, we introduce in this section $\mu$, a NL$(a,b)$ measure, with the constraints 
\begin{equation}
\label{ab_RRM}
b+2a\le 1, \quad a>0.
\end{equation}
In principle, $a=0$ could be allowed too. This corresponds to the lower probability of the $\varepsilon$-contamination model, included also in the VBM. We prefer to rule out this case, to avoid overlapping with the VBM.

From inequalities \eqref{ab_RRM}, it follows straightforwardly that
\begin{equation*}
%\label{ab2_RRM}
a+b< 1, \quad b<1.
\end{equation*}
Clearly, $\mu$ is a lower probability, and since $0<a\le bP_0(A) + a\le b+a<1$, Equation \eqref{def_mu_NL} 
simplifies to 
$$
\mu(A)=bP_0(A) + a, \quad \forall A\in \mathcal A(\PP)\setminus\{\emptyset,\Omega\}.
$$
Using \eqref{c} for the conjugate $\mu^c$, we have:
\begin{Def}
A \emph{Restricted Range Model (RRM)} is a NL Model with $\PPP$ and its conjugate $\overline P$ given by, $\forall A\in \mathcal A(\PP)\setminus\{\emptyset,\Omega\}$,%respectively, 
\begin{align}
\label{lower_RRM}
\PPP(A) & =bP_0(A) + a,\\
\label{upper_RRM}
\overline P(A) & = bP_0(A) +c,
\end{align}
with $a,b$ satisfying \eqref{ab_RRM}, $c$ given by \eqref{c}, and $\PPP(\emptyset)=\overline P(\emptyset)=0$, $\PPP(\Omega)=\overline P(\Omega)=1$.
\end{Def}

The consistency properties of a RRM  are easy to fix:

\begin{Prop}
\label{consistency_RRM}
Let $(\PPP,\overline P)$ be a RRM. Then $\PPP,\overline P$ are 2-coherent, and coherent iff $|\PP|=2$.
\end{Prop}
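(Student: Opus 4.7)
The 2-coherence part is immediate: since $b+2a\le 1$ by \eqref{ab_RRM}, Proposition~\ref{prop_2coherence} gives 2-coherence of $\PPP$, and then $\overline P$ inherits 2-coherence because the conjugate of a 2-coherent lower probability is 2-coherent (as recalled in Section~\ref{section_prel}).

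The substantive part is the equivalence between coherence and $|\PP|=2$. The plan is to handle the two directions separately, working with $\PPP$ (coherence of $\overline P$ follows by conjugacy).

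For the necessity direction, I will show that if $|\PP|\ge 3$, then $\PPP$ violates the superadditivity condition \eqref{superadd}, which is necessary for coherence. Pick any two distinct atoms $\omega_1,\omega_2\in\PP$ whose disjunction is different from $\Omega$ (possible because a third atom exists). Both $\omega_1$, $\omega_2$ and $\omega_1\vee\omega_2$ lie in $\mathcal A(\PP)\setminus\{\emptyset,\Omega\}$, so \eqref{lower_RRM} applies to each, and a direct computation yields
\[
\PPP(\omega_1)+\PPP(\omega_2)-\PPP(\omega_1\vee\omega_2)=bP_0(\omega_1)+a+bP_0(\omega_2)+a-bP_0(\omega_1\vee\omega_2)-a=a>0,
\]
contradicting \eqref{superadd}. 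Hence $\PPP$ is not coherent.

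For the sufficiency direction, suppose $|\PP|=2$, say $\PP=\{\omega_1,\omega_2\}$, so that $\mathcal A(\PP)=\{\emptyset,\omega_1,\omega_2,\Omega\}$. From \eqref{lower_RRM} and \eqref{ab_RRM},
\[
\PPP(\omega_1)+\PPP(\omega_2)=b\bigl(P_0(\omega_1)+P_0(\omega_2)\bigr)+2a=b+2a\le 1.
\]
I will then exhibit a dominating set of probabilities and invoke the Envelope Theorem~\ref{env_thm}$(a)$. Define probabilities $P_1,P_2$ on $\mathcal A(\PP)$ by $P_1(\omega_1)=\PPP(\omega_1)$, $P_1(\omega_2)=1-\PPP(\omega_1)$ and $P_2(\omega_2)=\PPP(\omega_2)$, $P_2(\omega_1)=1-\PPP(\omega_2)$. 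Since $\PPP(\omega_1)+\PPP(\omega_2)\le 1$, both $P_1(\omega_2)\ge \PPP(\omega_2)$ and $P_2(\omega_1)\ge \PPP(\omega_1)$ hold, so $\PPP(A)=\min\{P_1(A),P_2(A)\}$ for every $A\in\mathcal A(\PP)$ (the equalities on $\emptyset,\Omega$ being trivial). By Theorem~\ref{env_thm}$(a)$, $\PPP$ is coherent, and so is $\overline P$ by conjugacy.

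The only delicate step is the sufficiency direction, and even there the envelope argument is straightforward once one notices that $b+2a\le 1$ is exactly the condition needed to make $P_1$ and $P_2$ dominate $\PPP$ pointwise; the rest is bookkeeping.
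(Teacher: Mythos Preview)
Your proof is correct and follows essentially the same approach as the paper's: 2-coherence via Proposition~\ref{prop_2coherence}, failure of superadditivity \eqref{superadd} on two atoms when $|\PP|\ge 3$ (yielding the extra $a>0$), and for $|\PP|=2$ the explicit envelope $\{P_1,P_2\}$ built from $\PPP(\omega_1),\PPP(\omega_2)$ together with $b+2a\le 1$. The arguments coincide in structure and detail.
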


Thus, a RRM is 2-coherent, but typically not coherent. Proposition \ref{consistency_RRM} lets us deduce also the following interesting result. 

\begin{Prop}
	\label{coherence_implies_2monotonicity}
%Let $(\PPP,\overline P)$ be 
In a NL model, if $\PPP$ ($\overline P$) is coherent, then it is 2-monotone (2-alternating).
\end{Prop}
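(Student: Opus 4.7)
The plan is to case-split according to the three-way classification of NL models introduced in Table \ref{NLmodels}. Since every NL model is, by construction, either a VBM, a HBM, or a RRM, it suffices to handle each of these three families separately, and in fact two of them are already covered by earlier results.

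For the VBM case, Proposition \ref{VBM_coherent} already establishes unconditionally that $\PPP$ is 2-monotone and $\overline P$ is 2-alternating, so nothing further needs to be shown (the coherence hypothesis is in fact not used here). For the HBM case, the claim is precisely the content of Proposition \ref{HBM_coh_implies_alter}, which has already been proved, so no additional argument is needed either.

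The only case requiring genuine verification is the RRM. The key observation is that Proposition \ref{consistency_RRM} tells us that coherence of $\PPP$ (equivalently of $\overline P$) in a RRM forces $|\PP|=2$. Writing $\PP=\{\omega_1,\omega_2\}$, so that $\mathcal A(\PP)=\{\emptyset,\omega_1,\omega_2,\Omega\}$, the 2-monotonicity inequality
\[
\PPP(A\vee B)\ge \PPP(A) + \PPP(B) - \PPP(A\wedge B)
\]
is trivially an equality whenever $A\Rightarrow B$ or $B\Rightarrow A$. The only pair that is not of this form is $A=\omega_1$, $B=\omega_2=\neg\omega_1$, for which the inequality reduces to $1\ge \PPP(\omega_1)+\PPP(\neg\omega_1)$, and this is guaranteed by 2-coherence via Proposition \ref{prop_character_2coherence} $(ii)$. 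The 2-alternation of $\overline P$ then follows by conjugacy, since conjugate measures share the same consistency properties.

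The main ``difficulty'' is really organisational: the result amounts to collecting the VBM case from Proposition \ref{VBM_coherent}, the HBM case from Proposition \ref{HBM_coh_implies_alter}, and observing that the RRM case collapses to a triviality under the coherence-forced constraint $|\PP|=2$. I expect no computational obstacle; the proof is essentially a dispatch across the three submodels.
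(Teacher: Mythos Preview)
Your proof is correct and follows essentially the same three-way case split (VBM via Proposition~\ref{VBM_coherent}, HBM via Proposition~\ref{HBM_coh_implies_alter}, RRM via Proposition~\ref{consistency_RRM}) as the paper. The only minor difference is that, for the RRM case, the paper dispatches the constraint $|\PP|=2$ by citing the general fact that any coherent $\PPP$ on $\mathcal A(\PP)$ with $|\PP|\le 3$ is automatically 2-monotone \cite[Proposition~6.9]{TdC}, whereas you verify the $|\PP|=2$ case directly; both are fine.
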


To clarify which beliefs may be supported by a RRM, we state its following elementary properties:

\begin{Prop}
\label{properties}
Let $(\PPP,\overline P)$ be a RRM, $A\in \mathcal A(\PP)\setminus\{\emptyset,\Omega\}$. Then, concerning $\PPP$,
\begin{itemize}
\item[$k)$] $\PPP(A)>P_0(A)$ iff $P_0(A)<\frac{a}{1-b}$;
\item[$kk)$] $\PPP(A)\in [a,a+b]$.
\end{itemize}
As for $\overline P,$
\begin{itemize}
\item[$k')$] $\overline P(A)<P_0(A)$ iff $P_0(A)>\frac{c}{1-b}$;
\item[$kk')$] $\overline P(A)\in[c,b+c]=[1-(a+b),1-a]$.
\end{itemize}
\end{Prop}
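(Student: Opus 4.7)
The plan is to note that all four statements reduce to elementary manipulations of the closed-form expressions \eqref{lower_RRM} and \eqref{upper_RRM}, exploiting the parameter constraints \eqref{ab_RRM}. The only subtlety worth flagging is that the constraints $b+2a \le 1$ and $a>0$ jointly imply $b<1$ and $a+b<1$, hence $1-b>0$ and $c=1-(a+b)>0$; these sign facts are what make the rearrangements below monotone rather than sign-flipping.

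For $k)$, I would start from $\PPP(A) > P_0(A)$ and substitute $\PPP(A) = bP_0(A)+a$ to obtain $a > (1-b)P_0(A)$. Since $1-b>0$, dividing yields $P_0(A) < \frac{a}{1-b}$, giving the stated equivalence. The proof of $k')$ is completely symmetric: from $\overline P(A) = bP_0(A)+c$, the inequality $\overline P(A) < P_0(A)$ is equivalent to $(1-b)P_0(A) > c$, and since $1-b>0$ one divides to get $P_0(A) > \frac{c}{1-b}$.

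For $kk)$, since $A \in \mathcal A(\PP)\setminus\{\emptyset,\Omega\}$ and $P_0$ is a probability with $P_0(A)\in[0,1]$, and $b>0$, the map $P_0(A)\mapsto bP_0(A)+a$ is increasing and sweeps out the interval $[a,a+b]$; this gives $\PPP(A)\in[a,a+b]$. Analogously for $kk')$, substituting $c$ for $a$ yields $\overline P(A) \in [c, b+c]$, and replacing $c$ by $1-(a+b)$ produces the form $[1-(a+b), 1-a]$.

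There is no genuine obstacle here; the only thing to watch is that the bounds in $kk)$ and $kk')$ are stated for $A\neq\emptyset,\Omega$, which is consistent with the fact that the piecewise definition of $\mu$ is not invoked (no clipping by $0$ or $1$ is needed thanks to \eqref{ab_RRM}, which guarantees $0<a\le bP_0(A)+a\le a+b<1$). I would conclude by remarking that the bounds $[a,a+b]$ and $[1-(a+b),1-a]$ are in fact attained whenever $P_0$ takes the values $0$ or $1$ on some non-trivial event of $\mathcal A(\PP)$.
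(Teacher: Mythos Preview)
Your proof is correct. The paper does not actually supply a proof of this proposition: it introduces the result as stating ``elementary properties'' and leaves the verification to the reader, so your direct algebraic derivation from \eqref{lower_RRM}, \eqref{upper_RRM} and the parameter constraints \eqref{ab_RRM} is exactly the intended (and only natural) approach.
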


Properties $kk)$, $kk')$ justify the name Restricted Range Model: the values of $\PPP,\overline P$ on non-trivial events belong to proper subsets of the interval $[0,1]$, distinct unless $\PPP=\overline P$, but both having the same width $b$. Thus $b$ measures the range of the admissible $\PPP$- and $\overline P$-evaluations for any non-trivial event.
Now take $\PPP$, whose plot against $P_0$ is illustrated in Figure \ref{fig_RRM}, 1). 
\begin{figure}[htbp!]
\centering
\includegraphics[width=\textwidth]{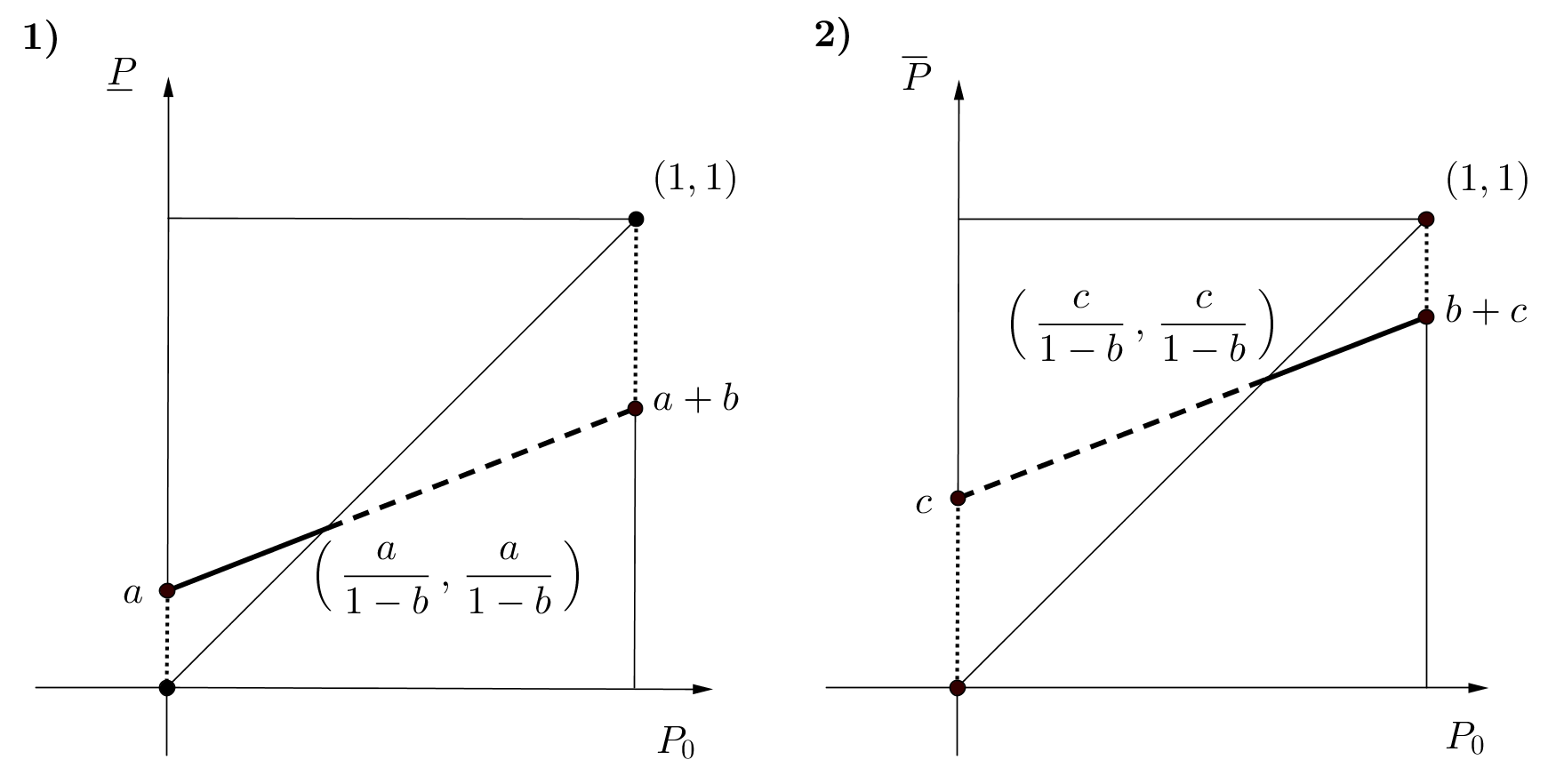}
\caption{Plots of $\PPP,\overline P$ against $P_0$ in the RRM. $\PPP$ and $\overline P$ are dashed bold in their prudential part, continuous bold elsewhere. $1)$ $\PPP$ against $P_0$. $2)$ Its conjugate $\overline P$ against $P_0$.}
\label{fig_RRM}
\end{figure}
By property $k)$, the evaluation of a $\PPP$-assessor is imprudent for low probability events (low meaning $P_0<\frac{a}{1-b}$), prudential otherwise. Interpreting again $P_0(A)$ as the `true' probability for event $A$, the $\PPP$-assessor overestimates low probability events, while underestimates high probability event, which s/he buys for no more than $a+b<1$. Here $a$ measures the assessor's imprudence: her/his buying price for any non-impossible event is at least $a$.

So far, the RRM looks  very similar to the HBM, as both can represent an assessor's conflicting attitudes. At a closer look, a difference is that the $\PPP$-assessor is imprudent with high probability events, using the HBM, with low probability events, by the RRM. Further, in the prudential area, the agent is more cautious with the HBM: there are events (those $A$ such that $P_0(A)<-\frac{a}{b}$) which s/he does not buy, unless they are given away for nothing. On the contrary, in the RRM the assessor is willing to buy \emph{any} event paying at least $a$. The opposite holds in the imprudent area. 

Probabilities play a negligible role within RRMs: as a consequence of Proposition \ref{consistency_RRM}, $\PPP=\overline P=P$ is a probability iff $|\PP|=2$.

In the whole, we may say that the RRM is the least consistent among the NL models, and (correspondingly) the one which deviates more from the PMM.

\subsection%{Hurwicz capacities}
{Degenerate NL models}
\label{HZ}

Define $\mu_h$ as $\mu_h(A)=a\in[0,1]$, $\forall A\in\mathcal A(\PP)\setminus\{\emptyset,\Omega\}$, and $\mu_h(\emptyset)=0,\mu_h(\Omega)=1$. This is a special \emph{Hurwicz capacity}, following \cite{CEG}.

So far, we supposed $b>0$ in Definition \ref{NL_mod} of a NL model; this assumption has been useful in previous computations, to prevent dividing by zero. However, allowing $b=0$, the Hurwicz capacity $\mu_h$ can be formally computed by Equation \eqref{def_mu_NL}, as well as its conjugate $\mu_h^c$ given by $\mu_h^c(A)=1-a$, $\forall A\in\mathcal A(\PP)\setminus\{\emptyset,\Omega\}$. 

Thinking of the couple $(\mu_h,\mu_h^c)$, we term $a$ the smaller between $\mu_h(A)$, $\mu_h^c(A)$, i.e., with the notation used so far, $a\le 1-a$. Consequently, $\mu_h\le\mu_h^c$, so $\mu_h$ ($\mu_h^c$) is a lower (upper) probability. Summing up:
\begin{Def}
A \emph{Degenerate NL Model} is a couple $(\PPP_h,\overline P_h)$, with $\PPP_h$ NL$(a,0)$, $\overline P_h$ NL$(1-a,0)$, $a\in [0,\frac{1}{2}]$.
\end{Def}

Despite its simplicity, a degenerate NL model may be compatible with different degrees of consistency, as the following proposition points out referring to $\PPP_h$ (conjugacy ensures the same properties for $\overline P_h$). 
\begin{Prop}
\label{prop_deg_mod}
Let $(\PPP_h,\overline P_h)$ be a degenerate NL model. 
\begin{itemize}
\item[$(a)$] $\PPP_h:\mathcal A(\PP)\to\RR$ is 2-coherent.
\item[$(b)$] The restriction of $\PPP_h$ on $\mathcal A(\PP)\setminus\{\emptyset,\Omega\}$ is convex.
\item[$(c)$] If $\PP$ is finite, $\PP=\{\omega_1,\dots,\omega_n\}$, $\PPP_h$ is C-convex on $\mathcal A(\PP)$ iff $a\le \frac{1}{n}$ iff $\PPP_h$ avoids sure loss on $\mathcal A(\PP)$.
\end{itemize}
\end{Prop}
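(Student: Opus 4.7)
For part (a), the plan is to invoke the characterization of $2$-coherence in Proposition~\ref{prop_character_2coherence}. Since $\PPP_h$ takes only the values $0$, $a$ and $1$ with $0\le a\le 1$, monotonicity and the normalizations $\PPP_h(\emptyset)=0$, $\PPP_h(\Omega)=1$ are trivial. The sole condition that genuinely uses the hypothesis $a\le 1/2$ is $\PPP_h(A)+\PPP_h(\neg A)\le 1$: if $A\in\{\emptyset,\Omega\}$ the sum equals $1$; otherwise both $A$ and $\neg A$ lie in $\mathcal A(\PP)\setminus\{\emptyset,\Omega\}$ and the sum equals $2a\le 1$.

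For part (b), I would observe that $\PPP_h\equiv a$ on the restricted domain, so in the convexity gain with $\sum_{i=1}^n s_i=s_0=1$ every constant term cancels, leaving
$$
\underline G=\sum_{i=1}^n s_iI_{A_i}-I_{A_0}.
$$
Because $A_0\neq\Omega$, there is an atom $\omega\in\PP$ with $\omega\not\Rightarrow A_0$, at which $\underline G(\omega)=\sum_{i=1}^n s_iI_{A_i}(\omega)\ge 0$; hence $\max\underline G\ge 0$.

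For part (c), I would prove the three properties equivalent by a short cycle. First, ``C-convex $\Rightarrow$ avoids sure loss'' follows by setting $A_0=\emptyset$ in the C-convexity inequality (allowed since $\emptyset\in\mathcal A(\PP)$ and $\PPP_h(\emptyset)=0$), which kills the $A_0$-term; scale-invariance of avoiding sure loss then removes the normalization $\sum_{i=1}^n s_i=1$. Second, ``avoids sure loss $\Rightarrow a\le 1/n$'' is proved by contraposition: if $a>1/n$, choose $s_i=1$ and $A_i=\omega_i$ for $i=1,\dots,n$, so that the gain is identically $1-na<0$. Third, for ``$a\le 1/n\Rightarrow$ C-convex'', indices with $A_i\in\{\emptyset,\Omega\}$ contribute $0$ to $\underline G$ and are discarded; let $J$ denote the surviving indices $i\ge 1$ and $S=\sum_{i\in J}s_i\le 1$. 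If $A_0\in\mathcal A(\PP)\setminus\{\emptyset,\Omega\}$, at any atom $\omega\not\Rightarrow A_0$ one has $\underline G(\omega)=\sum_{i\in J,\,\omega\Rightarrow A_i}s_i+a(1-S)\ge 0$. If instead $A_0\in\{\emptyset,\Omega\}$, its term also vanishes, and averaging over $\PP$ gives
$$
\max_{\omega\in\PP}\sum_{i\in J}s_iI_{A_i}(\omega)\ge\frac{1}{n}\sum_{\omega\in\PP}\sum_{i\in J}s_iI_{A_i}(\omega)\ge\frac{S}{n}\ge aS,
$$
using that each $A_i$ with $i\in J$ contains at least one atom; so $\max\underline G\ge 0$. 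The main obstacle is the bookkeeping in this final implication: the stakes of discarded indices reduce the usable mass from $1$ to $S$, forcing one to carry $S$ through the averaging bound rather than using the cruder estimate $\max\ge 1/n$.
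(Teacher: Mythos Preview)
Your proof is correct. Parts $(a)$ and $(b)$ coincide with the paper's approach: invoking Proposition~\ref{prop_character_2coherence} for 2-coherence, and observing that the constant $a$ cancels in the convexity gain so that an atom outside $A_0$ witnesses $\max\underline G\ge 0$.

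In part $(c)$, the first two implications are also essentially those of the paper (which cites \cite{PV03} for C-convex $\Rightarrow$ ASL, and tests the gain on the atoms with equal stakes for ASL $\Rightarrow a\le 1/n$). The genuine difference lies in the implication $a\le 1/n \Rightarrow$ C-convex. The paper proceeds via the Envelope Theorem~\ref{env_thm}$(c)$: it exhibits an explicit family $\mathcal M=\{P_0,P_1,\dots,P_n\}$ (the uniform measure plus the $n$ Dirac measures) with shifts $\alpha_0=0$, $\alpha_i=a$, and verifies that $\PPP_h(A)=\min_i\{P_i(A)+\alpha_i\}$ for every $A$. Your argument instead checks Definition~\ref{def_all}$(b)$ directly, discarding trivial indices and using an averaging bound over $\PP$ when $A_0\in\{\emptyset,\Omega\}$. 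Your route is more elementary and self-contained, avoiding any appeal to envelope representations; the paper's route is more structural, since the explicit envelope gives additional information about $\PPP_h$ (for instance, an immediate description of the credal-type set realising it). Both arguments are of comparable length.
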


Behaviourally, $\PPP_h$ expresses willingness to buy any non-trivial event in $\mathcal A(\PP)$ at the same price. If $a=0$, $\PPP_h$ coincides with the vacuous lower probability and is coherent. In general, $\PPP_h$ is not coherent: already its restriction on $\PP$ is incoherent, if $\PP$ is infinite and $a>0$.

Still referring to the restriction of $\PPP_h$ on $\PP$, note that it expresses a symmetry judgement over the atoms of $\PP$. With coherence, this may be acceptable for $a>0$ only if $\PP$ is finite, and then with the constraint $a\le \frac{1}{|\PP|}$. With precise probabilities, it is even required that $a=\frac{1}{|\PP|}$;  this corresponds to the uniform probability. By Proposition \ref{prop_deg_mod}, 2-coherence and convexity are instead compatible with a symmetry or indifference judgement on even larger sets: the whole $\mathcal A(\PP)$ with 2-coherence, $\mathcal A(\PP)\setminus\{\emptyset,\Omega\}$ with convexity, again $\mathcal A(\PP)$, but only if $\PP$ is finite, assuming C-convexity (or equivalently the condition of avoiding sure loss).
%judgement on $\PP$, no matter what the cardinality of $\PP$ is.
\begin{Rem}
In general, neither between 2-coherence and convexity implies the other one, while a lower prevision \emph{defined on a real vector space} of gambles is 2-coherent and convex if and only if it is coherent \cite[Proposition 13]{PV16}. The results in this section show that this equivalence is not general. In fact, a Hurwicz capacity $\PPP_h$ defined on $\PP$ (or also on $\mathcal A(\PP)\setminus\{\emptyset,\Omega\}$) is both 2-coherent and convex, by Proposition \ref{prop_deg_mod} $(a)$, $(b)$, but not coherent if $a>0$ and $\PP$ is infinite.   
\end{Rem}

\section{Comparisons with Other Models}
\label{sec_compare}
In this section we compare NL models with other models proposed in the literature that are to a certain extent overlapping with them. Prior to this, we discuss in the next subsection how the properties of the sets $\NNN_{\mu},\UUU_{\mu}$ of, respectively, null and universal events for a measure $\mu$ may vary with the degree of consistency required to $\mu$. The topic is of interest in itself, although our primary aim is to exploit it while comparing NL models and neo-additive capacities in Section \ref{NL_neo}.

\subsection{Null and universal events with different consistency notions} 
\label{NU}
Let $\mu$ be an uncertainty measure on $\mathcal A(\PP)$. Recall that a set of events $\mathcal F\subset \mathcal A(\PP)$ is a \emph{filter} if:
$$
\Omega\in\mathcal F, \, \emptyset \notin\mathcal F; \quad A,B\in\mathcal F \Rightarrow A\wedge B\in\mathcal F; \quad \text{if }A\in \mathcal F, A\Rightarrow B \text{ then } B\in\mathcal F.
$$
A set of events $\mathcal I\subset \mathcal A(\PP)$ is an \emph{ideal} if:%of $\mathcal A(\PP)$ 
$$
\emptyset \in\mathcal I, \, \Omega\notin\mathcal I; \quad A,B\in\mathcal I \Rightarrow A\vee B\in\mathcal I; \quad \text{if }A\in \mathcal I, B\Rightarrow A \text{ then } B\in\mathcal I.
$$
When $\mu=P$, a probability, it is well-known that:
\begin{itemize}
\item[$i)$] $\UUU_P$ is a filter, while $\NNN_P$ is an ideal.
\item[$ii)$] $A\in\UUU_P$ iff $\neg A\in\NNN_P$.
\end{itemize}
When $\mu$ is an imprecise probability, these properties are only partly retained, and the way $\mu$ deviates from them depends on its exact type of consistency. We present significant cases for lower probabilities in the next result.

\begin{Prop}
	\label{ideals_and_others}
Let $\PPP:\mathcal A(\PP)\to\RR$ be a lower probability.
\begin{itemize}
\item[$(a)$] If $\PPP$ is coherent, then $\UUU_{\PPP}$ is a filter, but $\NNN_{\PPP}$ may not be an ideal. Further, it holds that 
\begin{equation}
\label{U_implies_N}
A\in\UUU_{\PPP}\Rightarrow \neg A\in\NNN_{\PPP}.
\end{equation}
\item[$(b)$] If $\PPP$ is 2-coherent, $\UUU_{\PPP}$ may not be a filter: in particular, it is possible that $\PPP(A)=\PPP(B)=1$, $A\wedge B\neq \emptyset$, but $\PPP(A\wedge B)=0$. Equation \eqref{U_implies_N} holds.
\item[$(c)$] If $\PPP$ is convex, $\UUU_{\PPP}$ may not be a filter, but if $\PPP(A)=\PPP(B)=1$, $A\wedge B\neq \emptyset$, then $\PPP(A\wedge B)\ge \frac{1}{2}$ (the bound is tight). Equation \eqref{U_implies_N} does not necessarily hold; it does if $\PPP$ is C-convex.
\end{itemize}
\end{Prop}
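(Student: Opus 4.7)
The plan is to handle the three parts in turn, using Proposition~\ref{prop_character_2coherence} and the Envelope Theorem~\ref{env_thm}(c) as the main external inputs.

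For the filter claim in $(a)$, normalisation (implied by coherence) gives $\Omega\in\UUU_{\PPP}$ and $\emptyset\notin\UUU_{\PPP}$, upward closure comes from monotonicity, and closure under binary intersection is exactly \eqref{quasi_superadd}: $\PPP(A)=\PPP(B)=1$ forces $1+\PPP(A\wedge B)\ge 2$, hence $\PPP(A\wedge B)=1$. Equation~\eqref{U_implies_N} under 2-coherence (so a fortiori under coherence) follows from Proposition~\ref{prop_character_2coherence}(ii), which gives $\PPP(\neg A)\le 1-\PPP(A)=0$, combined with $\PPP(\neg A)\ge \PPP(\emptyset)=0$ via monotonicity. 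A coherent witness that $\NNN_{\PPP}$ need not be an ideal is the PMM with $P_0$ uniform on $\PP=\{\omega_1,\omega_2,\omega_3\}$ and $\delta=1$: each atom has $\PPP_{\rm PMM}(\omega_i)=0$, yet $\PPP_{\rm PMM}(\omega_1\vee\omega_2)=1/3>0$. For $(b)$, the failure of $\UUU_{\PPP}$ to be a filter is witnessed by the first HBM of Example~\ref{01_not_coherent}, which is 2-coherent by Proposition~\ref{HB_2coherent} and satisfies $\PPP(\omega_1\vee\omega_2)=\PPP(\omega_1\vee\omega_3)=1$ while $\PPP((\omega_1\vee\omega_2)\wedge(\omega_1\vee\omega_3))=\PPP(\omega_1)=0$.

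For the bound in $(c)$, I instantiate convexity with $n=2$, $s_1=s_2=1/2$, $A_1=A$, $A_2=B$, $s_0=1$, $A_0=A\wedge B$, obtaining
\[\underline G=\tfrac{1}{2}I_A+\tfrac{1}{2}I_B-I_{A\wedge B}-1+\PPP(A\wedge B).\]
Pointwise, this is $\PPP(A\wedge B)-1$ on $A\wedge B$ and on $\neg(A\vee B)$, and $\PPP(A\wedge B)-1/2$ on the symmetric difference $A\triangle B$; if $A\ne B$ the latter is non-empty, so $\max\underline G=\PPP(A\wedge B)-1/2$ and convexity gives the bound. The C-convex part of \eqref{U_implies_N} follows from a similar move: with $A_1=A$, $A_2=\neg A$, $s_1=s_2=1/2$, $A_0=\emptyset$, $s_0=1$, using $\PPP(\emptyset)=0$ and $I_A+I_{\neg A}\equiv 1$, the gain collapses to the constant $-(1/2)\PPP(\neg A)$, so $\PPP(\neg A)\le 0$; monotonicity of $\PPP$ (a standard consequence of convexity) together with $\PPP(\emptyset)=0$ yields $\PPP(\neg A)\ge 0$.

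For tightness of the bound and the failure of \eqref{U_implies_N} in the general convex case, I would use a single envelope: via Theorem~\ref{env_thm}(c) on $\PP=\{\omega_1,\omega_2,\omega_3\}$, take $\mathcal M=\{P_1,P_2\}$ with $P_1(\omega_1)=P_1(\omega_3)=1/2$, $P_2(\omega_2)=P_2(\omega_3)=1/2$, and $\alpha(P_1)=\alpha(P_2)=1/2$. A direct calculation gives $\PPP(\omega_1\vee\omega_2)=\PPP(\omega_2\vee\omega_3)=\PPP(\omega_1\vee\omega_3)=1$ and $\PPP(\omega_2)=1/2$: with $A=\omega_1\vee\omega_2$, $B=\omega_2\vee\omega_3$ (so $A\wedge B=\omega_2\ne\emptyset$), the bound $\PPP(A\wedge B)=1/2$ is attained; simultaneously $\neg\omega_2=\omega_1\vee\omega_3\in\UUU_{\PPP}$ while $\omega_2\notin\NNN_{\PPP}$, violating \eqref{U_implies_N}. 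Since $\PPP(\emptyset)=1/2\ne 0$, this $\PPP$ is convex but not C-convex, as required. I anticipate this design to be the main technical obstacle: the envelope members must disagree pointwise on $A\wedge B$ so that the minimum there stays at $0$ after the constant shift $\alpha$, while aligning on $A$ and $B$ so that both values are pushed up to $1$.
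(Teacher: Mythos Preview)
Your proof is correct and shares the paper's core moves---the filter property via \eqref{quasi_superadd}, the implication \eqref{U_implies_N} via Proposition~\ref{prop_character_2coherence}(ii), and the $\tfrac12$ bound via the convexity gain with $s_1=s_2=\tfrac12$, $A_0=A\wedge B$---but departs from it in several places worth recording.

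For the counterexamples you recycle material already in the paper: the PMM with uniform $P_0$ and $\delta=1$ for $(a)$, and the HBM of Example~\ref{01_not_coherent} for $(b)$. The paper instead argues $(a)$ generically (``$\PPP(A)=\PPP(B)=0$ need not force $\PPP(A\vee B)=0$'') and for $(b)$ verifies directly that $\PPP(A)=\PPP(B)=1$, $\PPP(A\wedge B)=1-\varepsilon$ is 2-coherent on $\{A,B,A\wedge B\}$ for every $\varepsilon\in[0,1]$, then invokes a 2-coherent extension result. Your route is quicker given the ambient examples; the paper's is more self-contained. For the C-convex part of \eqref{U_implies_N} you run a second convexity gain with $A_0=\emptyset$, collapsing $\underline G$ to the constant $-\tfrac12\PPP(\neg A)$; the paper instead cites the implication C-convex $\Rightarrow$ avoids sure loss from \cite{PV03} to get $\PPP(A)+\PPP(\neg A)\le 1$. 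Your argument is more elementary and stays within Definition~\ref{def_all}. Finally, your single three-atom envelope (with $\alpha\equiv\tfrac12$) simultaneously witnesses tightness of the $\tfrac12$ bound and the failure of \eqref{U_implies_N}, whereas the paper uses a four-atom construction (Table~\ref{convex_P}) for tightness and a separate singleton envelope $P_2+\alpha_2$ for the failure. One small presentational point: in your bound argument you should state explicitly that the case $A=B$ is trivial (then $\PPP(A\wedge B)=1$), since your formula $\max\underline G=\PPP(A\wedge B)-\tfrac12$ only holds when $A\triangle B\neq\emptyset$.
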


We may conclude that, in particular, condition $A\in\UUU_{\mu}\Leftrightarrow \neg A\in\NNN_{\mu}$ cannot be expected to hold with imprecise probabilities, while its weakened form \eqref{U_implies_N} does for several common consistency requirements. 

%\textcolor{red}{si potrebbe aggiungere una nota per osservare che \eqref{U_implies_N} si riferisce a probabilita' inferiore e aggiungere a cosa corrisponde per probabilita' superiore}

\subsection{NL models and other capacities}
\label{NL_neo}
Since $\PPP,\overline P$ in any NL model are (also) capacities, NL models overlap to some extent with certain models exploiting specific capacities for various purposes.

To the best of our knowledge, the oldest reference of this kind is a paper by Rieder \cite{R} focused on statistical robustness. In \cite{R}, Rieder introduces a specific VBM and proves the 2-monotonicity of its $\underline P$. His model is a special case of ours, since he requires (using our parametrisation) the extra condition $a\ge -1$.

The comparison is more complex with \emph{neo-additive capacities} (NACs)\footnote{Neo-additive is the acronym for Non-Extreme Outcomes additive.} introduced in \cite{CEG}, and \emph{Generalised neo-additive capacities} (GNACs) introduced in \cite{EGL12}. These types of capacities, originally devised within utility theory, were also studied and characterised in an abstract measure setting in \cite{GH16}. %cosa viene caratterizzato in GH?
In these papers, a probability $P_0$ is given, and the capacity $\mu$ is a linear affine transformation of $P_0$ on the set $\EEE$ of essential events, i.e., $\mu(A)=bP_0(A) +a $, $\forall A\in\EEE$. However, with NL models we derive the sets $\NNN_{\mu},\UUU_{\mu}$ \emph{a posteriori}, after defining $\mu$ by \eqref{def_mu_NL} and using \eqref{N}, \eqref{U}, while $\mathcal E_{\mu}$ is the difference $\mathcal A(\PP)\setminus(\mathcal N_{\mu}\cup \mathcal U_{\mu})$. By contrast,
in this approach a set $\NNN$ of events which \emph{must} have $\mu$-measure 0 is fixed \emph{a priori}, requiring further that $\NNN$ is an ideal and that 
\begin{equation}
\label{N_implies_U}
A\in\UUU\Leftrightarrow \neg A\in\NNN.
\end{equation}
Then $\mu$ is defined on $\mathcal A(\PP)$ by:
\begin{equation}
\label{neo}
\mu(A)=
\begin{cases}
0 & \text{if }A\in\NNN\\
bP_0(A) + a & \text{if }A\in\EEE\stackrel{\rm def}{=}\mathcal A(\PP)\setminus(\NNN\cup\UUU)\\
1 &\text{if } A\in\UUU
\end{cases}
.
\end{equation}
Thus, $\mathcal N$ does not necessarily coincide with what we defined as $\mathcal N_{\mu}$: there may be $A\in\mathcal E$ having $\mu(A)=0$ (if $bP_0(A) + a=0$).

Unlike NL models, neo-additive capacities require that $a,b\in[0,1]$,\footnote{This is seen after a relabelling in \cite[Definition 3.2]{CEG}, originally defining NACs as a mixture of $P_0$ and a Hurwicz capacity.} $0\le bP_0(A) + a\le 1$\, $\forall A\in\EEE$, and that $\NNN_{P_0}\supset \NNN$. GNACs require $b\ge 0$, $a\in\RR$ as in the NL models, but with the additional condition that $\PP$ is finite. Also, the set $\NNN$ still has to be an ideal, and might be determined on the basis of utility theory considerations.

In general, the sets $\NNN_{\PPP},\NNN_{\overline P}$ in the NL models cannot be taken as $\NNN$: they are typically not ideals, %and they may not satisfy \eqref{N_implies_U}. On the contrary, 
%\textcolor{red}{moreover $\PPP$ tends} to satisfy the opposite implication \eqref{U_implies_N} \textcolor{red}{of \eqref{N_implies_U}},
while $\mathcal N_{\PPP},\mathcal U_{\PPP}$ may not satisfy \eqref{N_implies_U}. For them the implication \eqref{U_implies_N} holds instead 
under coherence or 2-coherence, as we have seen in Section \ref{NU}. 
%\textcolor{red}{This corresponds to the fact %\textcolor{red}{bisognerebbe dire che questo corrisponde al fatto che 
%$$
%A\in\NNN_{\overline P}\Rightarrow \neg A\in\UUU_{\overline P}. 
%$$
%}
Nor can $\EEE$ be identified with $\EEE_{\PPP}$ or $\EEE_{\overline P}$: it is possible in \eqref{neo} that $A\in\EEE$ and $\mu(A)=0$ or $\mu(A)=1$. Note also that, if one wishes for whatever reason to impose that certain events belong to $\NNN$, then most NL models are unfit for this purpose. For instance, take $\PPP_{\rm PMM}$: here $\UUU_{\PPP_{\rm PMM}}=\{\Omega\}$, hence to satisfy \eqref{N_implies_U} and \eqref{neo} it is necessary that $\NNN=\{\emptyset\}$. Similarly with the RRM.

There are further differences with NL models:
\begin{itemize}
\item[-] It is not distinguished whether any $\mu$ is a lower or an upper probability.
\item[-] The consistency properties of NACs and GNACs (coherence, 2-coherence) are not investigated, even though some special cases of GNACs are studied in \cite{EGL12}. 2-monotone or 2-alternating capacities (termed convex or concave, respectively) are especially considered.
\end{itemize}

An analogy lies instead in the motivations for introducing these models, which are to represent coexistence of both optimism and pessimism towards uncertainty or ambiguity. This is closed to the interpretation in terms of the assessor's prudential or non-prudential attitude in NL models.

\subsection{NL models and probability intervals}
\label{sec_intervals}
Given a \emph{finite} partition $\PP=\{\omega_1,\dots, \omega_n\}$, a \emph{probability interval} $\II=(l,u)$ on $\PP$ is an $n$-tuple of intervals $[l_i,u_i]$, $0\le l_i\le u_i\le 1$, $i=1,\dots,n$ \cite{CHM94}.

It may be viewed 
\begin{itemize}
\item[$i)$] as a lower and upper probability assignment on $\PP$, 
%\end{itemize}
or equivalently 
%\begin{itemize}
\item[$ii)$] as a lower probability assignment on $\PP\cup\{A_i:\neg A_i\in \PP\}$, where, for $i=1,\dots,n$, $\PPP(\omega_i)=l_i$, 
$\PPP(A_i)=1-\overline P(\neg A_i)=1-u_i$ if $\neg A_i=\omega_i$. 
\end{itemize}
It is well-known \cite{book,MMD18,CHM94} that a probability interval $\II$ is coherent (reachable in the terminology of \cite{CHM94}) on $\PP$ iff
$$
u_i + \sum_{j\neq i}l_j \le 1, \quad l_i + \sum_{j\neq i} u_j\ge 1, \quad i=1,\dots,n,
$$
and that if $\II$ is coherent on $\PP$ it has a \emph{least-committal} extension (or natural extension \cite{W}) on $\mathcal A(\PP)$ given by, $\forall A\in\mathcal A(\PP)$,
\begin{align}
\label{extended_lower}
l(A) & = \max\bigg\{\sum_{\omega_i\Rightarrow A} l_i, 1- \sum_{\omega_i\Rightarrow \neg A} u_i\bigg\}, \\
\label{extended_upper}
u(A) & = 1-l(\neg A)= \min\bigg\{\sum_{\omega_i\Rightarrow A} u_i, 1- \sum_{\omega_i\Rightarrow \neg A} l_i\bigg\}.
\end{align}
Moreover, $l(\cdot)\, (u(\cdot))$ is coherent and 2-monotone (2-alternating) on $\mathcal A(\PP)$. Least-committal means that if $\PPP$ is any coherent extension of $\II$ on $\mathcal A(\PP)$ and $\overline P$ is its conjugate, then $\forall A\in\mathcal A(\PP)$
\begin{equation}
\label{N2}
(0\le)\, l(A)\le\PPP(A)\le\overline P(A)\le u(A)\, (\le 1).
\end{equation}
For a better understanding, we call \emph{extended probability interval}, $\II_E$, the least-committal extension \eqref{extended_lower}, \eqref{extended_upper} on $\mathcal A(\PP)$ of a coherent $\II$.

%Further, the extension of $\II$ on $\mathcal A(\PP)$, which we call \emph{extended probability interval} for a better understanding, is made of a lower probability $l(\cdot)$ and an upper probability $u(\cdot)$ which are conjugate and, respectively, 2-monotone and 2-alternating.

A reason for comparing NL models and probability intervals is that, as has recently been proven \cite{MMD18}, any PMM in a finite setting is an extended probability interval. It is therefore of interest to establish whether this property holds for more general NL models.

We start with VBMs. It appears already from the following result that they are usually not extended probability intervals.

%Clearly, we should focus on VBMs, as $\PPP,\overline P$ in the remaining subclasses are generally not coherent, and therefore can not be extended probability intervals. It appears from the following result that even VBMs are usually not extended probability intervals. 

\begin{Prop}
\label{characterisation_extended}
Let $(\PPP,\overline P)$ be a VBM  on $\mathcal A(\PP)$ (Definition \ref{VBM}), $|\PP|=n$, and define $\II=(\PPP|_{\PP},\overline P|_{\PP})$. Then $(\PPP,\overline P)$ is the extended probability interval $\II_E$ of $\II$ if and only if one of the following holds:
\begin{itemize}
%\item[$(a)$] $a+b=0$
\item[$(a)$] $a=0$,
\item[$(b)$] $a+b=1$,
\item[$(c)$] $\forall A\in\mathcal A(\PP)\setminus\{\emptyset,\Omega\}$, $A$ satisfies one of the following conditions (the condition may vary with $A$):
\begin{itemize}
    \item[$(c1)$] $\PPP(A)=0$,
    \item[$(c2)$] $a<0$, $\PPP(A)>0$, $\displaystyle A=\omega^+\vee \bigvee_{j=1}^k \omega_{i_j}$, $k\in\{0,\dots,n-1\}$, $\PPP(\omega^+)>0$, $P_0(\omega_{i_j})=0$, $j=1,\dots,k$,
    \item[$(c3)$] $\PPP(A)>0$ and $\neg A\in\PP$.
\end{itemize}
\end{itemize}
\end{Prop}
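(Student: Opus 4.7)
The overall strategy is to reduce the question to a pointwise comparison of $\PPP$ with the lower envelope $l$ of \eqref{extended_lower}. Since $(\PPP,\overline P)$ is coherent on $\mathcal A(\PP)$ by Proposition \ref{VBM_coherent}, it is in particular a coherent extension of its restriction $\II$, so \eqref{N2} gives $l(A)\le \PPP(A)$ for free. Hence $(\PPP,\overline P)=\II_E$ is equivalent to requiring, at each $A$, that at least one of the two terms in the max defining $l(A)$ actually meets $\PPP(A)$; equivalently, for each non-trivial $A$, writing $\Sigma_A:=\sum_{\omega_i\Rightarrow A}\PPP(\omega_i)$ and $\Sigma^*_{\neg A}:=\sum_{\omega_i\Rightarrow \neg A}\overline P(\omega_i)$, either (i) $\Sigma_A=\PPP(A)$ (no strict superadditivity of $\PPP$ at $A$) or (ii) $\Sigma^*_{\neg A}=\overline P(\neg A)$ (no strict subadditivity of $\overline P$ at $\neg A$). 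The whole proof is organised around deciding which of (i) or (ii) applies at each $A$.

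The sufficiency half amounts to producing, under any of (a), (b), (c), a valid choice between (i) and (ii) at every $A$. Cases (c1) and (c3) are essentially immediate: (c1) makes $\PPP(\omega_i)=0$ on every atom of $A$ by monotonicity, so (i) is trivial; (c3) collapses $\Sigma^*_{\neg A}$ to a single term. Case (c2) exploits $a<0$ and $P_0(\omega_{i_j})=0$ to force $\PPP(\omega_{i_j})=0$ and $P_0(A)=P_0(\omega^+)$, reducing $\Sigma_A$ to $\PPP(\omega^+)=\PPP(A)$. Case (a) uses that $a=0$ makes $\PPP$ atom-additive, so (i) holds outright. Case (b) is the most delicate: with $c=0$, whenever $\PPP(A)>0$ monotonicity gives $\overline P(\omega_i)\le \overline P(\neg A)<1$ for every $\omega_i\Rightarrow\neg A$, so each atomic upper probability is unsaturated and $\Sigma^*_{\neg A}=bP_0(\neg A)=\overline P(\neg A)$, yielding (ii); the residual case $\PPP(A)=0$ falls under (c1).

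For the necessity half I would argue by contraposition. Assuming (a) and (b) fail, the VBM constraints \eqref{ab_VB} sharpen to $a<0$ and $c=1-(a+b)>0$, and by hypothesis some $A$ violates all three of (c1), (c2), (c3); I would then show (i) and (ii) both fail at this $A$. Failure of (ii) comes from a clean estimate: $\PPP(A)>0$ forces $\overline P(\neg A)<1$, so $bP_0(\neg A)+c<1$, every atom of $\neg A$ is unsaturated, and the identity $\overline P(\omega_i)=bP_0(\omega_i)+c$ sums to give an excess $(m-1)c>0$ with $m\ge 2$ guaranteed by the failure of (c3). Failure of (i) requires a split on $|S|$, where $S=\{i:\omega_i\Rightarrow A,\,\PPP(\omega_i)>0\}$: for $|S|=0$ one gets $\Sigma_A=0<\PPP(A)$; for $|S|\ge 2$ a direct computation using $a<0$ gives $\Sigma_A-\PPP(A)=a(|S|-1)-b\sum_{i\notin S,\,\omega_i\Rightarrow A}P_0(\omega_i)<0$; the case $|S|=1$ is precisely where the failure of (c2) bites, producing some $\omega_{i_j}\Rightarrow A$ with $\PPP(\omega_{i_j})=0$ but $P_0(\omega_{i_j})>0$, whose strictly positive $P_0$-mass inflates $\PPP(A)$ strictly above $\Sigma_A=\PPP(\omega^+)$.

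I expect the $|S|=1$ subcase of the necessity to be the main difficulty, because that is where the structural content of (c2) — one distinguished atom $\omega^+$ plus a pool of $P_0$-null atoms — has to be extracted from the mere statement that (c2) fails; a careful atom bookkeeping is needed to ensure no alternative decomposition of $A$ is missed. A secondary subtle point worth flagging, as a consistency check between the two halves, is that the sufficiency of (b) coexists with the strict subadditivity appearing in the necessity estimate: the reconciliation is that the gap $(m-1)c$ degenerates to zero precisely when $c=0$, which is exactly what makes (b) an admissible exceptional case that does not require the decomposition structure of (c).
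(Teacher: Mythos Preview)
Your proposal is correct and follows essentially the same approach as the paper: both reduce to checking, for each non-trivial $A$ with $\PPP(A)>0$, whether one of the two terms in the $\max$ defining $l(A)$ equals $\PPP(A)$, and both carry out the same atom bookkeeping (the split on $|S|$ and the $(m-1)c$ excess) to characterise when each term attains $\PPP(A)$. The only organisational difference is that the paper derives the conditions directly as an ``iff'' at each $A$, whereas you separate sufficiency from a contrapositive necessity; the underlying computations and case distinctions coincide.
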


We may restate Proposition \ref{characterisation_extended} in a more expressive way, on the grounds of the following considerations. 

Its conditions $(a)$ and $(b)$ require, respectively, that $(\PPP,\overline P)$ is an $\varepsilon$-contami\-nation model or a Pari-Mutuel model.

As for condition $(c)$, it is easy to realise that it is always satisfied when $|\PP|=n\le 3$. 

When $n>3$, $(c)$ can only hold if 
\begin{equation}
\label{ppiu}
|\mathcal P^+|=|\{\omega\in\PP: \PPP(\omega)>0\}|\le 1.
\end{equation}
In fact, suppose that $(c)$ holds, let $\PPP(\omega^+)>0$, and take any $\omega\neq \omega^+$. Then $\PPP(\omega)=0$, or else $A=\omega^+\vee \omega$ satisfies
none of the subcriteria $(c1)$, $(c2)$, $(c3)$. Since $\omega$ is arbitrary in $\PP\setminus\{\omega^+\}$, \eqref{ppiu} holds.

Let us see in detail the two possible situations compatible with \eqref{ppiu}.

\begin{itemize}
\item $|\mathcal P^+|=1$. Then $\exists!\ \omega^+:\PPP(\omega^+)>0$. Taking any $\omega\neq \omega^+$, the event $A=\omega^+\vee \omega$ can only satisfy $(c2)$, but this requires $P_0(\omega)=0$. Thus $P_0$ must be concentrated in $\omega^+$ ($P_0(\omega^+)=1$), which is on the other hand also sufficient for $(c)$.

In fact, since $P_0\ge \PPP$, it must be $\PPP(A)=0$ for any $A$ such that $\omega^+\wedge A=\emptyset$. Therefore, if $\omega^+\wedge A=\emptyset$, $A$ satisfies $(c1)$, while if $\omega^+\wedge A=\omega^+$, $A$ satisfies $(c2)$ (if $\omega\Rightarrow A$, $\omega\neq \omega^+$, $\PPP(\omega)=P_0(\omega)=0$).
%Even when $\PPP(\omega)=0$, $\bar \omega \vee \omega$ can only satisfy $(c2)$, but this requires $P_0(\omega)=0$. Since $\omega$ is arbitrary in $\PP\setminus\{\bar \omega\}$, this means that $P_0$ must be concentrated in $\bar \omega$ ($P_0(\bar \omega)=1$).
 \item %$\PPP(\omega)=0$, $\forall \omega\in\PP$. 
$|\mathcal P^+|=0$. Then, $(c2)$ does not apply. To satisfy $(c)$, either $\PPP$ is the vacuous lower probability, or those events $A$ with $\PPP(A)>0$ are made of $n-1$ atoms of $\PP$. 
 \end{itemize}
 
 Summarising, we have 
 
 \begin{Prop}
 \label{prop_characterise_intervals}
 Let $(\PPP,\overline P)$ be a VBM on $\mathcal A(\PP)$. $(\PPP,\overline P)$ is the extended probability interval of its restriction on $\PP$ iff one of the following holds:
 \begin{itemize}
     \item[$(a)$] $(\PPP,\overline P)$ is an $\varepsilon$-contamination model,
     \item[$(b)$] $(\PPP,\overline P)$ is a PMM,
     \item[$(c)$] $|\PP|\le 3$,
     \item[$(d)$] $|\PP|>3$ and one the following holds: 
     \begin{itemize}
         \item[$\bullet$] $P_0$ is concentrated on one atom of $\PP$, 
         \item[$\bullet$] $(\PPP,\overline P)$ is the vacuous imprecise probability, 
         \item[$\bullet$] if $\PPP(A)>0$ then  $A$ is made of $n - 1$ atoms.  
     \end{itemize}
 \end{itemize}
\end{Prop}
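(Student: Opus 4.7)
The plan is to deduce Proposition \ref{prop_characterise_intervals} directly from Proposition \ref{characterisation_extended} by rewriting the latter's abstract conditions into the concrete geometric form stated here. Conditions $(a)$ and $(b)$ of Proposition \ref{characterisation_extended}, namely $a=0$ and $a+b=1$, correspond respectively to the $\varepsilon$-contamination model and the PMM (see Section \ref{vertical} and Example \ref{ex_PMM}); these yield items $(a)$ and $(b)$ of the present statement. What remains is to translate condition $(c)$ of Proposition \ref{characterisation_extended} into items $(c)$ and $(d)$.

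First I would dispose of the easy case $|\PP|=n\le 3$: every $A\in\mathcal A(\PP)\setminus\{\emptyset,\Omega\}$ is then either an atom or the negation of an atom, so whenever $\PPP(A)>0$ subcriterion $(c3)$ applies and otherwise $(c1)$ does. Hence condition $(c)$ of Proposition \ref{characterisation_extended} is automatic when $n\le 3$, confirming item $(c)$.

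For $n>3$ the pivotal step is the bound $|\mathcal P^+|\le 1$ recorded in \eqref{ppiu}, where $\mathcal P^+=\{\omega\in\PP:\PPP(\omega)>0\}$. Two distinct atoms $\omega_1^+,\omega_2^+\in\mathcal P^+$ would make $A=\omega_1^+\vee\omega_2^+$ violate all three subcriteria: $(c1)$ fails since $\PPP(A)\ge \PPP(\omega_1^+)>0$; $(c3)$ fails since $\neg A\notin\PP$ when $n>3$; and $(c2)$ fails because $P_0(\omega_2^+)\ge\PPP(\omega_2^+)>0$ contradicts the requirement that the secondary atoms there carry no $P_0$-mass. With this in hand I would split by $|\mathcal P^+|$. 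If $|\mathcal P^+|=1$ with unique atom $\omega^+$, then for every other $\omega\in\PP$ applying the subcriteria to $A=\omega^+\vee\omega$ forces $(c2)$ and thus $P_0(\omega)=0$; so $P_0$ is concentrated on $\omega^+$. Conversely, concentration of $P_0$ on a single atom easily verifies $(c)$: events not containing $\omega^+$ have $\PPP(A)\le P_0(A)=0$ (case $(c1)$), while events containing $\omega^+$ together with atoms of zero $P_0$-mass fall in case $(c2)$. If $|\mathcal P^+|=0$, then $(c2)$ is ruled out, so any $A$ with $\PPP(A)>0$ must satisfy $(c3)$, i.e.\ $\neg A\in\PP$ and $A$ is a join of $n-1$ atoms; the alternative is $\PPP\equiv 0$ on $\mathcal A(\PP)\setminus\{\Omega\}$, i.e.\ the vacuous lower probability. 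These three situations are precisely the bullet points in $(d)$.

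The argument is essentially an unpacking of the discussion preceding the proposition in the excerpt, so the principal difficulty is not mathematical depth but careful bookkeeping: one must verify both implications of the equivalence in each subcase (especially the sufficiency of $P_0$ being concentrated on a single atom) and check that items $(c)$ and $(d)$ jointly exhaust the possibilities created by Proposition \ref{characterisation_extended}$(c)$.
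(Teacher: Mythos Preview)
Your proposal is correct and follows exactly the approach the paper takes in the discussion between Propositions \ref{characterisation_extended} and \ref{prop_characterise_intervals}. One tiny slip: when $n=3$ and $A$ is a single atom with $\PPP(A)>0$, it is $(c2)$ with $k=0$ (not $(c3)$, since $\neg A$ then has two atoms) that applies, but this does not affect your conclusion.
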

Turning to the other NL models, there is little to say about RRMs: they are extended probability intervals iff $|\PP|=2$. As for HBMs, when they are coherent (which, as seen in Section \ref{coherent_uplow_HB}, is not the rule), they are also extended probability intervals:

\begin{Prop}
\label{prop_HBM_intervals}
If in a HBM, $\PPP$ and $\overline P$ are coherent on $\mathcal A(\PP)$, $|\PP|=n$, then $(\PPP,\overline P)$ is the extended probability interval of $\II=(\PPP|_{\PP},\overline P|_{\PP})$.
\end{Prop}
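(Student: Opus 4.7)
My plan is to verify the identity $\overline P(A) = u(A)$ for every $A \in \mathcal A(\PP)$, where $u$ is the extension formula \eqref{extended_upper} applied to $\II = (\PPP|_{\PP}, \overline P|_{\PP})$; the companion identity $\PPP = l$ then follows automatically by conjugacy of the two pairs $(\PPP,\overline P)$ and $(l,u)$. I would stratify $\mathcal A(\PP)$ into the three classes $\mathcal N_{\overline P}$, $\mathcal E_{\overline P}$, $\mathcal U_{\overline P}$ and treat each separately, drawing on the atomic decomposition in Proposition \ref{Psup_sub_implies}, the uniqueness statement of Corollary \ref{unico_e}, the subadditivity of $\overline P$ (equivalent to coherence by Proposition \ref{characterise_coherence}), and the superadditivity of $\PPP$ from Proposition \ref{Pinf_HB_super}.

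First, if $A \in \mathcal N_{\overline P}$, Proposition \ref{Psup_sub_implies}(a) decomposes $A$ into null atoms, forcing $\sum_{\omega_i \Rightarrow A} \overline P(\omega_i) = 0$; this collapses the minimum in \eqref{extended_upper} and yields $u(A) = 0 = \overline P(A)$. Next, if $A \in \mathcal E_{\overline P}$, Proposition \ref{Psup_sub_implies}(b) writes $A = \omega^+ \vee \bigvee_j \omega_j$ with $\omega^+$ essential and the $\omega_j$ null, so the first argument of the minimum equals $\overline P(\omega^+)$, which coincides with $\overline P(A)$ by Corollary \ref{unico_e}. Superadditivity of $\PPP$ applied to the disjoint atoms composing $\neg A$ then gives $\sum_{\omega_i \Rightarrow \neg A} \PPP(\omega_i) \le \PPP(\neg A) = 1 - \overline P(A)$, so the second argument of the minimum is at least $\overline P(A)$, and $u(A) = \overline P(A)$ follows.

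The case $A \in \mathcal U_{\overline P}$ is where I expect the subtlety, and is the only one that genuinely requires coherence. Applying subadditivity of $\overline P$ to the atomic decomposition $A = \bigvee_{\omega_i \Rightarrow A}\omega_i$ gives $1 = \overline P(A) \le \sum_{\omega_i \Rightarrow A} \overline P(\omega_i)$, so the first argument of the minimum is at least $1$. For the second argument, monotonicity of $\PPP$ forces $\PPP(\omega_i) \le \PPP(\neg A) = 1 - \overline P(A) = 0$ for each atom $\omega_i \Rightarrow \neg A$, so the sum vanishes and the second argument equals exactly $1$. Hence $u(A) = 1 = \overline P(A)$, completing the verification on all three strata and therefore the proof.
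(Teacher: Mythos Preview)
Your proof is correct and follows essentially the same three-case strategy as the paper, invoking Proposition~\ref{Psup_sub_implies} and Corollary~\ref{unico_e} for the same purposes. The only minor difference is that where the paper appeals to the general natural-extension inequality \eqref{N2} to get $u(A)\ge\overline P(A)$ for free, you instead bound the second argument of the minimum in \eqref{extended_upper} directly, via superadditivity of $\PPP$ (Proposition~\ref{Pinf_HB_super}) in the essential case and monotonicity of $\PPP$ in the universal case; this is a slight variation in bookkeeping rather than a different route.
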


We conclude that NL models are extended probability intervals in very special instances only. 

\section{Conclusions}
\label{sec_conclusions}

In this paper, we have introduced Nearly-Linear models, investigating the consistency properties of their subfamilies and the beliefs they can represent. We have shown that they may range from coherent models (even precise probabilities, in very special instances) to 2-coherent ones, and that they can represent more or less rational ways of assessing uncertainty evaluations. The least rational beliefs elicit ways of thinking of possibly inexperienced or irrational assessors, but are not uncommon in practice. Similar attitudes have been observed also in other areas, like Decision Theory and Behavioural Economics, although models developed in these realms for these purposes only partly overlap with NL models, as we have noticed. It is therefore important to clarify which are the connections between these convictions and the degree of  consistency of the model representing them. In the whole, we find a surprising variety of situations. At one end, the VBM  is a cautious generalisation of the PMM; it is a sound model, as it is coherent and corrects a possible practical shortcoming of the PMM, its neglecting fixed costs in the making of a selling price of an event. At the other end, the HBM and the RRM both convey a subject's conflicting attitudes. These are especially patent with tail events: the $\PPP,\overline P$ evaluations of high (low) $P_0$ probability events are even higher (lower) in the HBM, while they move in the opposite way in the RRM, flattening the $\PPP,\overline P$ values of all non-trivial events in a range narrower than 1. This latter behaviour is less consistent than the former: RRMs are 2-coherent, but not coherent (unless $|\PP|=2$), whilst a HBM may be coherent in some specific circumstances. Both a HBM and a RRM should therefore be regarded as simple ways of explaining certain behaviours, rather than ideal models for assessing uncertainty.

The major open question regarding NL models is how to make inferences, or conditioning, with them. In the case of the VBM,  we deem likely that the results obtained for the PMM in \cite{PVZ} can be generalised.  Extending the remaining 2-coherent models may be more complex, even though some results for the 2-coherent natural extension in \cite{PV16} could be applied.

\section*{Acknowledgements}
We are grateful to Enrique Miranda for fruitful discussions and suggestions and to the referees for some helpful suggestions.

\section*{Appendix: Proofs}

\noindent \textbf{Proof of Lemma \ref{lemma_capacity}.}
	Recalling Definition \ref{def_capac}, %and \ref{NL_IP}, 
	only monotonicity has to be checked. Take for this $A,B\in \mathcal A(\PP)$, $A \Rightarrow B$, and let $\mu$ be defined by \eqref{def_mu_NL}. Hence $\mu(\cdot)\in [0,1]$, and, if $\mu(A)=0$, it is obviously $\mu(A)\le \mu(B)$. If $\mu(A)>0$, using \eqref{def_mu_NL}, $b>0$ and monotonicity of $P_0$, we obtain 
	$$
	\mu(A)=\min\{bP_0(A) + a,1\}\le \min\{bP_0(B) + a,1\}=\mu(B).
	$$
\begin{flushright}
	$\blacksquare$
\end{flushright}

\noindent \textbf{Proof of Proposition \ref{conjugate}.}
	$\mu^c(A)$ satisfies Definition \ref{NL_IP} for $A=\emptyset,A=\Omega$. For any $A\in\mathcal A(\PP)\setminus\{\emptyset, \Omega\}$, we have by \eqref{def_mu_NL}
	\begin{equation*}
	\begin{split}
	\mu^c(A) 
	& = 1- \mu(\neg A)=1-\min\{\max\{bP_0(\neg A) + a,0\},1\}\\
	& = 1 + \max\{-\max \{b(1-P_0(A)) + a,0\},-1\}\\
	& = 1 + \max\{\min \{bP_0(A) - (a+b),0\},-1\}\\
	& = \max\{\min \{bP_0(A) - (a+b),0\} + 1,0\}\\
	& = \max\{\min \{bP_0(A) +1 - (a+b),1\},0\}.
	\end{split}
	\end{equation*}
	From the second equality in \eqref{def_mu_NL} and the last expression, we derive that $\mu^c$ is NL$(1-(a+b),b)$, which is the thesis. 
	
\begin{flushright}
	$\blacksquare$
\end{flushright}

\noindent \textbf{Proof of Proposition \ref{prop_2coherence}.}
		Since $\mathcal A(\PP)$ is negation-invariant, we may apply Proposition \ref{prop_character_2coherence}. Taking account of Lemma \ref{lemma_capacity}, there remains to check that,
		\begin{equation}
		\label{equiv_2coherence}
		\forall A\in\mathcal A(\PP), \quad \PPP(A) + \PPP(\neg A)\le 1.
		\end{equation}
		We distinguish the following, exhaustive cases:
		\begin{itemize}
			\item[$(a)$] If $A\in \NNN_{\PPP}$, then $\PPP(A) + \PPP(\neg A)=\PPP(\neg A)\le 1$.
			\item[$(b)$] If $A\in \UUU_{\PPP}$, then $\neg A\in \NNN_{\PPP}$. This is trivially true if $A=\Omega$. Otherwise, $A\in\UUU_{\PPP}$ implies $bP_0(A) + a\ge 1$, equivalent to $b(1-P_0(\neg A)) + a\ge 1$ and to $bP_0(\neg A) +a\le b+2a -1 \le 0$, using \eqref{2coherence} at the last inequality. Hence $\neg A\in \mathcal N_{\PPP}$ and \eqref{equiv_2coherence} holds by $(a)$.
			\item[$(c)$] If $A\in\EEE_{\PPP}$, then $\neg A\notin \UUU_{\PPP}$ (by $(b)$: $\neg A \in \UUU_{\PPP}$ would imply $A\in\NNN_{\PPP}$). If $\neg A\in\NNN_{\PPP}$, we obtain case $(a)$; if $\neg A\in\EEE_{\PPP}$, 
			$$
			\PPP(A)+\PPP(\neg A)=bP_0(A) + a + bP_0(\neg A) + a =b + 2a\le 1.
			$$
		\end{itemize}

\begin{flushright}
	$\blacksquare$
\end{flushright}

\noindent \textbf{Proof of Proposition \ref{equiv_sotto=sopra}.}
	\begin{itemize}
		\item[$(a)$]
		If $b+2a=1$, then $c=1-(a+b)=a$. Hence, $\PPP$ and $\overline P$ are both NL$(a,b)$, so $\overline P=\PPP$, applying Definition \ref{NL_mod}. %Since $\overline P=\PPP$, \eqref{negazione} follows applying the conjugacy equation \eqref{conju}.
		\item[$(b)$] Consider any event $A\in\mathcal A(\PP)$. Since $\PPP=\overline P=P$, we obtain from \eqref{conju} 
		\begin{equation}
		\label{negazione}
		P(A)+P(\neg A)=1.
		\end{equation}
		It is then not possible that both $A$ and $\neg A$ belong to $\mathcal E_{P}$: otherwise, by \eqref{negazione}, we would get $bP_0(A) + a=P(A)=1-P(\neg A)=1-(bP_0(\neg A) + a)$, equivalent to $b+2a=1$, against the assumption. Hence, at least one between $A$ or $\neg A$ belongs to $\mathcal N_{P}\cup \mathcal U_{P}$, and its probability $P(A)$ is either 0 or 1. But then the probability of its negation is either 1 or 0, respectively, by \eqref{negazione}. 
		Thus $\mathcal E_P=\emptyset$.  
	\end{itemize}
	%For a counterexample where $\overline P=\PPP$ while $b+2a\neq 1$, see the later Example \ref{ex_HB_precise}
\begin{flushright}
	$\blacksquare$
\end{flushright}

\noindent \textbf{Proof of Proposition \ref{VBM_coherent}.}
	It is sufficient to prove the thesis for $\PPP$. For this, we prove first that $\PPP$ is 2-monotone, i.e., that Equation \eqref{2monot} holds for $\PPP$. Observe for this that:
	\begin{itemize}
		\item Equation \eqref{2monot} holds trivially either when at least one between events $A$ and $B$ is $\Omega$, or when $\PPP(A)\cdot\PPP(B)=0$ (implying by monotonicity of $\PPP$ that $\PPP(A\wedge B)=0$, $\PPP(A\vee B)\ge \max\{\PPP(A),\PPP(B)\}$).
		\item Suppose then $A\neq \Omega,B\neq \Omega,\PPP(A)\cdot\PPP(B)>0$. Equation \eqref{2monot} holds, because 
		%$$
		%\PPP(A\vee B) + \max\{bP_0(A\wedge B) + a,0\}\ge bP_0(A) + a+ bP_0(B) +a,
		%$$
		%which holds, since
		\begin{equation*}
		\begin{split}
		\PPP(A) + \PPP(B) 
		& =bP_0(A) + a+ bP_0(B) +a \\
		& =bP_0(A\vee B) + a + bP_0(A\wedge B) +a\\
		& \le \PPP(A\vee B) + \max\{bP_0(A\wedge B) + a,0\}\\
		& =\PPP(A\vee B) + \PPP(A\wedge B).
		\end{split}
		\end{equation*}
	\end{itemize}
	Hence, $\PPP$ is 2-monotone, and $\PPP(\emptyset)=0$, $\PPP(\Omega)=1$. Therefore $\PPP$ is coherent \cite[Corollary 6.16]{TdC}.
\begin{flushright}
	$\blacksquare$
\end{flushright}

\noindent \textbf{Proof of Proposition \ref{Psup_sub_implies}.}
	\emph{Proof of $(a)$.} ($\Rightarrow$) If $A=\emptyset$, the equality is true for $k=0$. Otherwise, $\displaystyle A=\bigvee_{\omega \Rightarrow A, \, \omega \in \PP} \omega$, and, for any such $\omega$ implying $A$, $\overline P(\omega)\le \overline P(A)=0$ by monotonicity of $\overline P$, hence $\overline P(\omega)=0$.
	
	($\Leftarrow$) Let  $\displaystyle A=\bigvee_{i=1}^k \omega_{j_i}$, with $\omega_{j_i}\in\mathcal N_{\overline P}$, $i=1,\dots,k$. Using subadditivity of $\overline P$, it is $\displaystyle \sum_{i=1}^k \overline P(\omega_{j_i})=0\ge \overline P(A)\ge 0$, thus $\overline P(A)=0$, i.e., $A\in\mathcal N_{\overline P}$.
	
	\emph{Proof of $(b)$.} ($\Leftarrow$) Let $\displaystyle A=\omega^+\vee \bigvee_{i=1}^k \omega_{j_i}$, with $\omega^+\in \PP\cap \mathcal E_{\overline P},\,\omega_{j_i}\in \mathcal N_{\overline P}$, %\, P_0(\omega_{j_i})=0$, 
	$i=1,\dots,k$. Let $k>0$ (otherwise the thesis is trivial). It holds that
	\begin{equation}
	\label{A_omega+}
	1>\overline P(\omega^+)=\overline P(\omega^+) + \sum_{i=1}^k \overline P(\omega_{j_i})\ge \overline P(A)\ge \overline P(\omega^+)>0,
	\end{equation}
	using subadditivity of $\overline P$ at the first weak inequality, its monotonicity at the second. Hence, $\overline P(A)=\overline P(\omega^+)\in\,]0,1[$, meaning that $A\in\mathcal E_{\overline P}$.
	
	($\Rightarrow$) Since $A\in\mathcal E_{\overline P}$, for any $\omega\in\PP$ implying $A$ it is
	$0\le \overline P(\omega)\le \overline P(A)<1$. Thus $\omega\in \mathcal N_{\overline P}\cup\mathcal E_{\overline P}$, and $A$ may be decomposed as
	\begin{equation}
	\label{decomposition}
	A=\bigvee_{\omega\Rightarrow A,\, \omega \in \mathcal N_{\overline P}} \omega \vee \bigvee_{\omega\Rightarrow A,\, \omega \in \mathcal E_{\overline P}}\omega.
	\end{equation}
	Applying subadditivity of $\overline P$,
	\begin{equation*}
	\begin{split}
	\sum_{\omega\Rightarrow A,\,\omega \in \mathcal N_{\overline P}} \overline P(\omega) 
	& + \sum_{\omega\Rightarrow A,\,\omega \in \mathcal E_{\overline P}} \overline P(\omega) \\
	& =\sum_{\omega\Rightarrow A,\,\omega \in \mathcal E_{\overline P}} \Big(bP_0(\omega) + c\Big)\ge \overline P(A)=bP_0(A) + c \\
	& = b\Big(\sum_{\omega\Rightarrow A,\,\omega \in \mathcal N_{\overline P}}P_0(\omega) + \sum_{\omega\Rightarrow A,\,\omega \in \mathcal E_{\overline P}} P_0(\omega)\Big) + c.
	\end{split}
	\end{equation*}
	Name $m$ the cardinality of the set $\{\omega \in \mathcal E_{\overline P}\cap \PP:\omega\Rightarrow A \}$. Then, comparing in the derivation above $\sum_{\omega\Rightarrow A,\,\omega \in \mathcal E_{\overline P}} (bP_0(\omega) + c)$ with the last row term, we obtain
	$$
	mc\ge b\sum_{\omega\Rightarrow A,\,\omega \in \mathcal N_{\overline P}} P_0(\omega) +c \ge c.
	$$
	Since $c<0$, it is either $m=1$, $P_0(\omega)=0$, $\forall \omega \in \mathcal N_{\overline P},$ $\omega\Rightarrow A$, or, alternatively, $m=0$. Assuming $m=0$, by \eqref{decomposition} $A=\bigvee_{\omega\Rightarrow A,\,\omega \in \mathcal N_{\overline P}} \omega$, and by $(a)$ of this proposition $A\in \mathcal N_{\overline P}$, a contradiction.
	Therefore $m=1$, $P_0(\omega)=0$, $\forall \omega \in \mathcal N_{\overline P},$ $\omega\Rightarrow A$, which is the thesis.
	
	\emph{Proof of $(c)$.} Two alternatives may occur:
	\begin{itemize}
		\item[$i)$] $\PP \cap \mathcal U_{\overline P}\neq \emptyset$. Then there is $\omega^*\in\mathcal U_{\overline P}$ such that $bP_0(\omega^*) + c\ge 1$, and \eqref{+*} holds, since $bP_0(\omega^+) + c>0$;
		\item[$ii)$] $\PP \cap \mathcal U_{\overline P} = \emptyset$. 
		Note that, using subadditivity of $\overline P$,
		% \begin{equation*}
		% \begin{split}
		% 1 
		% & =\overline P(\Omega)=\overline P\Big(\bigvee_{\omega\in\mathcal N_{\overline P}} \omega \vee \bigvee_{\omega\in\mathcal E_{\overline P}} \omega %\vee \bigvee_{\omega\in\mathcal U_{\overline P}} \omega
		% \Big)\\
		% & \le \sum_{\omega\in\mathcal N_{\overline P}} \overline P(\omega) + \sum_{\omega\in\mathcal E_{\overline P}} \overline P(\omega) \\%+ \sum_{\omega\in\mathcal U_{\overline P}} \overline P(\omega) \\
		% & \le \sum_{\omega\in\mathcal E_{\overline P}} \Big(bP_0(\omega) + c\Big), %+ \sum_{\omega\in\mathcal U_{\overline P}} \Big(bP_0(\omega) + c\Big),
		% \end{split}
		% \end{equation*}
		\begin{multline}
		\label{ge1}
		1 =\overline P(\Omega)=\overline P\Big(\bigvee_{\omega\in\mathcal N_{\overline P}} \omega \vee \bigvee_{\omega\in\mathcal E_{\overline P}} \omega %\vee \bigvee_{\omega\in\mathcal U_{\overline P}} \omega
		\Big)\\
		\le \sum_{\omega\in\mathcal N_{\overline P}} \overline P(\omega) + \sum_{\omega\in\mathcal E_{\overline P}} \overline P(\omega) %+ \sum_{\omega\in\mathcal U_{\overline P}} \overline P(\omega) \\
		= \sum_{\omega\in\mathcal E_{\overline P}} \Big(bP_0(\omega) + c\Big). %+ \sum_{\omega\in\mathcal U_{\overline P}} \Big(bP_0(\omega) + c\Big),
		\end{multline}
		% so that 
		% \begin{equation}
		% \label{ge1}
		% \sum_{\omega\in\mathcal E_{\overline P}%\cup \mathcal U_{\overline P}} 
		% \Big(bP_0(\omega) + c\Big) \ge 1.
		% \end{equation}
		Then, by \eqref{ge1}, since $bP_0(\omega^+)+c<1,$ there exists $\omega^*\in\PP\cap\mathcal E_{\overline P}$, $\omega^*\neq \omega^+$.
		Since $\omega^+\vee \omega^*\notin\mathcal N_{\overline P}$ (by $(a)$) and $\omega^+\vee \omega^*\notin \mathcal E_{\overline P}$ (by $(b)$), necessarily $\omega^+\vee\omega^*\in\mathcal U_{\overline P}$, which implies \eqref{+*}, together with subadditivity:
		$$
		bP_0(\omega^+)+ c + bP_0(\omega^*) + c =\overline P(\omega^+) + \overline P(\omega^*)\ge \overline P(\omega^+\vee \omega^*)= 1.
		$$
	\end{itemize}

\begin{flushright}
	$\blacksquare$
\end{flushright}

\noindent \textbf{Proof of Corollary \ref{unico_e}.}
	Event $A$ can be decomposed as in Proposition \ref{Psup_sub_implies} $(b)$. Assuming that decomposition, it is derived from Equation \eqref{A_omega+} in the proof of Proposition \ref{Psup_sub_implies} $(b)$ that $\overline P(A)=\overline P(\omega^+)$.
\begin{flushright}
	$\blacksquare$
\end{flushright}

\noindent \textbf{Proof of Proposition \ref{cond_sub_HB}.}
	Take any two distinct $\omega_i,\omega_j\in\mathcal E_{\overline P}$. By Proposition \ref{Psup_sub_implies}, % $(b)$, 
	$\overline P(\omega_i\vee \omega_j)=1$, implying that 
	$$
	b\big(P_0(\omega_i) + P_0(\omega_j)\big) + c\ge 1.
	$$
	It ensues that ($c<0$):
	$
	P_0(\omega_i) + P_0(\omega_j) \ge \frac{1-c}{b}>\frac{1}{b},
	$
	or also ($P_0(\omega_i)P_0(\omega_j)>0$ from \eqref{upper_HBM})
	\begin{equation}
	\label{bound1_b}
	b>\frac{1}{P_0(\omega_i) + P_0(\omega_j)},
	\end{equation}
	which justifies the internal maximum in Equation \eqref{bound_b}. 
	
	To complete the proof, since $|\PP|\ge 2m$ and assuming for notational simplicity that the first $2m$ elements of $\PP\cap \EEE_{\overline P}$ are $\omega_1,\dots, \omega_{2m}$, %$\omega_i\in\PP\cap\mathcal E_{\overline P}$, $i=1,\dots,2m$, 
	we have
	$$
	1=P_0(\Omega)\ge \sum_{i=1}^m \Big(P_0(\omega_{2i-1}) + P_0(\omega_{2i})\Big) >\frac{m}{b},
	$$
	applying the reciprocal of \eqref{bound1_b} at the strict inequality. The inequality $b>m$ follows.
\begin{flushright}
	$\blacksquare$
\end{flushright}

\noindent \textbf{Proof of Proposition \ref{characterise_coherence}.}
We distinguish two cases:
\begin{itemize}
	\item[1)] Let $\PP$ be finite.
	
	If $\overline P$ is coherent, it is necessarily subadditive (cf. \eqref{subadd}). %\cite[Section 2.7.4]{W}.
	
	Conversely, suppose that $\overline P$ is subadditive. By Theorem \ref{env_thm} $(b)$, $\overline P$ is coherent if we prove that
	$$
	\forall A\in\mathcal A(\PP), \quad \exists P_A:P_A(A)=\overline P(A), \quad P_A(B)\le \overline P(B), \quad \forall B\in\mathcal A(\PP),
	$$
	with $P_A:\mathcal A(\PP)\to\RR$ precise probability. We distinguish three exhaustive alternatives. Prior to this, define  the function
	$$
	\delta(\omega \Rightarrow B)=
	\begin{cases}
	1 & \text{if }\omega \Rightarrow B\\
	0 & \text{if }\omega \not\Rightarrow B
	\end{cases}
	$$
	\begin{itemize}
		\item[$(a)$] $A\in \mathcal E_{\overline P}$.
		
		Define $P_A(\omega)$ on $\PP$ as follows:
		$$
		P_A(\omega)=
		\begin{cases}
		bP_0(\omega^+) + c & \text{if }\omega=\omega^+\\
		1-P_A(\omega^+) & \text{if }\omega=\omega^*\\
		0 & \text{otherwise}
		\end{cases}
		$$
		where $\omega^*,\omega^+$ are the atoms in Proposition \ref{Psup_sub_implies} $(b)$ and $(c)$. Then extend $P_A$ on $\mathcal A(\PP)$ putting, $\forall B\in \mathcal A(\PP),$
		\begin{equation}
		\label{PA}
		P_A(B)=\delta(\omega^+\Rightarrow B)P_A(\omega^+) + \delta (\omega^*\Rightarrow B)P_A(\omega^*).
		\end{equation}
		Clearly, $P_A$ is a probability on $\mathcal A(\PP)$. Let us see that, $\forall B\in \mathcal A(\PP),$ $P_A(B)\le \overline P(B)$:
		\begin{itemize}
			\item[$i)$] if $B\in\mathcal U_{\overline P}$, then $P_A(B)\le 1=\overline P(B)$;
			\item[$ii)$] if $B\in\mathcal N_{\overline P}$, then $\omega^+\not\Rightarrow B$, $\omega^*\not\Rightarrow B$, by Proposition \ref{Psup_sub_implies} $(a)$. From Equation \eqref{PA}, $P_A(B)=0=\overline P(B)$;
			\item[$iii)$] if $B\in\mathcal E_{\overline P}$, by Corollary \ref{unico_e} there is one and only one $\omega_B\in\PP \cap \mathcal E_{\overline P}$ such that $\overline P(B)=\overline P(\omega_B)$.
			
			If $\omega_B=\omega^+$, or  $\omega_B=\omega^*$, then by \eqref{PA} $P_A(B)=P_A(\omega^+)=\overline P(B)$, or $P_A(B)=P_A(\omega^*)=\overline P(B)$, respectively. 
			
			If $\omega_B\neq \omega^+$, $\omega_B\neq \omega^*$, by \eqref{PA} $P_A(B)=0<\overline P(B)$.
		\end{itemize}
		Hence $P_A(B)\le \overline P(B)$ in any case, and, when $B=A$, $P_A(A)=\overline P(A)$: $B=A$ is included into the subcase $\omega_B=\omega^+$ of $iii)$ above.
		
		\item[$(b)$] $A\in \mathcal U_{\overline P}$.
		
		By Proposition \ref{Psup_sub_implies}, two alternatives may occur:
		\begin{itemize}
			\item[$j)$] $\exists \omega^{(1)}\in\mathcal U_{\overline P}: \omega^{(1)}\Rightarrow A$.
			
			Define
			$$
			P_A(B)=\delta\big(\omega^{(1)}\Rightarrow B\big), \quad \forall B\in \mathcal A(\PP).
			$$
			Since $\omega^{(1)}\not\Rightarrow B$ if either $B\in\mathcal N_{\overline P}$ (Proposition \ref{Psup_sub_implies} $(a)$) or $B\in\mathcal E_{\overline P}$ (Proposition \ref{Psup_sub_implies} $(b)$), $P_A(B)=0\le \overline P(B)$ for $B\in \mathcal N_{\overline P} \cup \mathcal E_{\overline P}$. If $B\in \mathcal U_{\overline P}$, $P_A(B)\le 1=\overline P(B)$. Hence it is always $P_A\le \overline P$, and in particular $P_A(A)=1=\overline P(A)$.
			\item[$jj)$] $\exists \omega',\omega''\in\mathcal E_{\overline P}:\omega'\Rightarrow A,\omega''\Rightarrow A, \omega'\neq \omega''$.
			
			Define the following probability:
			\begin{align}
			\nonumber
			P_A(\omega) & =
			\begin{cases}
			bP_0(\omega') + c & \text{if }\omega=\omega'\\
			1-P_A(\omega') & \text{if }\omega=\omega''\\
			0 & \text{if }\omega\notin\{\omega',\omega''\}
			\end{cases}
			& 
			\\
			\label{PA2}
			P_A(B) & =\delta (\omega'\Rightarrow B)P_A(\omega') + \delta (\omega''\Rightarrow B)P_A(\omega''), &  \forall B\in \mathcal A(\PP).
			\end{align}
			To prove that $P_A(B)\le \overline P(B)$, $\forall B\in \mathcal A(\PP)$, note that if $B\in\mathcal U_{\overline P}$ or $B\in\mathcal N_{\overline P}$, the argument parallels that of $i),ii)$, respectively, item $(a)$.
			
			There remains the case $B\in\mathcal E_{\overline P}$, where, by Proposition \ref{Psup_sub_implies} $(b)$, $\displaystyle B=\omega^+ \vee \bigvee_{i=1}^k\omega_{j_i}$, $\omega^+ \in\PP\cap \mathcal E_{\overline P}$, $\omega_{j_i}\in \mathcal N_{\overline P}$, $P_0(\omega_{j_i})=0, \, i=1,\dots, k$. Three subcases may occur (use \eqref{PA2} to compute $P_A$):
			\begin{itemize}
				\item[$\bullet$] if $\omega^+\neq \omega', \,\omega^+\neq \omega'',$ then $P_A(B)=0<\overline P(B)$;
				\item[$\bullet$] if $\omega^+=\omega'$, then $P_A(B)=bP_0(\omega') + c=bP_0(\omega^+) + c=\overline P(B)$,
				with the last equality arising from Corollary \ref{unico_e};
				\item[$\bullet$] If $\omega^+=\omega''$, by Proposition \ref{Psup_sub_implies} $(b)$ $\omega'\vee\omega''\in\mathcal U_{\overline P}$. This fact and subadditivity imply that
				$$
				bP_0(\omega') + c + bP_0(\omega'') + c=\overline P(\omega') + \overline P(\omega'')\ge \overline P(\omega'\vee\omega'')=1.
				$$
				From the inequality above and Corollary \ref{unico_e}, 
				\begin{align*}
				P_A(B) 
				& =P_A(\omega'')=1-P_A(\omega')\\
				& =1-\big(bP_0(\omega') + c\big)\le bP_0(\omega'') + c=\overline P(B).
				\end{align*}
			\end{itemize}
			Finally, $P_A(A)=P_{A}(\omega')+ P_A(\omega'')=1=\overline P(A)$.
		\end{itemize}
		\item[$(c)$] $A\in \mathcal N_{\overline P}$.
		
		For any $A\in\mathcal U_{\overline P}\cup \mathcal E_{\overline P}\supset\{\Omega\}$, the corresponding $P_A$ obtained in $(a)$ and $(b)$ is equal to $0=\overline P(B)$, $\forall B\in\mathcal N_{\overline P}$. Thus, no further $P_A$ has to be added for $A\in \mathcal N_{\overline P}$: $\overline P$ is the upper envelope of the probabilities obtained in the previous two steps.  
\end{itemize}		
\item[2)] Let $\PP$ be infinite. 

To prove coherence of $\overline P$, we have to ensure that for the generic gain $\overline G$ in Definition \ref{def_upper_coherence} it holds that $\max \overline G\ge 0$.
	
	If we consider the \emph{finite} partition $\PP_{G}$ generated by $A_i$, $i=0,1,\dots,n$ 
	%(whose atoms are the non-impossible events $\displaystyle \bigwedge_{i=0}^n A_i'$, where $A_i'$ may be either $A_i$ or $\neg A_i$, $\forall i$), 
	(cf. Section \ref{describe_uncertain}),
	we note that the gamble $\overline G$ is defined on $\PP_{G}$, and that, for $i=0,1,\dots,n$, $A_i\in \mathcal A(\PP_{G})$, \emph{which is finite too}. Therefore $\overline G$ is also obtained from checking coherence of the restriction of $\overline P$ on $\mathcal A(\PP_{G})$, $\overline P|_{\mathcal A(\PP_{G})}$. But since $\overline P|_{\mathcal A(\PP_{G})}(A\vee B)\le \overline P|_{\mathcal A(\PP_{G})}(A) + \overline P|_{\mathcal A(\PP_{G})}(B)$, $\forall A,B\in \mathcal A(\PP_{G})$, $\overline P|_{\mathcal A(\PP_{G})}$ is coherent by Part 1), hence $\max \overline G\ge 0$. Since the argument applies to any generic gain concerning $\overline P$, $\overline P$ is coherent too. 
\end{itemize}
\begin{flushright}
	$\blacksquare$
\end{flushright}

\noindent \textbf{Proof of Proposition \ref{Pinf_HB_super}.}
	We prove \eqref{superadd} in all cases but the symmetric ones, obtained exchanging $A$ and $B$. Let then $A,B\in\mathcal A(\PP)$ with $A\wedge B=\emptyset$.
	
	\begin{itemize}
		\item If $A\in\mathcal N_{\PPP}$, \eqref{superadd} boils down to
		$\PPP(A\vee B)\ge \PPP(B),$ true by monotonicity of $\PPP$.
		\item If $A,B\in\mathcal E_{\PPP}$, $\PPP(A) + \PPP(B)=bP_0(A)+ bP_0(B) + 2a=bP_0(A\vee B) + 2a$. 
		Given this,
		\begin{itemize}
			\item[] if $A\vee B\in\mathcal E_{\PPP}$, $\PPP(A) + \PPP(B)=\PPP(A\vee B)+a<\PPP(A\vee B)$ (using \eqref{sign_ab});
			\item[] if $A\vee B\in\mathcal U_{\PPP}$, $\PPP(A) + \PPP(B)\le b+2a\le 1=\PPP(A\vee B)$ (using \eqref{strict_cond_HB}).
		\end{itemize}
		\item It cannot occur that $A\in\mathcal E_{\PPP}\cup \mathcal U_{\PPP}$, $B\in\mathcal U_{\PPP}$: since $A\Rightarrow \neg B$, this would imply 
		$1<\PPP(A) + 1 \le \PPP(\neg B) + \PPP(B)$, which conflicts with Proposition \ref{prop_character_2coherence} $(ii)$.
	\end{itemize}
\begin{flushright}
	$\blacksquare$
\end{flushright}

\noindent \textbf{Proof of Proposition \ref{HBM_coh_implies_alter}.}
	It suffices by conjugacy to prove that $\overline P$ is 2-alternating, checking \eqref{2altern}. Given $E\in\mathcal A(\PP)$, define
	$$
	\mathcal P^E=\{\omega\in\PP:\omega\Rightarrow E,\overline P(\omega)>0\}.
	$$
	Recall from Proposition \ref{Psup_sub_implies} $(b)$ and Corollary \ref{unico_e} that $\overline P(E)=1$ if $|\mathcal P^E|\ge 2$. Now let $\overline P$ be coherent, take any two $A,B\in\mathcal A(\PP)$, and consider $\mathcal P^{A\vee B}$. The following alternatives may occur:
	\begin{itemize}
		\item[$(a)$] $|\mathcal P^{A\vee B}|=0$. Then \eqref{2altern} trivially holds, in the form $0=0$.
		%\end{itemize}
		%\begin{itemize}
		\item[$(b)$] $|\mathcal P^{A\vee B}|\ge 1$ and $|\mathcal P^{A\wedge B}|=0$. Then $\overline P(A\wedge B)=0$, and \eqref{2altern} boils down to $\overline P(A\vee B)\le \overline P(A) + \overline P(B)$, true by subadditivity of $\overline P$. 
		\item[$(c)$] Either $|\mathcal P^{A\vee B}|=|\mathcal P^{A\wedge B}|=1$, or $|\mathcal P^{A\vee B}|\ge 2$ and at least 2 atoms in $\mathcal P^{A\vee B}$ imply $A\wedge B$.
		
		Then all the upper probabilities are identical in \eqref{2altern}, which holds with equality.
		
		\item[$(d)$] $|\mathcal P^{A\vee B}|\ge 2$ and exactly one $\omega \in \mathcal P^{A\vee B}$ implies $A\wedge B$. 
		
		Then $\overline P(A\vee B)=1$, and the following subcases arise:
		\begin{itemize}
			\item[-] All atoms in $\mathcal P^{A\vee B}\setminus\{\omega\}(\neq \emptyset)$ imply $A\wedge \neg B$. Then $\overline P(A)=1$ 
			and \eqref{2altern} reduces to $\overline P(B) - \overline P(A\wedge B)\ge 0$, true by monotonicity of $\overline P$.
			\item[-] All atoms in $\mathcal P^{A\vee B}\setminus\{\omega\}$ imply $\neg A\wedge B$: analogous argument as above.
			\item[-] In $\mathcal P^{A\vee B}\setminus\{\omega\}$, at least one atom implies $A\wedge \neg B$, and at least another one implies $\neg A \wedge B$. Then $\overline P(A)=1=\overline P(B)$, and \eqref{2altern} becomes the true inequality $\overline P(A\wedge B)\le 1$.
		\end{itemize}
	\end{itemize}
\begin{flushright}
	$\blacksquare$
\end{flushright}

\noindent \textbf{Proof of Proposition \ref{prop_N}.}
		Suppose first that $\PPP=\overline P=P$, $P$ probability ($P\neq P_0$). Two alternatives are possible:
		\begin{itemize}
			\item[$(i)$] $\exists \omega^+\in\PP:P(\omega^+)=1$.
			
			Then clearly, since $P$ is a probability, $P(\omega)=0$, $\forall \omega\in\PP\setminus\{\omega^+\}$.
			
			\item[$(ii)$] $\forall \omega\in\PP$, $P(\omega)<1$.
			
			Then $\mathcal E_P\cap \PP\neq\emptyset$ (since $\sum_{\omega\in\PP} P(\omega)=1$).
			
			Take $\omega^*\in\mathcal E_P\cap\PP$: from $\PPP(\omega^*)=bP_0(\omega^*) + a=\overline P(\omega^*)=bP_0(\omega^*) +c$ we immediately obtain $a=c$  ($c<0$, since $P\neq P_0)$.
			
			It is not possible that $|\mathcal E_P\cap \PP|\ge 3$: if, say, $\omega_1,\omega_2,\omega_3\in \mathcal E_P\cap \PP$, then $P(\omega_1\vee\omega_2)=1$, by Proposition \ref{Psup_sub_implies} $(b)$ (thinking of $P$ as a coherent, hence monotone and subadditive, upper probability). This implies $P(\omega_3)=0$, contradicting $\omega_3 \in\mathcal E_P$.
			
			Since $\PP$ is finite, it is also $|\mathcal E_P \cap \PP|\neq 1$. Thus $|\EEE_P\cap \PP|=2$.
		\end{itemize}
		
		Conversely, assume now that either $(a)$ or $(b)$ hold. 
		
		If $(a)$ holds, $\PPP=\overline P=P$ is trivially a probability.
		
		If $(b)$ holds, $a=c$ (that is, $b+2a=1$) ensures that $\PPP=\overline P=P$ (Proposition \ref{equiv_sotto=sopra} $(a)$). $P$ is a probability, by Lemma \ref{zero} (cf. also Remark \ref{rem_zero}).
\begin{flushright}
	$\blacksquare$
\end{flushright}

\noindent \textbf{Proof of Proposition \ref{N+1}.}
	If $\PPP=\overline P=P$ is a probability measure, then since $P$ is coherent as both a lower and an upper probability, $(a)$ and $(b)$ necessarily hold (cf. Equations \eqref{subadd}, \eqref{quasi_superadd}).

	Conversely, let $(a)$, $(b)$ hold. Then, $\forall S\in \mathcal A(\PP)$, it is
	$$
	1=\PPP(\Omega)=\PPP(S\vee\neg S)\le \PPP(S) + \PPP(\neg S)\le 1 + \PPP(S\wedge \neg S)=1,
	$$
	implying that $\PPP(S)=1-\PPP(\neg S)=\overline P(S)$. Thus $\PPP=\overline P=P$. Moreover, $\PPP,\overline P$ are coherent lower, respectively upper, probabilities, from Propositions \ref{characterise_coherence}, \ref{characterise_lower_coherence_gen}. Therefore, $P$ is a probability (Lemma \ref{zero} and Remark \ref{rem_zero}). 

\begin{flushright}
	$\blacksquare$
\end{flushright}

\noindent \textbf{Proof of Proposition \ref{consistency_RRM}.}
	It suffices to consider $\PPP$. $\PPP$ is 2-coherent by \eqref{ab_RRM} and Proposition \ref{prop_2coherence}. 
	
	$\PPP$ is not coherent if $|\PP|\ge 3$. In fact, then there  are distinct $\omega_1,\omega_2\in\PP$, such that $\omega_1\vee \omega_2\neq \Omega$. By \eqref{lower_RRM}, \eqref{ab_RRM}:
	\begin{multline*}
	\PPP(\omega_1)+\PPP(\omega_2)=b\big(P_0(\omega_1) + P_0(\omega_2)\big) +2a \\
	= bP_0(\omega_1\vee \omega_2) + 2a=\PPP(\omega_1\vee\omega_2)+ a >\PPP(\omega_1\vee\omega_2).
	\end{multline*}
	Hence $\PPP$ is incoherent, being not superadditive (cf. \eqref{superadd}).
	
	$\PPP$ is coherent, if $|\PP|=2$. In fact, then $\PP=\{\omega,\neg \omega\}$ and, by \eqref{lower_RRM}, \eqref{ab_RRM}:
	$$
	\PPP(\omega)+\PPP(\neg \omega)=b\big(P_0(\omega) + P_0(\neg \omega)\big) +2a=b+2a\le 1=\PPP(\Omega).
	$$
	From this inequality, $\PPP$ is coherent on $\mathcal A(\PP)$, being the lower envelope of $\{P_1,P_2\}$, with $P_1(\omega)=\PPP(\omega), \, P_1(\neg \omega)=1-\PPP(\omega)$, $P_2(\omega)=1-\PPP(\neg\omega), \, P_2(\neg \omega)\linebreak =\PPP(\neg \omega)$. 
\begin{flushright}
	$\blacksquare$
\end{flushright}

\noindent \textbf{Proof of Proposition \ref{coherence_implies_2monotonicity}.}
	Follows from: Proposition \ref{consistency_RRM}, recalling that any coherent $\PPP$ ($\overline P$) on $\mathcal A(\PP)$ is 2-monotone (2-alternating) if $|\PP|\le 3$ \cite[Proposition 6.9]{TdC}, Proposition \ref{VBM_coherent}, Proposition \ref{HBM_coh_implies_alter}.
\begin{flushright}
	$\blacksquare$
\end{flushright}

\noindent \textbf{Proof of Proposition \ref{prop_deg_mod}.}
	\begin{itemize}
		\item[$(a)$] It is simple to verify that $\PPP_h$ obeys the conditions $(i),(ii),(iii)$ in Proposition \ref{prop_character_2coherence}, hence it is 2-coherent.
		%This is the proof of Proposition \ref{prop_2coherence}, where only case $(c)$ with $\neg A\in\EEE_{\PPP}$ has to be considered because $\mathcal E_{\PPP_h}=\mathcal A(\PP)\setminus\{\emptyset, \Omega\}$, and it holds trivially.
		\item[$(b)$] We prove that Definition \ref{def_all} $(b)$ applies to $\PPP_h|_{\mathcal A(\PP)\setminus\{\emptyset, \Omega\}}$. Considering a generic $\underline G_{C}$, recalling that $\sum_{i=1}^ns_i=1$ there, 
		$$
		\underline G_{C}=\sum_{i=1}^n s_i\big(I_{A_i} - a\big) - \big(I_{A_0} - a\big)= \sum_{i=1}^n s_i \, I_{A_i} - I_{A_0},
		$$
		there is $\omega^*\in\PP$ such that $\omega^*\wedge A_0=\emptyset$ (since $A_0$ cannot be $\Omega$). It follows that 
		$$
		\max \underline G_C\ge \underline G_C(\omega^*)=\sum_{i=1}^n s_i \, I_{A_i}(\omega^*) - I_{A_0}(\omega^*)=\sum_{i=1}^n s_i \,  I_{A_i}(\omega^*)\ge 0.
		$$
		\item[$(c)$] We prove $(c)$ by means of the following chain of implications: if $\PP$ is finite, 
		$$
		\PPP_h \text{ is C-convex} \Rightarrow \PPP_h \text{ avoids sure loss} \Rightarrow a\le \tfrac{1}{n} \Rightarrow \PPP_h \text{ is C-convex.}
		$$
		In fact, the first implication is a property of C-convex probabilities \cite{PV03}.
		
		To prove the second implication, let $\PPP_h$ avoid sure loss. Applying Definition \ref{def_all} $(c)$, it must hold that $$
		\max \underline G_{\rm ASL}=\max \sum_{i=1}^n \frac{1}{n}(I_{\omega_i} - a)\ge 0.
		$$
		Since, $\forall \omega_i\in\PP$, $\underline G_{\rm ASL}(\omega_i)=\frac{1}{n}(1-a)+ \frac{n-1}{n}(-a)=\frac{1}{n} - a$, it is $\max \underline G_{\rm ASL}\ge 0$ iff $a\le \frac{1}{n}$.
		
		For the third implication, let  $a\le \frac{1}{n}$. We prove that $\PPP$ is convex applying Theorem \ref{env_thm} $(c)$, with $\mathcal M=\{P_0,P_1,\dots, P_n\}$ and $\alpha:\mathcal M\to\RR$ defined as follows: $\forall i,j=1,\dots,n,$
		\begin{align*}
		P_0(\omega_i) & = \frac{1}{n}, %\, \forall i=1,\dots,n, 
		& \alpha_0=\alpha(P_0)=0,\\
		P_i(\omega_j) & =
		\begin{cases}
		1 & \text{ if }\omega_j=\omega_i\\
		0 & \text{ if } \omega_j\neq \omega_i
		\end{cases},
		& \alpha_i=\alpha(P_i)=a.
		\end{align*}
		In fact, it holds that 
		\begin{equation}
		\label{appl_env}
		\PPP_h(A)=\min\{P_0(A),P_1(A)+\alpha_1,\dots,P_n(A) + \alpha_n\}, \quad \forall A\in\mathcal A(\PP).
		\end{equation}
		To see this, note that, for $i=1,\dots,n$ and for all $A\in\mathcal A(\PP)\setminus\{\emptyset, \Omega\}$, we have
		\begin{align*}
		P_i(A) +\alpha_i & = 1+a >a=\PPP_h(A) & \text{if }\omega_i\Rightarrow A,\\
		P_i(A) +\alpha_i & = a =\PPP_h(A) & \text{if }\omega_i\Rightarrow \neg A,
		\end{align*}
		while for $A=\emptyset$, $i=1,\dots,n$,
		$$
		P_i(\emptyset)+\alpha_i=a>0, \quad P_0(\emptyset)+\alpha_0=0=\PPP_h(\emptyset),
		$$
		and for $A=\Omega$, $i=1,\dots,n$,
		$$
		P_i(\Omega)+\alpha_i=1+a>1, \quad P_0(\Omega)+\alpha_0=1=\PPP_h(\Omega).
		$$
		Lastly, we have $\forall A\in\mathcal A(\PP)\setminus\{\emptyset, \Omega\}$, 
		$$
		P_0(A)\ge P_0(\bar \omega)=\frac{1}{n}\ge a=\PPP_h(A),
		$$
		where $\bar\omega\in\PP,\,\bar\omega\Rightarrow A$.
		
		Thus, any of $P_0,P_i+\alpha_i$, $i=1,\dots,n$, is not smaller than $\PPP_h$ at any event $A$, with equality achieved by $P_0$ if $A=\emptyset,\, A=\Omega$, by a convenient $P_i+\alpha_i$ (such that $\omega_i\Rightarrow \neg A$) otherwise. This means that \eqref{appl_env} holds, and that $\PPP_h$ is convex. Since $\PPP_h(\emptyset)=0$, it is also C-convex. 
	\end{itemize}
\begin{flushright}
	$\blacksquare$
\end{flushright}

\noindent \textbf{Proof of Proposition \ref{ideals_and_others}.}
	\begin{itemize}
		\item[$(a)$] The proof that $\UUU_{\PPP}$ is a filter is given in \cite[Section 2.9.8]{W} with reference to a 0-1 valued lower probability, but applies without modifications to a generic coherent $\PPP$. $\NNN_{\PPP}$ is generally not an ideal, since $\PPP(A)=\PPP(B)=0$ does not imply $\PPP(A\vee B)=0$.
		
		Equation \eqref{U_implies_N} holds, as a consequence of $\PPP(A)+\PPP(\neg A)\le 1$ and of non-negativity of $\PPP$, both necessary conditions for 2-coherence, hence for coherence.
		
		\item[$(b)$] It may be checked, by inspecting all admissible gains in Definition \ref{def_all} $(d)$, that the assignment $\PPP(A)=\PPP(B)=1$, $\PPP(A\wedge B)=1-\varepsilon$, is 2-coherent on $\mathcal D=\{A,B,A\wedge B\}\subset \mathcal A(\PP)$, $\forall \varepsilon\in [0,1]$. Whatever $\varepsilon$ is chosen, $\PPP$ admits a 2-coherent extension on $\mathcal A(\PP)$ \cite{PV16}. In particular this holds for $\varepsilon=1$, i.e., $\PPP(A\wedge B)=0$. Equation \eqref{U_implies_N} holds for the reasons elicited in $(a)$.
		
		\item[$(c)$] To show that $\PPP(A)=\PPP(B)=1$ implies $\PPP(A\wedge B)\ge \frac{1}{2}$ ($A\wedge B\neq \emptyset$), take $\PPP:\mathcal D\to\RR$ as in $(b)$ (in particular, $\PPP(A\wedge B)=1-\varepsilon$, $\varepsilon\ge 0$) and the related following gain $\underline G$, admissible by Definition \ref{def_all} $(b)$ (where $s_1=s_2=\frac{1}{2}$):
		$$
		\underline G=\tfrac{1}{2}\big(I_{A}-1\big)+ \tfrac{1}{2} \big(I_{B}-1\big)-\big(I_{A\wedge B}-(1-\varepsilon)\big)=\tfrac{1}{2}\big(I_A + I_B\big)-I_{A\wedge B}-\varepsilon.
		$$
		Since $\underline G(A\wedge B)=\underline G(\neg A\wedge \neg B)=-\varepsilon\le 0$, $\underline G(A\wedge \neg B)=\underline G(\neg A\wedge B)=\frac{1}{2}-\varepsilon$, when $\varepsilon>0$ it is $\max \underline G\ge 0$ iff $\varepsilon\le \frac{1}{2}$. 
		Thus, necessarily, $\PPP(A\wedge B)\ge \frac{1}{2}$.
		
		The bound $\PPP(A\wedge B)=\frac{1}{2}$ may be achieved, for instance by $\PPP$ in Table \ref{convex_P}. $\PPP$ is convex because it is the lower envelope of $P_1+\alpha_1$, $P_2+\alpha_2$, with $\alpha_1=\alpha(P_1)=0$, $\alpha_2=\alpha(P_2)=\frac{1}{2}$ (Theorem \ref{env_thm} $(c)$).
		\begin{table}[htbp!]
			\label{ex_precise}
			\begin{center}
				\begin{tabular}{c|c|c|c|c|c|c}
					&&&&&&\\[-1em] & $A\wedge B$ & $\neg A\wedge B$ & $A\wedge \neg B$ & $\neg A\wedge \neg B$ & $A$ & $B$ \\
					&&&&&&\\[-1em] \hline &&&&&&\\[-1em]
					$P_1=P_1+\alpha_1$ & 1 & 0 & 0 & 0 & 1 & 1 \\
					&&&&&&\\[-1em] \hline &&&&&&\\[-1em]
					$P_2$ & 0 & $\frac{1}{2}$ & $\frac{1}{2}$ & $0$ & $\frac{1}{2}$ & $\frac{1}{2}$ \\
					&&&&&&\\[-1em] \hline &&&&&&\\[-1em]
					$P_2+\alpha_2$ & $\frac{1}{2}$ & $1$ & $1$ & $\frac{1}{2}$ & $1$ & $1$ \\
					&&&&&&\\[-1em] \hline &&&&&&\\[-1em]
					\hline &&&&&&\\[-1em]
					$\PPP$ & $\frac{1}{2}$ & $0$ & $0$ & $0$ & $1$ & $1$
				\end{tabular}
				\caption{A convex lower probability with $\PPP(A)=\PPP(B)=1,$ $\PPP(A\wedge B)=\frac{1}{2}$.}
				\label{convex_P}
			\end{center}
		\end{table}
		
		Finally, \eqref{U_implies_N} may not hold. It does not, for instance, for $\PPP'=P_2+\alpha_2$ (which is convex, as a lower envelope of itself) in Table \ref{convex_P}: $\PPP'(A\wedge \neg B)=1$, but $\PPP'(\neg (A\wedge \neg B))%\textcolor{red}{=\PPP'(\neg A\vee B)}
		=1$. 
		
		Equation \eqref{U_implies_N} anyway applies, if $\PPP$ is C-convex. In fact, $\PPP$ then also avoids sure loss \cite[Proposition 3.5 $(e)$]{PV03}, and as such satisfies  $\PPP(A)+\PPP(\neg A)\le 1$, while $\PPP$ is non-negative by C-convexity.
	\end{itemize}

\begin{flushright}
	$\blacksquare$
\end{flushright}

\noindent \textbf{Proof of Proposition \ref{characterisation_extended}.}
	Obviously, $\II$ is coherent, being the restriction of the coherent $(\PPP,\overline P)$ on $\PP$. Therefore, its extended probability interval $\II_E$ is given by \eqref{extended_lower}, \eqref{extended_upper}, with
	$$
	l_i=\PPP(\omega_i), \quad u_i=\overline P(\omega_i), \quad i=1,\dots,n.
	$$
	Note that by conjugacy (of both $\PPP, \overline P$ and $l,u$) it is enough to establish that
	\begin{equation}
	\label{Pl}
	\PPP(A)=l(A), \quad \forall A\in \mathcal A(\PP)
	\end{equation}
	iff $(a), (b)$ or $(c)$ hold. 
	Equality \eqref{Pl} is certainly true if $A=\Omega$, by coherence of $\PPP,l$: $\PPP(\Omega)=l(\Omega)=1$, and, if $\PPP(A)=0$, using also \eqref{N}: $\PPP(A)=l(A)=0$.
	
	Therefore, it remains to check when 
	$$
	\PPP(A)=\max\bigg\{\sum_{\omega_i\Rightarrow A} l_i, 1- \sum_{\omega_i\Rightarrow \neg A} u_i\bigg\}
	$$
	for those events $A$ such that $\PPP(A)>0$, $A\neq \Omega$.
	
	Since by \eqref{N2} $l(A)\le \PPP(A)$, this is equivalent to check when $\displaystyle \PPP(A)=\sum_{\omega_i\Rightarrow A} l_i$ or $\displaystyle \PPP(A)=1- \sum_{\omega_i\Rightarrow \neg A} u_i$.
	Taking then one such $A$, we investigate first when $\displaystyle \PPP(A)=\sum_{\omega_i\Rightarrow A} l_i$. 
	
	Defining 
	\begin{align*}
	\nonumber
	\mathcal P_A^+ & =\{\omega_i\in\PP:\omega_i\Rightarrow A,\PPP(\omega_i)>0\}=\big\{\omega_i\in\PP:\omega_i\Rightarrow A,P_0(\omega_i)>-\tfrac{a}{b}\big\},\\
	%\label{PA0}
	\mathcal P_A^0 & =\{\omega_i\in\PP:\omega_i\Rightarrow A,\PPP(\omega_i)=0\}=\big\{\omega_i\in\PP:\omega_i\Rightarrow A,P_0(\omega_i)\le -\tfrac{a}{b}\big\},
	\end{align*}
	observe that, by \eqref{lower_VBM} and letting $|\mathcal P_A^+|=m$,
	\begin{equation}
	\begin{split}
	\label{let}
	\sum_{\omega_i\Rightarrow A}l_i 
	& = 
	\sum_{\omega_i\in \mathcal P_A^0} \PPP(\omega_i) + \sum_{\omega_i\in \mathcal P_A^+}\PPP(\omega_i)  \\
	& =\sum_{\omega_i\in\mathcal P_A^+} \Big(bP_0(\omega_i)+a\Big) 
	= ma + b \sum_{\omega_i\in\mathcal P_A^+} P_0(\omega_i).
	\end{split}
	\end{equation}
	Thus, recalling again \eqref{lower_VBM}, $\displaystyle\PPP(A)=\sum_{\omega_i\Rightarrow A} l_i$ if and only if
	\begin{equation}
	\label{Pl2}
	\PPP(A)=bP_0(A)+a= b\sum_{\omega_i\Rightarrow A}P_0(\omega_i) + a = ma + b \sum_{\omega_i\in\mathcal P_A^+} P_0(\omega_i)=\sum_{\omega_i\Rightarrow A} l_i.
	\end{equation}
	Note that \eqref{Pl2} is false when $m=0$: in fact, it is $\PPP(A)>0=\sum_{\omega_i\Rightarrow A} l_i$.
	
	Thus, the equality \eqref{Pl2} holds iff one of the following two cases applies:
	\begin{itemize}
		\item $a=0$ (case $(a)$).
		
		In fact, then $\PPP(\cdot)=b P_0(\cdot)$, which implies that $P_0(\omega_i)>0$ iff $\PPP(\omega_i)>0$. Using this fact at the second next equality and \eqref{let} at the third,
		$$
		\PPP(A)=b\sum_{\omega_i\Rightarrow  A, P_0(\omega_i)>0} P_0(\omega_i)=b\sum_{\omega_i\in\mathcal P_A^+} P_0(\omega_i)=\sum_{\omega_i \Rightarrow A} l_i.
		$$
		\item $a<0$, $\displaystyle A=\omega^+\vee \bigvee_{j=1}^k \omega_{i_j}$, $k\in \{0,\dots,n-1\}$, $\PPP(\omega^+)>0$, $P_0(\omega_{i_j})=0$, $j=1,\dots,k$ (case $(c2)$).
		
		To see this, note that it is $\displaystyle\sum_{\omega_i\in\mathcal P_A^+} P_0(\omega_i)\le \sum_{\omega_i\Rightarrow A} P_0(\omega_i)$, and that $ma<a$ for $m\ge 2$, since $a<0$. Thus, for $a<0$ we need to require $\displaystyle m=1, \, \sum_{\omega_i\in\mathcal P_A^+} P_0(\omega_i) = \sum_{\omega_i\Rightarrow A} P_0(\omega_i)$ for \eqref{Pl2} to hold. These conditions can be equivalently restated in the form of case $(c2)$.
	\end{itemize}
	To establish now when $\displaystyle \PPP(A)=1-\sum_{\omega_i\Rightarrow \neg A}u_i$, note first that $\PPP(A)>0$ iff $\overline P(\neg A)<1$, so that, since here $\PPP(A)>0$, we have by \eqref{upper_VBM} that
	\begin{equation}
	\label{overP}
	\overline P(\omega_i)=bP_0(\omega_i) + c<1, \quad \forall \omega_i\in\PP: \omega_i\Rightarrow \neg A. 
	\end{equation}
	Using \eqref{overP}, we may write
	\begin{equation}
	\label{1meno}
	1-\sum_{\omega_i \Rightarrow \neg A} u_i=1-\sum_{\omega_i \Rightarrow \neg A} \big(bP_0(\omega_i) + c\big) = 1- rc-b\sum_{\omega_i \Rightarrow \neg A}P_0(\omega_i),
	\end{equation}
	where $r=|\{\omega_i\in\PP: \omega_i\Rightarrow \neg A\}|$.
	
	On the other hand, still using \eqref{upper_VBM}, we obtain
	\begin{equation}
	\label{overP2}
	\PPP(A)=1-\overline P(\neg A)=1-c-bP_0(\neg A)=1-c-b\sum_{\omega_i\Rightarrow \neg A}P_0(\omega_i).
	\end{equation} 
	Comparing \eqref{1meno} and \eqref{overP2} (recall that $c\ge 0$), it is now easy to see that $\displaystyle\PPP(A)=1-\sum_{\omega_i \Rightarrow \neg A} u_i$ if and only if one of the following applies:
	\begin{itemize}
		\item $c=0$ (which is case $(b)$, $a+b=1$),
		\item $r=1$, equivalent to $\neg A\in \PP$ (case $(c3)$).
	\end{itemize}

\begin{flushright}
	$\blacksquare$
\end{flushright}

\noindent \textbf{Proof of Proposition \ref{prop_HBM_intervals}.}
	$\II$ is coherent for the same reasons as at the beginning of the proof of Proposition \ref{characterisation_extended}, and (again by conjugacy) it suffices to prove that
	$$
	\overline P(A)=u(A), \quad \forall A\in\mathcal A(\PP),
	$$
	with $u(A)$ given by \eqref{extended_upper}, $u_i=\overline P(\omega_i)$, $l_i=\PPP(\omega_i)$, $i=1,\dots, n$.
	
	The following exhaustive alternatives are to be considered:
	\begin{itemize}
		\item $\overline P(A)=1$. 
		
		From \eqref{N2} and coherence of $\overline P$, $u$, we get $\overline P(A)=u(A)=1$.
		
		\item $\overline P(A)=0$.
		
		If $A=\emptyset$, it is $\overline P(\emptyset)=u(\emptyset)=0$, by coherence.
		
		If $A\neq \emptyset$, also $\overline P(\omega_i)=0$, $\forall \omega_i\in\PP, \, \omega_i\Rightarrow A$. Therefore $\displaystyle\sum_{\omega_i\Rightarrow A}u_i=0$ and
		$$
		0\le u(A)=\min\bigg\{0,1-\sum_{\omega_i\Rightarrow \neg A} l_i\bigg\}=0=\overline P(A),
		$$
		thus $\overline P(A)=u(A)=0$.
		
		\item $0<\overline P(A)<1$.
		
		Exploiting \eqref{N2} at the first inequality, Corollary \ref{unico_e} at the first equality ($\omega^+$ is such that $\overline P(\omega^+)>0$, $\omega^+\Rightarrow A$) and Proposition \ref{Psup_sub_implies} $(b)$ at the second, we obtain:
		\begin{equation*}
		\begin{split}
		u(A) 
		& \ge \overline P(A)=\overline P(\omega^+)\\
		& =\sum_{\omega_i\Rightarrow A}\overline P(\omega_i)\ge \min\bigg\{\sum_{\omega_i\Rightarrow A}\overline P(\omega_i),1-\sum_{\omega_i\Rightarrow \neg A}l_i\bigg\}=u(A),
		\end{split}
		\end{equation*}
		which implies $\overline P(A)=u(A)$.
	\end{itemize}

\begin{flushright}
	$\blacksquare$
\end{flushright}

%%%%%%%%%%%%%%%%%%%%%%%%%%%%%%%%%%%%%%%%%%%%%%%%%%%%5
%We conclude from Proposition \ref{characterisation_extended} that a VBM is an extended probability interval in very special cases only. The VBM must be: a vacuous lower/upper probability ($a+b=0$), an $\varepsilon$-contamination model ($a=0$), a PMM ($a+b=1$, this case was already proven in \cite{MMD18}), or alternatively must be defined on a partition made up of 2 atoms only. \textcolor{red}{Resterebbe da includere il caso di una probabilita' precisa coerente ($a=0,b=1$), che non e' ammessa ne' dall'$\varepsilon$-contamination model, ne' dal PMM.}
% \begin{table}[h
% \centeringWith coherence, this may 
% \begin{tabular}{l l l}
% \hline
% \textbf{Treatments} & \textbf{Response 1} & \textbf{Response 2}\\
% \hline
% Treatment 1 & 0.0003262 & 0.562 \\
% Treatment 2 & 0.0015681 & 0.910 \\
% Treatment 3 & 0.0009271 & 0.296 \\
% \hline
% \end{tabular}
% \caption{Table caption}
% \end{table}

% \begin{figure}[h]
% \centering\includegraphics[width=0.4\linewidth]{placeholder}
% \caption{Figure caption}
% \end{figure}

%% The Appendices part is started with the command \appendix;
%% appendix sections are then done as normal sections
%% \appendix

%% \section{}
%% \label{}

%% References
%%
%% Following citation commands can be used in the body text:
%% Usage of \cite is as follows:
%%   \cite{key}          ==>>  [#]
%%   \cite[chap. 2]{key} ==>>  [#, chap. 2]
%%   \citet{key}         ==>>  Author [#]

%\section*{References}

%% References with bibTeX database:
%\bibliographystyle{elsarticle-num-names}
\bibliographystyle{apa}
%\section*{\refname}
\bibliography{nlum_CPV}{}

\begin{thebibliography}{}

\bibitem[\protect\astroncite{Augustin et~al.}{2014}]{book}
Augustin, T., Coolen, F. P.~A., De~Cooman, G., and Troffaes, M. C.~M. (2014).
\newblock {\em {Introduction to Imprecise Probabilities}}.
\newblock John Wiley and Sons, 2014 edition.

\bibitem[\protect\astroncite{Chateauneuf et~al.}{2007}]{CEG}
Chateauneuf, A., Eichberger, J., and Grant, S. (2007).
\newblock Choice under uncertainty with the best and worst in mind:
  Neo-additive capacities.
\newblock {\em J. Econ. Th.}, 137:538--567.

\bibitem[\protect\astroncite{Corsato et~al.}{2019}]{CPV18}
Corsato, C., Pelessoni, R., and Vicig, P. (2019).
\newblock {Generalising the Pari-Mutuel Model}.
\newblock {\em In: S. Destercke, T. Denoeux, M.A. Gil, P. Grzegorzewski, O.
  Hryniewicz, Uncertainty Modelling in Data Science, Soft Methods Prob. Stat.
  2018, AISC 832}, pages 216--223.

\bibitem[\protect\astroncite{De~Campos et~al.}{1994}]{CHM94}
De~Campos, L.~M., Huete, J.~F., and Moral, S. (1994).
\newblock Probability intervals: a tool for uncertainty reasoning.
\newblock {\em Internat. J. Uncertain. Fuzziness Knowledge-Based Systems},
  2:167--196.

\bibitem[\protect\astroncite{de~Finetti}{1974}]{deF74}
de~Finetti, B. (1974).
\newblock {\em {Theory of Probability}}.
\newblock Wiley.

\bibitem[\protect\astroncite{Denuit et~al.}{2005}]{DDGK05}
Denuit, M., Dhaene, J., Goovaerts, M., and Kaas, R. (2005).
\newblock {\em {Actuarial Theory for Dependent Risks}}.
\newblock Wiley.

\bibitem[\protect\astroncite{Eichberger et~al.}{2012}]{EGL12}
Eichberger, J., Grant, S., and Lefort, P. (2012).
\newblock Generalized neo-additive capacities and updating.
\newblock {\em Int. J. Econ. Theory}, 8:237--257.

\bibitem[\protect\astroncite{F\"ollmer and Schied}{2002}]{FS02}
F\"ollmer, H. and Schied, A. (2002).
\newblock {Convex Measures of Risk and Trading Constraints}.
\newblock {\em Financ. Stoc.}, 6:429--447.

\bibitem[\protect\astroncite{Girotto and Holzer}{2016}]{GH16}
Girotto, B. and Holzer, S. (2016).
\newblock A characterization of neo-additive measures.
\newblock {\em Fuzzy Sets Syst.}, 284:56--62.

\bibitem[\protect\astroncite{Grabisch}{2016}]{G16}
Grabisch, M. (2016).
\newblock {\em {Set Functions, Games and Capacities in Decision Making}}.
\newblock Springer Int. Publ. Switzerland.

\bibitem[\protect\astroncite{Huber}{1981}]{H81}
Huber, P.~J. (1981).
\newblock {\em {Robust Statistics}}.
\newblock Wiley, N.Y.

\bibitem[\protect\astroncite{Montes et~al.}{2019}]{MMD18}
Montes, I., Miranda, E., and Destercke, S. (2019).
\newblock {Pari-Mutuel probabilities as an uncertainty model}.
\newblock {\em Inf. Sci.}, 481:550--573.

\bibitem[\protect\astroncite{Montes et~al.}{2018}]{MMV18}
Montes, I., Miranda, E., and Vicig, P. (2018).
\newblock {2-Monotone outer approximations of coherent lower probabilities}.
\newblock {\em Int. J. Approx. Reason.}, 101:181--205.

\bibitem[\protect\astroncite{Pelessoni and Vicig}{2003}]{PV03}
Pelessoni, R. and Vicig, P. (2003).
\newblock {Convex Imprecise Previsions}.
\newblock {\em Rel. Comput.}, 9:465--485.

\bibitem[\protect\astroncite{Pelessoni and Vicig}{2016}]{PV16}
Pelessoni, R. and Vicig, P. (2016).
\newblock 2-coherent and 2-convex conditional lower previsions.
\newblock {\em Int. J. Approx. Reason.}, 77:66--86.

\bibitem[\protect\astroncite{Pelessoni et~al.}{2016}]{PVMM16}
Pelessoni, R., Vicig, P., Miranda, E., and Montes, I. (2016).
\newblock Bivariate $p$-boxes.
\newblock {\em Internat. J. Uncertain. Fuzziness Knowledge-Based Systems},
  24:229--263.

\bibitem[\protect\astroncite{Pelessoni et~al.}{2010}]{PVZ}
Pelessoni, R., Vicig, P., and Zaffalon, M. (2010).
\newblock Inference and risk measurement with the pari-mutuel model.
\newblock {\em Int. J. Approx. Reason.}, 51:1145--1158.

\bibitem[\protect\astroncite{Rieder}{1977}]{R}
Rieder, H. (1977).
\newblock Least favourable pairs for special capacities.
\newblock {\em Annals Stat.}, 5(5):909--921.

\bibitem[\protect\astroncite{Troffaes and de~Cooman}{2014}]{TdC}
Troffaes, M. C.~M. and de~Cooman, G. (2014).
\newblock {\em {Lower Previsions}}.
\newblock Wiley.

\bibitem[\protect\astroncite{Walley}{1991}]{W}
Walley, P. (1991).
\newblock {\em {Statistical Reasoning with Imprecise Probabilities}}.
\newblock Chapman \& Hall.

\end{thebibliography}
%\bibliography{sample.bib}
%% Authors are advised to submit their bibtex database files. They are
%% requested to list a bibtex style file in the manuscript if they do
%% not want to use model1-num-names.bst.

%% References without bibTeX database:

% \begin{thebibliography}{00}

%% \bibitem must have the following form:
%%   \bibitem{key}...
%%

% \bibitem{}

% \end{thebibliography}

\end{document}